\newtheorem{thm}{Theorem}[section]
\newtheorem{cor}[thm]{Corollary}
\newtheorem{lem}[thm]{Lemma}
\newtheorem{prop}[thm]{Proposition}
\newtheorem{rem}[thm]{Remark}
\newtheorem{defn}[thm]{Definition}
\newtheorem{con}[thm]{Conjecture}
\theoremstyle{definition}
\numberwithin{equation}{section}
\theoremstyle{remark} \hsize=7.5truein \vsize=8.6truein
\def\P{{\hbox{\bf P}}}
\def\E{{\hbox{\bf E}}}
\def\be{{\bf{e}}}
\def\be#1{ \begin{equation}\label{#1} }
\def\bas{\begin{align*}}
\def\eas{\end{align*}}
\def\bi{\begin{itemize}}
\def\ei{\end{itemize}}
\def\emph#1{{\it #1}}
\def\textbf#1{{\bf #1}}
\theoremstyle{plain}
  \newtheorem{question}[thm]{Question}
\theoremstyle{remark}
\theoremstyle{definition}
\begin{document}
\title{Sparse random graphs: Eigenvalues and Eigenvectors }

\author{Linh V. Tran, Van H. Vu\footnote{V. Vu is supported by NSF grants DMS-0901216 and AFOSAR-FA-9550-09-1-0167. } ~and Ke Wang\\
Department of Mathematics, Rutgers, Piscataway, NJ 08854}
%\email{wkelucky@math.rutgers.edu}
%\thanks{V. Vu is an A. Sloan Fellow and is supported by NSF Career Grant 0635606.}
\date{}
\maketitle

\begin{abstract}
In this paper we prove 
the semi-circular law for the eigenvalues of regular random graph $G_{n,d}$  in the case
 $d\rightarrow \infty$, complementing a  previous result of McKay for fixed $d$. 
 We also obtain a upper bound on  the infinity norm of  eigenvectors of Erd\H{o}s-R\'enyi random graph $G(n,p)$, answering a 
  question raised by Dekel-Lee-Linial.
\end{abstract}

\section{Introduction}

\subsection{Overview} 

In this paper, we consider two models of random graphs, the Erd\H{o}s-R\'{e}nyi random graph $G(n,p)$ and the random regular graph $G_{n,d}$. 
Given a real number $p=p(n)$,$0\le p\le 1$, the Erd\H{o}s-R\'{e}nyi graph on a vertex set of size $n$ is obtained by drawing an edge between each pair of vertices, randomly and independently, with probability $p$.  
On the other hand,  $G_{n,d}$, where $d=d(n)$ denotes the degree, is a random graph chosen uniformly from the set of 
 all simple $d$-regular graphs on $n$ vertices. These are basic models in the theory of random graphs. For further information, we  refer the readers 
  to  the excellent monographs $ \cite{bollobas} ,\cite{janson}$  and  
  survey $ \cite{MRRG}$.
  
 %However, for $d$ tending to infinity with $n$, it has been noticed that there is a connection between the Erd\H{o}s-R$\acute{e}$nyi model and the random %regular model. ...In this way, some properties of $G(n,p)$ can be transfered to the random regular graph and vice versa. 

Given a  graph $G$ on $n$ vertices, the adjacency matrix $A$ of $G$ is an $n\times n$ matrix whose entry $a_{ij}$ equals one if there 
is an edge between the vertices $i$ and $j$ and zero otherwise. All diagonal entries  $a_{ii}$ are defined to be  zero. 
  The eigenvalues and eigenvectors of $A$ carry valuable information about the structure of the graph
  and have been studied by many researchers for quite some time, with both theoretical and practical motivations (see, for example, $\cite{bauer},\cite{bhamidi},\cite{feige},\cite{semerjian}$ $\cite{FK1981},\cite{fried1991},\cite{fried2003}$, $\cite{fried1993},\cite{tvrandom},\cite{erdos2009semi}$, $\cite{shi2000},\cite{pothen1990}$).

The goal of this paper is to study the eigenvalues and eigenvectors of $G(n,p)$ and $G_{n,d}$. 
We are going to consider: 

\begin{itemize}

\item The global law  for the limit of the empirical spectral distribution (ESD) of adjacency matrices of $G(n,p)$ and $G_{n,d}$.
For $p = \omega (1/n)$, it is well-known that eigenvalues of $G(n,p)$ (after a proper scaling) follows Wigner's semicircle law 
(we include a short proof in the Appendix \ref{appendix:SCL} for completeness). 
Our main new result shows that the same law  holds  for random regular graph with $d \rightarrow \infty$ with $n$. 
This complements the well known result of McKay for the case when $d$ is an absolute constant (McKay's law) and extends recent results of 
Dumitriu and Pal \cite{SRRG} (see Section \ref{section:SCL} for more discussion).

%The corresponding result for $G(n,p)$ is well known 

%\item  Semicircle law on short scales. This refines the result above and gives information about the rate of convergence. 

\item Bound on the infinity norm of the eigenvectors. We first prove that the infinity norm of any (unit) eigenvector $v$ of $G(n,p)$ is almost surely $o(1)$ for $p=\omega(\log n/n)$. This gives a positive answer to a question raised by Dekel, Lee and Linial 
\cite{DLL}. Furthermore, we can show that $v$  
satisfies the bound $\|v\|_{\infty} = O\left(\sqrt { \log^{2.2} g(n){\log n}/{np} }  \right) $ for $p=\omega(\log n/n)=g(n)\log n/n$, as long as the corresponding eigenvalue 
is bounded away from the (normalized) extremal values $-2$ and $2$. 

\end{itemize}  

We finish this section with some notation and conventions. 

Given an $n\times n$ symmetric matrix $M$, we denote its $n$ eigenvalues as $${\lambda}_1 {(M)} \le {\lambda}_2 {(M)} \le \ldots \le {\lambda}_n {(M)},$$ and let $u_1(M),\ldots,u_n(M) \in \mathbb{R}^n$ be an orthonormal basis of eigenvectors of $M$ with $$M u_i(M)={\lambda}_i u_i(M).$$
%(For the reader's convenience, we mention that 
%recently  Tao and the second author proved that the eigenvalues of $G(n,p)$ are a.s. single, but this information  is not essential in our analysis here.) 

The empirical spectral distribution (ESD) of the matrix $M$ is a one-dimensional function $$F^{\bf M}_n(x)=\frac{1}{n} |\{ 1\le j \le n: \lambda_j(M) \le x\}|,$$

where we use $|\mathbf{I}|$ to denote the cardinality of a set $\mathbf{I}$.

Let $A_n$ be the adjacency matrix of $G(n,p)$. Thus $A_n$ is a random symmetric $n\times n$ matrix whose upper triangular entries are iid copies of a real random variable $\xi$ and diagonal entries are $0$. $\xi$ is a Bernoulli random variable that takes values $1$ with probability $p$ and $0$ with probability $1-p$. $$\mathbb{E} \xi=p, \mathbb{V}ar{\xi}=p(1-p)={\sigma}^2.$$ 
Usually it is more convenient to study the normalized matrix 
$$M_n=\frac{1}{\sigma}(A_n-pJ_n)$$
 
where $J_n$ is the $n\times n$ matrix all of whose  entries are 1. $M_n$ has entries with mean zero and variance one. The global properties of the eigenvalues of $A_n$ and $M_n$ are essentially the same (after proper scaling), thanks to the following lemma

\begin{lem} 
\emph{(Lemma 36, \cite{tvrandom})}
\label{EigenDiff}
Let $A,B$ be symmetric matrices of the same size where $B$ has rank one. Then for any interval $I$,
$$|N_I(A+B)-N_I(A)| \le 1,$$
where $N_I(M)$ is the number of eigenvalues of $M$ in $I$.
\end{lem}

\begin{defn}
Let $E$ be an event depending on $n$. Then $E$ holds with {\it  overwhelming probability}  if $\P(E)\geq 1- \exp(-\omega(\log n))$.
\end{defn}

The main advantage of this definition is that if we have a polynomial number of events, each of which holds with overwhelming probability, then their intersection also holds with overwhelming probability. 

Asymptotic notation is used under the assumption that $n \rightarrow \infty$. For functions $f$ and $g$ of parameter $n$, we use the following notation as $n\rightarrow \infty$: $f=O(g)$ if $|f|/|g|$ is bounded from above; $f=o(g)$ if $f/g \rightarrow 0$; $f=\omega(g)$ if $|f|/|g|\rightarrow \infty$, or equivalently, $g=o(f)$; $f=\Omega(g)$ if $g=O(f)$; $f=\Theta(g)$ if $f=O(g)$ and $g=O(f)$.\\

\subsection{The semicircle law} \label{section:SCL}

In 1950s, Wigner \cite{wigner} discovered the famous semi-circle for the limiting distribution of the eigenvalues of random matrices. 
His proof extends, without difficulty, to the adjacency matrix  of $G(n,p)$, given that $np \rightarrow \infty$ with $n$. (See Figure \ref{fig:SCL1} for a numerical simulation)

\begin{thm}
\label{thm:SCL1}
For $p=\omega(\frac{1}{n})$, the empirical spectral distribution (ESD) of the matrix $\frac{1}{\sqrt{n}\sigma} A_n$ converges in distribution to the semicircle distribution  which has a density ${{}\rho}_{sc}(x)$ with support on $[-2,2]$,  $${{\rho}}_{sc}(x) := \frac{1}{2 \pi } \sqrt{4 -x^2}.$$
\end{thm}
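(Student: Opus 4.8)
The plan is to prove Theorem~\ref{thm:SCL1} by the moment method, following Wigner's original argument and adapting it to the Bernoulli entries of the adjacency matrix. First I would pass from $A_n$ to the normalized matrix $M_n = \frac{1}{\sigma}(A_n - pJ_n)$, whose upper-triangular entries are independent, mean zero, variance one, and bounded in absolute value by $1/\sigma = O(1/\sqrt{p})$. Since $pJ_n$ has rank one, Lemma~\ref{EigenDiff} shows that the ESDs of $\frac{1}{\sqrt n}M_n$ and $\frac{1}{\sqrt n \sigma}A_n$ differ by at most $1/n$ in Kolmogorov distance, so it suffices to establish the semicircle law for $W_n := \frac{1}{\sqrt n}M_n$.

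Next I would compute the expected moments $\E\left[\frac{1}{n}\operatorname{tr} W_n^k\right] = \frac{1}{n^{1+k/2}} \sum \E[m_{i_1 i_2} m_{i_2 i_3} \cdots m_{i_k i_1}]$, where the sum runs over closed walks $i_1 \to i_2 \to \cdots \to i_k \to i_1$ on the vertex set. Because the entries are independent with mean zero, only walks in which every edge is traversed at least twice contribute; the standard combinatorial count shows that walks using more than $k/2$ distinct edges, or using an edge more than twice, are negligible after normalization, while the dominant contribution comes from the $C_{k/2}$ "double-tree" walks when $k$ is even (and zero when $k$ is odd). The one model-specific point is controlling the variance and higher cumulants of the Bernoulli entries: here $\E[m_{ij}^2] = 1$ exactly, and $\E[|m_{ij}|^\ell] \le (1/\sigma)^{\ell-2} = (p(1-p))^{-(\ell-2)/2}$, so for a walk with $t$ distinct edges used with multiplicities $\ell_1,\dots,\ell_t \ge 2$ summing to $k$, the expectation is bounded by $\prod (p(1-p))^{-(\ell_j-2)/2} = (p(1-p))^{-(k - 2t)/2}$; combined with the $O(n^{t+1})$ choices of such walks and the $n^{-1-k/2}$ normalization this gives a contribution of order $n^{t - k/2}(np(1-p))^{-(k/2 - t)} \to 0$ unless $t = k/2$, precisely because $np \to \infty$. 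Thus $\E\left[\frac{1}{n}\operatorname{tr} W_n^k\right] \to \int x^k \rho_{sc}(x)\,dx$, the $k$-th moment of the semicircle law.

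To upgrade convergence of expected moments to convergence in distribution of the ESD, I would show concentration: bound $\Var\left(\frac{1}{n}\operatorname{tr} W_n^k\right) = O(n^{-2}) \cdot (\text{stuff})$ by a similar walk count on pairs of closed walks that must share an edge, using $np\to\infty$ again, so that $\frac{1}{n}\operatorname{tr}W_n^k \to \int x^k\rho_{sc}$ in probability (and, with a Borel--Cantelli argument, almost surely along the relevant subsequences). Since the semicircle distribution is determined by its moments (its moments grow like the Catalan numbers, well within Carleman's condition), convergence of all moments in probability implies $F^{W_n}_n \to F_{sc}$ in distribution. The main obstacle is keeping the entrywise bound $1/\sigma$ under control in the moment estimates: unlike the classical Wigner case with $O(1)$ entries, here the higher moments blow up as $p \to 0$, and one must check at every step that the surviving "defect" in the walk count (edges used more than twice, or fewer than $k/2$ distinct edges) is suppressed by a genuine power of $np(1-p) \to \infty$ that beats the entry-size blow-up — this is exactly where the hypothesis $p = \omega(1/n)$ is used and is the only place the argument departs from Wigner's.
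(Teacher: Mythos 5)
Your proposal follows essentially the same route as the paper's Appendix~\ref{appendix:SCL}: reduce to $W_n=\frac{1}{\sqrt n}M_n$ via the rank-one perturbation Lemma~\ref{EigenDiff}, then run the moment method over closed walks, using the bound $\E|\eta|^{\ell}\le \sigma^{-(\ell-2)}$ so that walks with fewer than $k/2$ distinct edges are suppressed by powers of $np\to\infty$ and only the Catalan-counted double-tree walks survive. If anything, your sketch is slightly more complete than the paper's, since you also spell out the variance/concentration step and moment determinacy needed to pass from convergence of expected moments to convergence in distribution of the (random) ESD, which the appendix leaves implicit.
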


\begin{figure}[htbp]
  \centering 
  \includegraphics[scale=0.7]{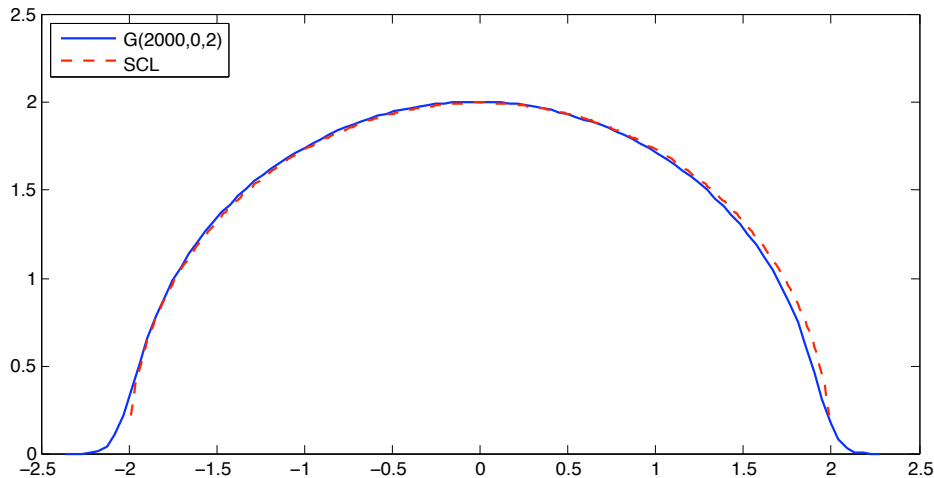}
  \captionstyle{center} 
  \onelinecaptionsfalse 
  \caption{The probability density function of the ESD of
           $G(2000,0.2)$ } 
\label{fig:SCL1}
 \end{figure}

If $np= O(1)$, the semicircle law no longer holds. In this case, the graph almost surely has $\Theta(n)$ isolated vertices, so 
in the limiting distribution, the point $0$ will have positive constant mass.

The case of  random regular graph, $G_{n,d}$, was 
  considered by McKay \cite{mckay} about 30 years ago. He proved that if $d$ is fixed, and $n \rightarrow \infty$, then the 
limiting density function is 

$$\displaystyle f_d(x)=\left\{ \begin{array}{ll}
         \frac{d\sqrt{4(d-1)-x^2}}{2\pi (d^2-x^2)}, & \mbox{if $|x| \leq 2\sqrt{d-1}$};\\
         \\
        0 & \mbox{otherwise}.\end{array} \right.$$ 

This is usually referred to as McKay or Kesten-McKay law. 

It is easy to verify that as $d\rightarrow \infty$, if we normalize the variable $x$ by $\sqrt{d-1}$, then the above density converges to the semicircle distribution on $[-2,2]$. 
In fact, a numerical simulation shows the convergence is quite fast(see Figure \ref{fig:SCL-RRG}).

\begin{figure}[htbp]
  \centering 
  \includegraphics[scale=0.6]{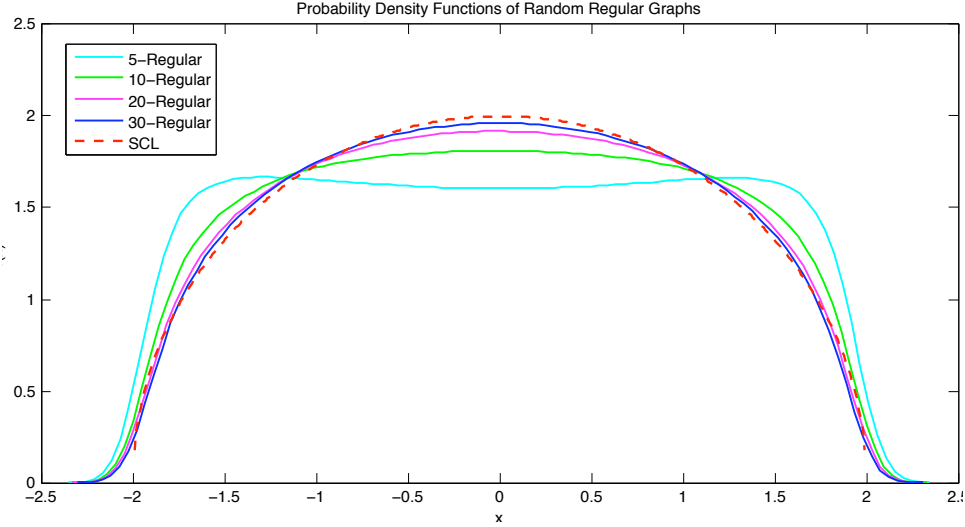}
  \captionstyle{center} 
  \onelinecaptionsfalse 
  \caption{The probability density function of the ESD of \protect\\ 
           Random $d$-regular graphs with 1000 vertices} 
\label{fig:SCL-RRG}
 \end{figure}

 It is  thus natural to conjecture that Theorem \ref{thm:SCL1} holds for $G_{n,d}$ with $d \rightarrow \infty$. 
 Let $A'_n$ be the adjacency matrix of $G_{n,d}$, and set
$$M'_n=\frac{1}{\sqrt{\frac{d}{n}(1- \frac{d}{n} ) }}(A'_n-\frac{d}{n}J).$$ 

 \begin{con} \label{conj:SCL2} 
If $d \rightarrow \infty$ then the ESD of $\frac{1}{\sqrt{n}}M'_n$ converges to the standard semicircle distribution.
 \end{con}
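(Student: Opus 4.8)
The plan is to prove the semicircle law for $G_{n,d}$ with $d\to\infty$ by the moment method, i.e.\ by showing that for each fixed $k$ the expected normalized trace $\frac1n\E\,\tr\bigl((\tfrac{1}{\sqrt n}M'_n)^k\bigr)$ converges to the $k$-th moment $m_k$ of the semicircle law ($m_{2\ell}=C_\ell$, the Catalan number, and $m_{2\ell+1}=0$), together with a concentration estimate showing the empirical moments do not deviate from their expectations. Writing $A'_n$ for the adjacency matrix of $G_{n,d}$, the centering and scaling reduce (via Lemma \ref{EigenDiff}, which controls the rank-one perturbation $-\tfrac{d}{n}J$, since a bounded number of eigenvalue shifts does not affect the limiting ESD) to computing $\frac1n\E\,\tr\bigl((\tfrac{1}{\sqrt{nd(1-d/n)}}A'_n)^k\bigr)$. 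The quantity $\tr((A'_n)^k)$ counts closed walks of length $k$ in the random $d$-regular graph, so everything comes down to estimating the expected number of closed walks of each length and combinatorial shape.

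The key steps, in order: (1) Expand $\E\,\tr((A'_n)^k)=\sum \E[a_{i_1i_2}a_{i_2i_3}\cdots a_{i_ki_1}]$ over closed walks $W=(i_1,\dots,i_k)$, and group walks by the isomorphism type of the multigraph they trace out, as in Wigner's original argument. (2) Use the known joint moment asymptotics for the entries of the adjacency matrix of $G_{n,d}$: for distinct pairs, $\E[a_{e_1}\cdots a_{e_s}] = (1+o(1))(d/n)^s$ when $d\to\infty$ and $s$ is fixed, and more generally the entries behave, to leading order, like independent $\mathrm{Bernoulli}(d/n)$ variables on the scale relevant here; this is precisely where the regularity constraint must be shown to be asymptotically negligible, and one can invoke the configuration-model / switching estimates of McKay--Wormald or the contiguity-type results of Dumitriu--Pal \cite{SRRG} to justify it. (3) Observe that, exactly as in the Wigner/Erd\H{o}s--R\'enyi case (Theorem \ref{thm:SCL1} and Appendix \ref{appendix:SCL}), the dominant contribution comes from walks that trace out a tree with $k/2$ edges, each traversed exactly twice (forcing $k$ even), of which there are $C_{k/2}\cdot n(n-1)\cdots(n-k/2) = (1+o(1))C_{k/2}n^{k/2+1}$; each such walk contributes $(1+o(1))(d/n)^{k/2}$, so the total is $(1+o(1))C_{k/2}n^{k/2+1}(d/n)^{k/2}$. (4) Divide by the normalization $(nd(1-d/n))^{k/2}\cdot n = (1+o(1))n^{k/2+1}d^{k/2}$ to get $C_{k/2}+o(1)$, matching the semicircle moment; check that all other walk shapes (with a repeated edge used $\ge 3$ times, or containing a cycle) contribute $o(1)$ after normalization, using $d\to\infty$ to kill the terms that were borderline for fixed $d$. (5) Finally, upgrade convergence in expectation to convergence in probability (in fact a.s.) by a variance bound: estimate $\E[(\tr(A'_n)^k)^2]-(\E\tr(A'_n)^k)^2$ by a similar walk-counting argument over pairs of closed walks, showing the pairs that "interact" are lower order, so the variance of $\frac1n\tr((\tfrac{1}{\sqrt n}M'_n)^k)$ is $o(1)$ (or $O(n^{-2})$, enough for Borel--Cantelli).

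The main obstacle is step (2): unlike in $G(n,p)$, the entries $a_{ij}$ of the regular-graph adjacency matrix are \emph{not} independent, and the degree constraints induce correlations. One must show these correlations are weak enough not to disturb the leading-order walk count — quantitatively, that for the relevant multigraphs arising from closed walks of bounded length, the joint edge probabilities factorize up to $(1+o(1))$ as $d\to\infty$ (with the right uniformity as $k$ grows slowly, if one wants more than fixed-$k$ moments). I expect to handle this either by passing to the configuration model and bounding the effect of conditioning on simplicity (the probability of simplicity is bounded below by a constant when $d=o(\sqrt n)$, and more delicate estimates handle larger $d$), or by directly importing the moment estimates from \cite{SRRG} and \cite{mckay}. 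A secondary technical point is making sure the range of $k$ and the error terms are controlled well enough that the concentration step of (5) genuinely closes the argument; this is routine once (2) is in place, following the template already used in Appendix \ref{appendix:SCL}.
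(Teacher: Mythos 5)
There is a genuine gap, and it sits exactly where the paper's new contribution lies. First, your reduction to the \emph{uncentered} matrix is not legitimate for a moment computation. Lemma \ref{EigenDiff} lets you ignore the rank-one term $\frac{d}{n}J$ at the level of the ESD (eigenvalue counts in intervals), but not at the level of traces: the normalization forced by $\frac{1}{\sqrt n}M'_n$ is $\sqrt{d(1-d/n)}$, so the uncentered matrix $\frac{1}{\sqrt{d(1-d/n)}}A'_n$ has a Perron eigenvalue of size $\sqrt{d/(1-d/n)}\to\infty$, and correspondingly the closed walks whose trace graph contains a cycle with singly-used edges contribute on the order of $d^{k/2}/n$ after normalization (your own count $n^k(d/n)^k$ for a $k$-cycle, divided by $n\,d^{k/2}$). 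This is $o(1)$ only when $d^{k/2}=o(n)$ for every fixed $k$, i.e.\ essentially $d=n^{o(1)}$ --- precisely the regime already covered by Dumitriu--Pal. (Your step (4) also divides by $n^{k/2+1}d^{k/2}$ instead of $n\,d^{k/2}$, which as written yields $C_{k/2}n^{-k/2}$, not $C_{k/2}$; but fixing that arithmetic is what exposes the cycle problem above.) So for large $d$ you must keep the centering, and then the expansion involves mixed moments of the \emph{correlated, centered} entries of $G_{n,d}$.

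That is your step (2), which you flag as the main obstacle but do not resolve: the factorization $\mathbb{E}[a_{e_1}\cdots a_{e_s}]=(1+o(1))(d/n)^s$ (and, worse, the cancellation needed for centered mixed moments) must hold uniformly for all $d\to\infty$ up to $d\le n/2$, while the tools you propose to invoke cover only restricted ranges: the configuration model / simplicity argument needs $d=o(\sqrt n)$, Dumitriu--Pal contiguity needs $d=n^{o(1)}$, and the McKay--Wormald enumeration results do not cover the whole intermediate range with the precision a trace expansion requires. The paper avoids this entirely and takes a different route: it proves a concentration inequality for $N_I$ in $G(n,p)$ with failure probability $O(\exp(-cn\sqrt{np}\log np))$ (Lemma \ref{thm:ESD-con-general} via Guionnet--Zeitouni concentration plus the G\"otze--Tikhomirov convergence rate, giving Corollary \ref{cor:ESDconvrate}), notes that $G(n,p)$ is $np$-regular with the much larger probability $\exp(-O(n\sqrt{np}))$ (Lemma \ref{lem:np-reg}), and concludes that the concentration survives conditioning on regularity, i.e.\ holds for $G_{n,d}$ (Theorem \ref{thm:ESD-regular}), which yields the conjecture for all $d\to\infty$. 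Unless you supply the missing joint-edge-probability estimates for dense $G_{n,d}$, your moment-method proposal establishes the statement only in the sparse regime where it was already known.
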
 

Nothing has been proved about this conjecture, until recently. In \cite{SRRG}, Dimitriu and Pal showed that the conjecture holds
for $d$ tending to infinity slowly, $d= n^{o(1)}$. Their method does not extend to  larger $d$. 
%In fact,  the authors believed that the conjecture fails for $d= n^c$, for any positive constant $c$. 

We are going to establish  Conjecture \ref{conj:SCL2} in full generality. Our method is very different from that of \cite{SRRG}. 

Without loss of generality we may assume $d\le n/2$, since the adjacency matrix of the complement graph of $G_{n,d}$ may be written as $J_n-A'_n$, thus by Lemma \ref{EigenDiff} will have the spectrum interlacing between the set $\{-\lambda_n(A'_n),\dots,-\lambda_1(A'_n)\}$. Since the semi-circular distribution is symmetric, the ESD of $G_{n,d}$ will converges to semi-circular law if and only if the ESD of its complement does.

\begin{thm}\label{thm:SCL-regular} If $d$ tends to infinity with $n$, then the empirical spectral distribution of $\frac{1}{\sqrt{n}}M'_n$ converges in distribution to the  semicircle distribution.
\end{thm}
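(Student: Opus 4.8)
The plan is to prove Theorem~\ref{thm:SCL-regular} by the method of moments, working directly with $G_{n,d}$; a direct comparison with $G(n,d/n)$ (whose semicircle law is Theorem~\ref{thm:SCL1}) is unavailable because $G_{n,d}$ is $G(n,nd/2)$ conditioned on the extremely unlikely event of being $d$-regular. Write $\sigma'=\sqrt{(d/n)(1-d/n)}$ and $B=A'_n-\frac dn J$. Since $A'_n\mathbf 1=d\mathbf 1$ and $J=\mathbf 1\mathbf 1^{T}$, the all-ones vector lies in $\ker B$ while $B$ agrees with $A'_n$ on $\mathbf 1^{\perp}$; hence $B=\sum_{i\ge 2}\lambda_i(A'_n)\,u_iu_i^{T}$ and, for every $m\ge 1$,
\[
\frac1n\,\mathrm{tr}\!\left(\Big(\tfrac{1}{\sqrt n}M'_n\Big)^{m}\right)
=\frac{\mathrm{tr}\big((A'_n)^{m}\big)-d^{m}}{n\,(n{\sigma'}^{2})^{m/2}},
\qquad n{\sigma'}^{2}=d\big(1-\tfrac dn\big)=\Theta(d).
\]
In particular the second empirical moment is identically $1$ (because $\mathrm{tr}((A'_n)^2)=nd$), which supplies tightness for free. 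Since the semicircle law has compact support it is determined by its moments $m_k$ ($=C_{k/2}$ for $k$ even, $0$ for $k$ odd), so it suffices to show, for each fixed $k$, that $\mathbb{E}\big[\frac1n\mathrm{tr}((\tfrac1{\sqrt n}M'_n)^k)\big]\rightarrow m_k$ and that the variance of this quantity tends to $0$; these two facts give convergence of the empirical moments in probability, hence convergence of the ESD to the semicircle law.

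For the expectation, expand $\mathrm{tr}(B^{k})=\sum(a'_{i_0i_1}-\tfrac dn)(a'_{i_1i_2}-\tfrac dn)\cdots(a'_{i_{k-1}i_0}-\tfrac dn)$ over closed index sequences of length $k$ and group the terms by the multigraph the sequence traces out; taking expectations, each group contributes a joint central moment $\mathbb{E}\big[\prod_{e}(a'_e-\tfrac dn)^{m_e}\big]$ of the edge indicators, $m_e$ being the multiplicity of $e$. Because $\mathbb{E}[a'_e]=\frac d{n-1}=\frac dn+O(d/n^{2})$ is essentially the centring constant and $\mathbb{E}[(a'_e-\tfrac dn)^{2}]={\sigma'}^{2}(1+o(1))$, the dominant groups are exactly the tree-like walks whose traced multigraph is a tree with each edge used precisely twice: for even $k$ these contribute $C_{k/2}$ (one per Dyck path of length $k$) and for odd $k$ there are none, matching $m_k$. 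Everything else — walks traversing an edge an odd number of times, walks traversing an edge at least four times, and walks whose trace contains a cycle — must be shown negligible. The quantitative input is a sharp estimate $\mathbb{P}(H\subseteq G_{n,d})=(1+o(1))(d/n)^{e(H)}\cdot(\text{an explicit correction in the degrees of }H)$ for fixed bounded graphs $H$, uniformly over $d\le n/2$, obtainable by switching arguments in the spirit of McKay. I expect this to be the main obstacle. For fixed $d$ there are no short cycles, so McKay's law falls out of pure tree-counting; but for $d\rightarrow\infty$ short cycles abound (for instance $\Theta(d^{3})$ triangles), and once $d$ is of order $n$ the naive leading terms of the various groups do not individually match the semicircle answer — they combine correctly only after the degree corrections are retained (already at $k=4$ the $4$-cycle contribution to $\mathrm{tr}((A'_n)^4)$, the subtracted $d^{4}$, and an $O(d^{3})$ correction must conspire to give $2nd^{2}(1-d/n)^{2}$). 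So one genuinely needs sharp, not merely leading-order, subgraph probabilities, precisely in the regime where the configuration-model shortcut is unavailable.

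For the variance I would argue directly, so as to stay uniform in $d\le n/2$: expand $\mathbb{E}[\mathrm{tr}(B^{k})^{2}]$ over pairs of closed walks; pairs with well-separated edge sets reproduce $(\mathbb{E}\,\mathrm{tr}(B^{k}))^{2}$ once the weak correlations $\mathrm{Cov}(a'_e,a'_f)=O(n^{-1+o(1)})$ are accounted for, while pairs sharing edges span strictly fewer free vertices and are of lower order, giving $\mathrm{Var}(\mathrm{tr}(B^{k}))=o(n^{2})$ and hence $\mathrm{Var}(\frac1n\mathrm{tr}(\cdots))\rightarrow 0$; this uses the same switching estimates, now for unions of two bounded walks. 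Alternatively one may invoke concentration of measure: a single edge-switch alters $A'_n$ by a symmetric matrix of rank at most $4$ and operator norm $O(1)$, hence — by Lemma~\ref{EigenDiff} applied a bounded number of times — changes the number of eigenvalues in any interval, and therefore $\frac1n\mathrm{tr}((\tfrac1{\sqrt n}M'_n)^k)$, by $O_k(1/n)$ on the overwhelming-probability event $\|\tfrac1{\sqrt n}M'_n\|_{\mathrm{op}}=O(1)$; feeding this switch-Lipschitz bound into a standard concentration inequality for functions of $G_{n,d}$ yields deviations of probability $\exp(-\omega(\log n))$. Combining the reduction above with the expected-moment computation and the variance bound proves the theorem; the earlier reduction to $d\le n/2$ via the complement is harmless since the semicircle law is symmetric.
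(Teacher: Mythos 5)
Your proposal has a genuine gap, and it is exactly the step you flag yourself. The moment-method route requires, as its central input, sharp joint central moment estimates $\E\bigl[\prod_e (a'_e-\tfrac dn)^{m_e}\bigr]$ (equivalently, subgraph probabilities for $G_{n,d}$ correct well beyond leading order), \emph{uniformly over all} $d\le n/2$. You write that this is ``obtainable by switching arguments in the spirit of McKay'' and ``the main obstacle,'' but you never supply it, and it is not a routine extension: McKay-type switching estimates of the required precision are classical only for $d$ growing slowly (roughly $d=o(n^{1/2})$ or less, which is essentially why Dumitriu--Pal stop at $d=n^{o(1)}$), whereas your own $k=4$ computation shows that in the dense regime the answer emerges only from a delicate cancellation among correction terms, so nothing short of these sharp uniform estimates will do. The same unproved input reappears in your variance bound, and your alternative concentration argument quietly assumes $\|\tfrac1{\sqrt n}M'_n\|_{\mathrm{op}}=O(1)$, i.e.\ $\lambda_{n-1}(A'_n)=O(\sqrt d)$ for all growing $d$, plus a ``standard'' concentration inequality under switchings on the space of $d$-regular graphs --- neither of which is standard uniformly in $d\le n/2$. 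So as written the proof does not close.

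Note also that your reason for abandoning the comparison with $G(n,p)$ is mistaken, and the paper's proof goes precisely through that door: by Lemma~\ref{lem:np-reg} the event that $G(n,p)$ is $np$-regular has probability at least $\exp(-O(n\sqrt{np}))$, while by Lemma~\ref{thm:ESD-con-general} (Guionnet--Zeitouni concentration, with $K=1/\sqrt p$) and Corollary~\ref{cor:ESDconvrate} the eigenvalue-counting estimate for $G(n,p)$ fails with probability at most $\exp(-cn\sqrt{np}\,\log(np))$, which is super-exponentially smaller. Conditioning on the ``extremely unlikely'' regularity event therefore costs only the ratio of these two probabilities, yielding Theorem~\ref{thm:ESD-regular} for $G_{n,d}$ with failure probability $O(\exp(-cn\sqrt d\log d))$, and Theorem~\ref{thm:SCL-regular} follows. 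This conditioning trick is exactly what lets the paper avoid the dense-regime enumeration that your approach would have to confront head-on.
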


Theorem \ref{thm:SCL-regular} is a direct consequence of the following stronger result, which shows convergence at small scales. For an interval $I$ let $N'_I$ be the number of eigenvalues of $M'_n$ in $I$.

\begin{thm}\label{thm:ESD-regular}(Concentration for ESD of $G_{n,d}$). Let $\delta>0$ and consider the model $G_{n,d}$. If $d$ tends to $\infty$ as $n\rightarrow \infty$  then for any interval $I\subset[-2,2]$ with length at least $\delta^{-4/5}d^{-1/10}\log^{1/5} d$, we have
$$|N'_I-n\int_I\rho_{sc}(x)dx|< \delta n\int_I\rho_{sc}(x)dx$$
with probability at least $1-O(\exp(-cn\sqrt{d}\log d))$.
\end{thm}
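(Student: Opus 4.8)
The plan is to pass to the Erd\H{o}s--R\'enyi model, where Theorem~\ref{thm:SCL1} is available, by conditioning. Put $p=d/n$. Any two $d$-regular graphs on $[n]$ have the same number of edges $nd/2$, hence the same $G(n,p)$-probability, so $G_{n,d}$ is exactly $G(n,p)$ conditioned on the event $\mathcal R$ that every vertex has degree $d$; moreover $\sigma^2=\tfrac dn(1-\tfrac dn)$ agrees in the two models, so $N'_I$ has the law of $N_I$ conditioned on $\mathcal R$, where $N_I$ denotes the number of eigenvalues of $\tfrac1{\sqrt n}M_n$ in $I$ for $G(n,p)$. Writing $\mathcal B=\{\,|N_I-n\int_I\rho_{sc}|\ge\delta n\int_I\rho_{sc}\,\}$,
\[\P_{G_{n,d}}(\mathcal B)=\P_{G(n,p)}(\mathcal B\mid\mathcal R)\le\P_{G(n,p)}(\mathcal B)\big/\P_{G(n,p)}(\mathcal R).\]
By the asymptotic enumeration of regular graphs (McKay--Wormald), $\log\bigl(1/\P_{G(n,p)}(\mathcal R)\bigr)=O(n\log d)$, which is $o(n\sqrt d\log d)$ since $d\to\infty$. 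Hence it suffices to prove, for $G(n,p)$ with $p=d/n$, that $\P(\mathcal B)\le\exp(-c'n\sqrt d\log d)$ for the same class of intervals.

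\textbf{Mean versus fluctuation.}
By Lemma~\ref{EigenDiff} the spectrum of $\tfrac1{\sqrt n}M_n$ and that of $B:=\tfrac1{\sqrt d}A_n$ (up to the scalar $\sqrt{1-p}$) differ in at most one eigenvalue, so I work with $B$, whose entries lie in $\{0,d^{-1/2}\}$. Split $|N_I-n\int_I\rho_{sc}|\le|N_I-\E N_I|+|\E N_I-n\int_I\rho_{sc}|$. The second, deterministic term is handled by a quantitative form of Theorem~\ref{thm:SCL1}: analyzing a self-consistent equation for the Stieltjes transform $m_n(z)=\tfrac1n\operatorname{tr}(B-z)^{-1}$ down to imaginary part $\eta\asymp\delta|I|$ (legitimate for sparse $G(n,p)$ as long as $\eta\gg1/d$, which the hypothesis on $|I|$ guarantees) yields $|\E N_I-n\int_I\rho_{sc}|\le\tfrac13\delta n\int_I\rho_{sc}$. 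It then remains to prove $\P\bigl(|N_I-\E N_I|>\tfrac23\delta n\int_I\rho_{sc}\bigr)\le\exp(-c'n\sqrt d\log d)$.

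\textbf{Concentration of the counting function --- the crux.}
This is where I expect the real difficulty. I would first linearize: choose a smooth $f$ with $f\equiv1$ on $I$, supported within distance $s\asymp\delta|I|$ of $I$, with $\|f'\|_\infty\lesssim s^{-1}$ and $\operatorname{TV}(f)=O(1)$; the difference $|N_I-\operatorname{tr}f(B)|$ is then at most the number of eigenvalues within $s$ of $\partial I$, which is $\ll\delta n\int_I\rho_{sc}$ with the required probability. To concentrate $\operatorname{tr}f(B)$ about its mean I would expose the rows of $A_n$ one at a time: adding a row is a rank-$\le2$ perturbation of $A_n$ (of operator norm $\approx\sqrt d$, hence of norm $\approx1$ after dividing by $\sqrt d$), so by Lemma~\ref{EigenDiff} each increment of $\operatorname{tr}f(B)$ is $O(\operatorname{TV}(f))=O(1)$ in the worst case. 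The point is that, because $G(n,p)$ is sparse ($\Theta(nd)$ edges) and has delocalized eigenvectors with overwhelming probability, the \emph{average} influence of a single edge on $\operatorname{tr}f(B)$ is of order $s^{-1}d^{-1/2}$ rather than $O(1)$ --- this comes from an $\ell^2$-bound on the relevant resolvent entries at spectral scale $s$ --- so the predictable quadratic variation of the martingale is far below the trivial bound. Feeding this variance estimate together with the $O(1)$ worst-case increment into a Freedman/Bernstein (or entropy-method) inequality gives a tail bound of the form $\exp\!\bigl(-c\,\delta^{O(1)}\,n\,|I|^{O(1)}\,d\,/\operatorname{polylog}(d)\bigr)$, and the stated threshold $|I|\ge\delta^{-4/5}d^{-1/10}\log^{1/5}d$ is precisely the point at which this exponent overtakes $n\sqrt d\log d$. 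Making the influence/variance bound sharp enough, and balancing the smoothing scale $s$ against both this variance and the deterministic error of the previous step, is the technical heart of the whole argument.

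\textbf{Spectral edges and outliers.}
The argument above is cleanest for intervals $I$ in the bulk, where $\int_I\rho_{sc}\asymp|I|$. For intervals abutting $\pm2$ (where $\int_I\rho_{sc}$ can be as small as $|I|^{3/2}$) and to account for the $O(1)$ eigenvalues of $\tfrac1{\sqrt n}M_n$ possibly lying outside $[-2,2]$, one needs the bound $\|A_n-pJ\|\le(2+o(1))\sqrt{np}$, equivalently $\lambda_2(A'_n)\le(2+o(1))\sqrt d$. For $G(n,p)$ this holds with overwhelming probability by the trace method ($\E\operatorname{tr}(A_n-pJ)^{2k}$ with $k\asymp\log n$), and transfers to $G_{n,d}$ through the same conditioning identity; with it in hand these remaining intervals are routine.
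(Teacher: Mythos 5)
Your outer reduction coincides with the paper's: realize $G_{n,d}$ as $G(n,p)$, $p=d/n$, conditioned on the regularity event $\mathcal R$, bound $\P(\mathcal B\mid\mathcal R)\le \P(\mathcal B)/\P(\mathcal R)$, lower-bound $\P(\mathcal R)$ by Shamir--Upfal/McKay--Wormald (the paper's Lemma \ref{lem:np-reg}), and reduce everything to showing $\P_{G(n,p)}(\mathcal B)\le\exp(-c'n\sqrt d\log d)$, which is exactly the paper's Corollary \ref{cor:ESDconvrate}.

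The gap is in how you propose to prove that $G(n,p)$ estimate: a row- or edge-exposure martingale with a Freedman/Bernstein (or entropy-method) bound cannot reach failure probabilities of order $\exp(-cn\sqrt d\log d)$. The worst-case increment of $\operatorname{tr}f(B)$ under exposing one edge or one row is genuinely of order $1$ (a rank-$\le 2$ perturbation can push an eigenvalue across the ramp of $f$, so the deterministic bound $O(\operatorname{TV}(f))=O(1)$ cannot be improved), and every inequality of Freedman/Bernstein type has exponent at most of order $T/M$ regardless of how small the predictable variance is. Here $T=\Theta(\delta n\int_I\rho_{sc})=O(\delta n|I|)=O(n)$ while the theorem requires exponent $n\sqrt d\log d=\omega(n)$ --- indeed even just beating $\P(\mathcal R)\approx\exp(-\Theta(n\log d))$ already requires $\omega(n)$. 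So the "crux" step fails at the probability scale that the whole comparison argument hinges on. A second manifestation of the same problem: your variance estimate and your edge/outlier control invoke eigenvector delocalization and trace-method norm bounds that hold only with overwhelming probability, i.e.\ with failure probability no better than roughly $\exp(-\mathrm{polylog}(n))$; such events can neither be absorbed into a bound at scale $\exp(-cn\sqrt d\log d)$ nor be transferred to $G_{n,d}$ through the conditioning identity, since their failure probability dwarfs $\P(\mathcal R)$. (The outlier discussion is also unnecessary, as the theorem only counts eigenvalues in $I\subset[-2,2]$.)

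The paper avoids all of this by using no martingale at all: it applies the Guionnet--Zeitouni/Talagrand concentration for convex Lipschitz spectral statistics (Lemma \ref{GZconcentration}) to trapezoidal approximations of $\chi_I$, where the relevant quantity is the $\ell^2$-Lipschitz constant $C/|I|$ times the entry bound $K=1/\sqrt p$, yielding exponents of order $\delta^{O(1)}n^2|I|^5p=\delta^{O(1)}nd|I|^5$, which at the threshold $|I|\ge\delta^{-4/5}d^{-1/10}\log^{1/5}d$ is precisely of order $n\sqrt d\log d$; the mean is then pinned down deterministically by the G\"otze--Tikhomirov rate $|\E N_I-n\int_I\rho_{sc}(t)dt|=O(n\sqrt{M_4/n})=O(n/\sqrt d)$, rather than by a self-consistent/local-law analysis. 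To repair your argument you would have to replace the martingale step by a concentration inequality of this convex-Lipschitz type, at which point you are essentially reconstructing the paper's Lemma \ref{thm:ESD-con-general}; your treatment of the expectation could in principle stay, but the martingale component as proposed is not salvageable.
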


\begin{rem}
Theorem \ref{thm:ESD-regular} implies that with probability $1-o(1)$, for $d=n^{\Theta(1)}$, the rank of $G_{n,d}$ is at least $n -n^{c}$ for some constant $ 0< c< 1$ (which can be computed 
explicitly from the lemmas). This  is  a partial result toward the conjecture by the second author that $G_{n,d}$ almost surely has full rank (see \cite{Vusur}). 
\end{rem}

%\subsection

\subsection{Infinity norm of the eigenvectors} 

Relatively little is known for eigenvectors in both random graph models under study. In \cite{DLL}, Dekel, Lee and Linial, motivated by the study of nodal domains, raised the 
 following question.
 
 \begin{question} \label{question:DLL}
Is it true that almost surely every eigenvector $u$ of $G(n, p)$ has $||u||_{\infty}=o(1)$? \end{question}

Later, in their journal paper \cite{DLL2}, the authors added one sharper question.

\begin{question} \label{question:DLL2} 

Is it true that almost surely every eigenvector $u$ of $G(n, p)$ has $||u||_{\infty}= n^{-1/2+o(1)} $? 
\end{question} 

The bound $n^{-1/2 +o(1)}$   was also conjectured by the second author of this paper in an NSF proposal (submitted Oct 2008). He and Tao \cite{tvrandom}  proved
this bound  for eigenvectors corresponding to the eigenvalues in 
 the bulk of the spectrum for the case $p=1/2$. If one defines the adjacency matrix by writting $-1$ for non-edges, then this bound holds 
for all eigenvectors \cite{tvrandom, tvrandom2}. 

The above two questions were raised under the assumption that $p$ is a constant in the interval $(0,1)$. For $p$ depending on $n$, the statements may fail. If $p \le \frac{(1-\epsilon) \log n }{n} $, then the graph has (with high probability)   isolated vertices and so one cannot expect that $\|u \| _{\infty} =o(1)$ for every eigenvector $u$. 
 We raise the following questions:

 \begin{question} \label{question:TVW1} Assume $ p \ge \frac{(1+ \epsilon) \log n }{n} $ for some constant $\epsilon >0$. 
Is it true that almost surely every eigenvector $u$ of $G(n, p)$ has $||u||_{\infty}=o(1)$? \end{question}

\begin{question} \label{question:TVW2} 
Assume $ p \ge \frac{(1+ \epsilon) \log n }{n} $ for some constant $\epsilon >0$. 
Is it true that almost surely every eigenvector $u$ of $G(n, p)$ has $||u||_{\infty}= n^{-1/2+o(1)} $? 
\end{question} 

Similarly, we can ask the above questions for $G_{n,d}$:

 \begin{question} \label{question:TVW3} Assume $ d \ge (1+ \epsilon) \log n$ for some constant $\epsilon >0$. 
Is it true that almost surely every eigenvector $u$ of $G_{n,d}$ has $||u||_{\infty}=o(1)$? \end{question}

\begin{question} \label{question:TVW4} 
Assume $ d \ge (1+ \epsilon) \log n $ for some constant $\epsilon >0$. 
Is it true that almost surely every eigenvector $u$ of $G_{n,d}$ has $||u||_{\infty}= n^{-1/2+o(1)} $? 
\end{question}

%These authors  \cite{DLL} pointed out that will help to prove a significantly better bound on the nodal domain count of $G(n,p)$ :

As far as random regular graphs is concerned,  Dumitriu and Pal \cite{SRRG} and Brook and Lindenstrauss \cite{BL} showed that for any normalized eigenvector of a
sparse random regular graph is delocalized in the sense that one can not have too much mass on a small set of coordinates. The readers may want to consult their papers for explicit statements. 

We generalize our questions by the following conjectures:

%Our numerical evidence suggests the  $n^{-1/2+o(1)} $ conjecture to be true. Figure \ref{fig:eigenvector} presents the distribution of the entry of a randomly chosen unit eigenvector (not the first one). 

\begin{con}
Assume $ p \ge \frac{(1+ \epsilon) \log n }{n} $ for some constant $\epsilon >0$. Let $v$ be a random unit vector whose distribution is uniform in the $(n-1)$-dimensional unit sphere. Let $u$ be a unit eigenvector of $G(n,p)$ and $w$ be any fixed $n$-dimensional vector. Then for any $\delta>0$
$$\P(|w\cdot u-w\cdot v|>\delta)=o(1).$$ 
\end{con}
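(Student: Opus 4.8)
The plan is to read this as an \emph{anisotropic delocalization} statement for eigenvectors of the normalized adjacency matrix and to reduce it to an isotropic local semicircle law. First I would normalize: since $A_n=\sigma M_n+pJ_n$ and $pJ_n$ has rank one, Lemma~\ref{EigenDiff} relates the spectra of $A_n$ and $\sigma M_n$, and a companion rank-one stability estimate relates their eigenvectors away from the Perron direction. Note that the top eigenvector of $G(n,p)$ is $(1+o(1))\,n^{-1/2}(1,\dots,1)$, for which the conclusion fails when $w$ points near it; so the conjecture must be read for the $n-1$ eigenvectors orthogonal to that direction (the ``bulk'', if one also requires the eigenvalue bounded away from $\pm 2$). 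Next, since $v$ is uniform on the sphere, $\E|w\cdot v|^2=\|w\|^2/n$, so $w\cdot v=o(1)$ with probability $1-o(1)$; by the triangle inequality the claim $\P(|w\cdot u-w\cdot v|>\delta)=o(1)$ is then equivalent to $\P(|w\cdot u|>\delta)=o(1)$, i.e.\ to $\max_i|\langle w,u_i\rangle|=o(1)$ for the relevant eigenvectors $u_i$ and every fixed unit $w$. (The sharper universality reading, that $\sqrt n\,(w\cdot u)$ has the same Gaussian limit as $\sqrt n\,(w\cdot v)$, needs the extra input mentioned below.)

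The engine for the reduced statement is an \emph{isotropic local semicircle law} for $n^{-1/2}M_n$: with $G(z)=(n^{-1/2}M_n-z)^{-1}$ and $z=E+i\eta$, $E\in[-2+\kappa,2-\kappa]$, one wants
$$\langle w,G(z)\,w\rangle=m_{sc}(z)\,\|w\|^2+o(1)$$
with overwhelming probability, uniformly for $\eta$ down to some scale $\eta_0=\eta_0(n,p)\rightarrow 0$, where $m_{sc}$ denotes the Stieltjes transform of the semicircle law. For $p$ of constant order this is a classical Wigner-type isotropic law; in the sparse range $p\ge(1+\epsilon)\log n/n$ one must run the self-consistent resolvent analysis together with the fluctuation-averaging and high-moment/stopping-time bounds of the sparse-matrix local law technology (in the spirit of Erd\H{o}s--Knowles--Yau--Yin), which is where the unboundedness of the normalized entries has to be absorbed. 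Given such a law, the spectral decomposition $\mathrm{Im}\,\langle w,G(E+i\eta)\,w\rangle=\eta\sum_i\frac{|\langle w,u_i\rangle|^2}{(\lambda_i-E)^2+\eta^2}$ evaluated at $\eta=\eta_0$ forces every term with $|\lambda_i-E|\le\eta_0$ to satisfy $|\langle w,u_i\rangle|^2=O(\eta_0\|w\|^2)=o(\|w\|^2)$; a net over $E$ then yields $\max_i|\langle w,u_i\rangle|=o(1)$ in the bulk, which together with the first paragraph proves the conjecture.

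The main obstacle is the sparse regime near the connectivity threshold $p\asymp\log n/n$: isotropic and anisotropic local laws for $G(n,p)$ are comfortable when $np\ge n^{\epsilon}$ but degrade --- both in the admissible scale $\eta_0$ and in the size of the error term --- as $np\rightarrow\log n$, so making the argument go through at $p=(1+\epsilon)\log n/n$ would push the sparse self-consistent-equation method essentially to its limit (this is presumably also why the present paper obtains $\|u\|_\infty=o(1)$ only for $p=\omega(\log n/n)$). Two further technical points: one must excise the Perron--Frobenius direction (and control near-degenerate bulk eigenvalues) uniformly in $n$, and one must upgrade the eigenvalue interlacing of Lemma~\ref{EigenDiff} to a genuine eigenvector comparison under the rank-one shift $pJ_n$. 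Finally, for the Gaussian-universality reading I would, after the local law, run Dyson Brownian motion and invoke the Bourgade--Yau eigenvector moment flow to make $\sqrt n\,\langle w,u_i(t)\rangle$ asymptotically normal for $t\gg\eta_0$, then remove the added Gaussian component by a Green's-function (four-moment) comparison for $\langle w,G(z)\,w\rangle$ using that the rescaled Bernoulli entries match a Gaussian in low moments; but carrying the eigenvector moment flow through the sparse regime appears to be open, so I expect the sparse isotropic local law to remain the crux.
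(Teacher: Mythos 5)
This statement is one of the paper's \emph{conjectures} (``We generalize our questions by the following conjectures''); the paper offers no proof of it, and your proposal does not supply one either. What you have written is a conditional program: the entire weight of the argument rests on an isotropic local semicircle law for $n^{-1/2}M_n$ at spectral scales $\eta_0=o(1)$, valid all the way down to the sparsity $p=(1+\epsilon)\log n/n$, and (for the distributional reading) on running eigenvector moment flow plus a Green's-function comparison in that same sparse regime. You yourself concede that these inputs degrade as $np$ approaches $\log n$ and that carrying them through near the connectivity threshold ``appears to be open.'' That is precisely the missing content: nothing in the paper (which works with the elementary eigenvector-entry formula of Lemma~\ref{EigenEntry}, the projection Lemma~\ref{ConcenLem}, and ESD concentration, and even then only reaches $p=\omega(\log n/n)$ and eigenvalues away from $\pm2$) provides such a law, and no proof of it is given in your write-up. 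Deferring the crux to an unproved local law is not a proof of the conjecture, so the attempt has a genuine gap rather than a completed argument.

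Even granting the sparse isotropic law, your reduction proves less than the conjecture asserts. The dyadic/resolvent step only controls eigenvectors with $\lambda_i\in[-2+\kappa,2-\kappa]$, and you must excise the Perron direction, whereas the conjecture quantifies over \emph{every} unit eigenvector $u$; your own observation that the statement fails when $w$ is aligned with the all-ones vector and $u$ is the top eigenvector is a remark about the conjecture's phrasing (or about the need to interpret ``fixed'' suitably), not something your argument resolves. Likewise, the equivalence you use, $\P(|w\cdot u-w\cdot v|>\delta)=o(1)\iff\P(|w\cdot u|>\delta)=o(1)$, silently assumes $\|w\|=O(1)$, while the conjecture allows an arbitrary fixed $n$-dimensional vector. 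So even as a conditional result the scope is strictly narrower than the statement being claimed; to be a correct contribution it should be presented as a reduction of (a bulk, unit-$w$ version of) the conjecture to a sparse isotropic local law, not as a proof.
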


\begin{con}
Assume $ d \ge (1+ \epsilon) \log n  $ for some constant $\epsilon >0$. Let $v$ be a random unit vector whose distribution is uniform in the $(n-1)$-dimensional unit sphere. Let $u$ be a unit eigenvector of $G_{n,d}$ and $w$ be any fixed $n$-dimensional vector. Then for any $\delta>0$
$$\P(|w\cdot u-w\cdot v|>\delta)=o(1).$$ 
\end{con}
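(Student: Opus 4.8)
The plan is to derive the conjecture from one analytic input, an \emph{isotropic local semicircle law} for $G_{n,d}$, so let me first record the reduction. One should read the statement with the Perron eigenvector $\mathbf{1}/\sqrt{n}$ of $G_{n,d}$ set aside (equivalently, assume $w\cdot\mathbf{1}=0$, or restrict to the $n-1$ eigenvectors orthogonal to $\mathbf{1}$), since for $w=\mathbf{1}$ the bound fails for that one eigenvector. With that understood, it suffices to prove the \emph{isotropic delocalization} estimate: for every fixed unit vector $w$, with overwhelming probability $|w\cdot u_k|=o(1)$ for all non-Perron eigenvectors $u_k$. Indeed, a Haar-random $v$ (write $v=g/\|g\|$, $g$ standard Gaussian) satisfies $\sqrt{n}\,(w\cdot v)\Rightarrow N(0,\|w\|^2)$, so $w\cdot v\rightarrow 0$ in probability; combining this with $w\cdot u=o(1)$ and the triangle inequality gives $\P(|w\cdot u-w\cdot v|>\delta)=o(1)$ for each fixed $\delta>0$, no coupling of $u$ and $v$ being needed.

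So the one thing to prove is that the resolvent $G(z)=(M'_n/\sqrt{n}-z)^{-1}$ is controlled in a fixed direction: for $z=E+i\eta$ with $E$ in the bulk and $\eta$ small, $\langle w,G(z)w\rangle=m_{sc}(z)\|w\|^2+o(1)$ with overwhelming probability, $m_{sc}$ being the Stieltjes transform of the semicircle law. This is much stronger than Theorem \ref{thm:ESD-regular}, which controls only eigenvalue \emph{counts} in intervals of the coarse length $\delta^{-4/5}d^{-1/10}\log^{1/5}d$ and is insensitive to how a single vector spreads over the eigenbasis. I would attack it by the self-consistent (Schur-complement / resolvent) method: first an entrywise law $G_{ii}(z)\approx m_{sc}(z)$, $G_{ij}(z)\approx 0$ for $i\ne j$, then polarization and averaging to the isotropic form; the $\ell_\infty$-delocalization of Question \ref{question:TVW3} is the special case $w=e_i$ and falls out along the way. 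The genuinely new feature compared with $G(n,p)$ is that the entries of $A'_n$ are \emph{not} independent, so the degree constraints must be fed into the self-consistent equation --- via standard couplings and a switching (local-resampling) expansion --- at the cost of extra error terms one must show are negligible at the scale $\eta$ in question.

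Granting such a law, the rest is short. From the spectral expansion $\Im\langle w,G(E+i\eta)w\rangle=\sum_k \eta\,(w\cdot u_k)^2/((\lambda_k-E)^2+\eta^2)$, keeping only the nonnegative $k$-th term and setting $E=\lambda_k$ gives $(w\cdot u_k)^2\le \eta\,\Im\langle w,G(\lambda_k+i\eta)w\rangle=O(\eta)$, the $O(1)$ coming from the boundedness of $\Im m_{sc}$ (the $O(\sqrt{n})$ eigenvalues near the edge are handled, if need be, through eigenvalue rigidity). Hence even a \emph{mesoscopic} isotropic law, valid at some fixed scale $\eta=n^{-c}$, already yields $|w\cdot u_k|=O(n^{-c/2})=o(1)$ and therefore the conjecture; pushing $\eta$ down to $n^{-1+o(1)}$ would give the sharper bound $|w\cdot u_k|\le n^{-1/2+o(1)}$ of Question \ref{question:TVW4}. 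As a refinement one can then upgrade the size $(w\cdot u_k)^2\approx 1/n$ to the Gaussian limit $\sqrt{n}\,(w\cdot u_k)\Rightarrow N(0,\|w\|^2)$ --- an exact match with Haar measure, which is arguably what ``comparison with a random unit vector'' should mean --- by a Lindeberg / Green-function comparison with the GOE (or with $G(n,p)$) in the spirit of the eigenvector moment flow, where each elementary move preserves degrees, so single-entry swaps are replaced by switchings of a pair of disjoint edges and one checks each perturbs the relevant resolvent functionals by $o(n^{-2})$ over $O(n^2)$ moves.

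The main obstacle is exactly this local law at a polynomially small scale for the \emph{constrained} model $G_{n,d}$: bounding $\langle w,G(z)w\rangle$ uniformly in $E$ down to $\eta$ power-small in $n$ needs fluctuation-averaging and large-deviation estimates adapted to the dependence among the entries of $A'_n$, and this is precisely where the moment method of the present paper runs out (it reaches only $\eta\sim d^{-1/10}$, non-uniform in $E$, and for counts rather than for the resolvent). I expect $d=n^{\Omega(1)}$ to be the most approachable regime --- the graph is then dense enough that a switching changes $G$ very little --- while for $d$ just above $(1+\eps)\log n$ the sparsity (near-minimal-degree vertices, short cycles) adds further work. Settling this local law is, to my mind, the crux, and is why the assertion is stated as a conjecture rather than proved.
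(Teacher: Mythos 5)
This statement is stated in the paper as a \emph{conjecture}; the paper offers no proof of it, so there is nothing of the authors' to compare your argument against, and your proposal does not close the gap either. What you have written is a conditional reduction, and you say so yourself: everything rests on an isotropic local semicircle law (or at least isotropic delocalization, $|w\cdot u_k|=o(1)$ for a fixed direction $w$) for $G_{n,d}$ at some polynomially small, or even just mesoscopic, spectral scale. Neither the paper nor your proposal establishes this. The paper's only tool for $G_{n,d}$ is the comparison argument of Section 2 (condition $G(n,p)$ on being $np$-regular, using Lemma \ref{lem:np-reg} together with the Guionnet--Zeitouni concentration of Lemma \ref{thm:ESD-con-general}), and that yields eigenvalue \emph{counts} on intervals of length about $d^{-1/10}$, with no information about how a fixed vector $w$ decomposes in the eigenbasis; it cannot produce the resolvent control $\langle w,G(z)w\rangle\approx m_{sc}(z)\|w\|^2$ that your final step needs. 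The dependence among the entries of $A'_n$ is exactly where the Schur-complement/fluctuation-averaging machinery you invoke requires genuinely new input (switchings, local resampling), and you correctly identify this as the crux rather than supplying it. So the verdict is: sound reduction, but a genuine gap --- the key lemma is missing, and it is the whole difficulty.

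Two smaller remarks. First, your observation that the statement must be read modulo the Perron direction is correct and worth making explicit: for $G_{n,d}$ the all-ones vector is exactly an eigenvector, so with $u=\mathbf{1}_n/\sqrt{n}$ and $w$ proportional to $\mathbf{1}_n$ one has $|w\cdot u-w\cdot v|$ bounded away from $0$ with probability $1-o(1)$; the conjecture as literally printed needs this exclusion (the same caveat applies to the companion conjecture for $G(n,p)$). Second, your reduction step is fine but note it proves something weaker than what the conjecture is morally after: since the statement only demands closeness at a fixed $\delta>0$, showing separately that $w\cdot u=o(1)$ and $w\cdot v=o(1)$ suffices, and no coupling or Gaussian limit for $\sqrt{n}\,(w\cdot u)$ is needed; the eigenvector-moment-flow refinement you sketch is extra and would itself require the same unproven local law as input.
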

%\begin{figure} [htbp]
%  \centering 
 % \includegraphics[scale=0.46]{vectorpdf1.pdf}% 
 % \hfill{}% 
%  \includegraphics[scale=0.48]{vectorcdf1.pdf} 
 % \caption{The PDF and CDF of one entry of \protect\\ 
  %         an arbitrary unit eigenvector(not the first one) for G(2000,0.2)} 
 % \label{fig:eigenvector}
%\end{figure}

In this paper, we focus on $G(n,p)$. Our main result settles  (positively)  Question \ref{question:DLL} and almost  Question \ref{question:TVW1} . This result follows from  Corollary \ref{cor:ESDconvrate} obtained in Section 2.   

\begin{thm}
\emph{(Infinity norm of eigenvectors)}
\label{Delocal} 
Let $p =\omega(\log n /n)$ and let $A_n$ be the adjacency matrix of $G(n,p)$. Then there exists an orthonormal basis of eigenvectors of $A_n$, $\{ u_1,\ldots, u_n \}$, such that for every $1\le i \le n$, $||u_i||_{\infty}=o(1)$ almost surely. 
\end{thm}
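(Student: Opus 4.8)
The plan is to deduce Theorem \ref{Delocal} from a quantitative control on the eigenvalue counting function of $G(n,p)$ at small scales, exactly as the text promises via Corollary \ref{cor:ESDconvrate}. The key identity is the standard one relating eigenvector entries to resolvent diagonal entries: if $u_1,\dots,u_n$ is an orthonormal eigenbasis of $A_n$ with eigenvalues $\lambda_1,\dots,\lambda_n$, then for any $z = E + i\eta$ with $\eta>0$,
$$\Im (A_n - zI)^{-1}_{kk} = \sum_{j=1}^n \frac{\eta\, |u_j(k)|^2}{(\lambda_j - E)^2 + \eta^2}.$$
Hence for the eigenvalue $\lambda_j$ one has $|u_j(k)|^2 \le \eta \cdot \Im (A_n - (\lambda_j + i\eta)I)^{-1}_{kk}$. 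So it suffices to show that, with overwhelming probability, the diagonal resolvent entries of $A_n$ are bounded by $o(1/\eta)$ for a suitable choice of $\eta = \eta(n) \to 0$; more precisely, I want to show $\max_k \Im (A_n - zI)^{-1}_{kk} = O(n\eta)$ or similar for $\eta$ as small as a small negative power of $n$ (or at least $\eta = n^{-\Omega(1)}$), after which $\|u_j\|_\infty^2 = O(n\eta^2) \to 0$ provided $\eta = o(n^{-1/2})$. Actually the cleanest route avoids resolvents entirely: the number of eigenvalues in any window $I$ around $\lambda_j$ is at least one, and if Corollary \ref{cor:ESDconvrate} gives $N_I(A_n) \le C n |I|$ for all intervals $I$ of length $\ge \ell(n)$ with $\ell(n) = o(1)$ (after the $\sqrt n \sigma$ scaling), then one partitions $[-2,2]$ into $O(1/\ell)$ such windows.

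The main step is the passage from the counting bound to the $\ell^\infty$ bound on a single eigenvector, and this is where one must be slightly careful because an a priori bound on $N_I$ alone does not localize a \emph{given} eigenvector's mass. The standard trick: fix a coordinate $k$ and a scale $\eta$. Write $A_n = A_n^{(k)} \oplus$ (the $k$-th row/column), and use the Schur complement / rank-one perturbation formula
$$(A_n - z)^{-1}_{kk} = \frac{1}{-z - \Bx_k^{*}(A_n^{(k)} - z)^{-1}\Bx_k}$$
where $\Bx_k$ is the $k$-th column of $A_n$ with the diagonal entry removed and $A_n^{(k)}$ is the $(n-1)\times(n-1)$ principal minor. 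Expanding $\Bx_k^*(A_n^{(k)}-z)^{-1}\Bx_k$ in the eigenbasis of $A_n^{(k)}$ and using that $\Bx_k$ has i.i.d. Bernoulli$(p)$ entries independent of $A_n^{(k)}$, a concentration argument (Hanson–Wright type, or just second moment plus the overwhelming-probability union bound since $p = \omega(\log n/n)$ makes the relevant sums concentrate) shows $\Im \Bx_k^*(A_n^{(k)} - z)^{-1}\Bx_k \approx \sigma^2 \cdot \frac{1}{n-1}\sum_j \frac{\eta}{(\mu_j - E)^2+\eta^2} \gtrsim \eta \cdot \frac{N_{[E-\eta,E+\eta]}(A_n^{(k)})}{n\eta^2} \cdot (\text{stuff})$, and crucially the \emph{upper} bound $\Im (A_n-z)^{-1}_{kk} \le 1/\Im(\cdot) \le O(1/(\eta \cdot N_I/(n\eta)))$ — but what I actually need is an \emph{upper} bound on $\Im (A_n - z)^{-1}_{kk}$, which follows from a \emph{lower} bound on $\Im \Bx_k^*(A_n^{(k)}-z)^{-1}\Bx_k$, i.e. from the fact that some eigenvalue of $A_n^{(k)}$ lies near $E$ — true by interlacing since $A_n$ has an eigenvalue $\lambda_j$ near $E$. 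Then $|u_j(k)|^2 \le \eta \Im(A_n - (\lambda_j+i\eta))^{-1}_{kk} \le \eta \cdot O(n\eta) = O(n\eta^2)$.

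So the order of operations is: (1) invoke Corollary \ref{cor:ESDconvrate} to get, with overwhelming probability, $N_I(A_n) \le C n|I|$ for every interval $I$ of length $\ge \ell(n) = o(1)$ after scaling $A_n$ by $(\sqrt n \sigma)^{-1}$ — equivalently pick $\eta$ just above $\ell$; (2) for each coordinate $k$, apply the Schur complement formula together with a concentration estimate for the quadratic form $\Bx_k^*(A_n^{(k)} - z)^{-1}\Bx_k$ in the Bernoulli$(p)$ vector $\Bx_k$, valid with overwhelming probability because $np = \omega(\log n)$, controlling both its real part (to keep $|-z - \Bx_k^*(\cdots)\Bx_k|$ from being too small generically) and the lower bound on its imaginary part via interlacing applied to the near-$E$ eigenvalue; (3) conclude $\Im(A_n - z)^{-1}_{kk} = O(n\eta)$ for $z = \lambda_j + i\eta$; (4) union-bound over the $n$ coordinates and the $\le n$ eigenvalues (all overwhelming-probability events survive, by the remark following the definition of overwhelming probability), getting $\|u_j\|_\infty^2 \le \eta \Im(A_n - (\lambda_j + i\eta))^{-1}_{kk} = O(n\eta^2)$; (5) choose $\eta = \eta(n)$ with $\ell(n) \ll \eta \ll n^{-1/2}$ — possible precisely when the scale $\ell(n)$ from Corollary \ref{cor:ESDconvrate} is $o(n^{-1/2})$, which holds in the regime $p = \omega(\log n/n)$ — and note $A_n$ and $M_n = \sigma^{-1}(A_n - pJ_n)$ have the same eigenvectors up to the rank-one correction, which by Lemma \ref{EigenDiff} and the fact that $J_n$'s top eigenvector is explicit (the all-ones direction, already delocalized) does not affect the $o(1)$ conclusion for the remaining basis vectors.

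The hard part, and the reason the argument is not completely routine, is step (2): getting a concentration bound for $\Bx_k^*(A_n^{(k)} - z)^{-1}\Bx_k$ good enough to work at scale $\eta = n^{-\Omega(1)}$ when the entries of $\Bx_k$ are \emph{sparse} Bernoulli$(p)$ with $p$ possibly as small as $\omega(\log n/n)$ — the resolvent at imaginary part $\eta$ has operator norm $1/\eta$, which is a growing power of $n$, so naive concentration of quadratic forms (which loses factors like $\|(A_n^{(k)}-z)^{-1}\|_{\mathrm{HS}}$ or $\|(A_n^{(k)}-z)^{-1}\|_{\mathrm{op}}$) is not immediately strong enough, and one genuinely needs the counting bound $N_I \le Cn|I|$ fed back in to control $\|(A_n^{(k)}-z)^{-1}\|_{\mathrm{HS}}^2 = \sum_j \frac{1}{(\mu_j - E)^2 + \eta^2} \le \frac{1}{\eta^2}\sum_{\text{dyadic}} N_{[E - 2^m\eta, E+2^m\eta]} 2^{-2m} = O(n/\eta)$, plus a moment bound on $\Bx_k$ (using $\E \xi = p$, $\mathrm{Var}\,\xi = \sigma^2$, boundedness) to make the deviation of the quadratic form from $\sigma^2 \mathrm{Tr}(A_n^{(k)}-z)^{-1}$ small relative to its imaginary part. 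This is exactly the content of Section 2 / Corollary \ref{cor:ESDconvrate}, so in the write-up this step is cited rather than redone.
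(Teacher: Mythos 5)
Your skeleton (Schur complement for the diagonal resolvent entry, which is essentially Lemma \ref{EigenEntry}, plus the counting bound of Corollary \ref{cor:ESDconvrate} and a concentration estimate for the quadratic form in the sparse Bernoulli vector) is the same architecture as the paper's proof, but the way you close the argument has a genuine quantitative gap that makes your parameter choice impossible. In step (5) you need $\ell(n)\ll\eta\ll n^{-1/2}$ and assert that the minimal scale $\ell(n)$ in Corollary \ref{cor:ESDconvrate} is $o(n^{-1/2})$ when $p=\omega(\log n/n)$. It is not: the corollary requires $|I|\ge\big(\log(np)/(\delta^4(np)^{1/2})\big)^{1/5}$, and since $np\le n$ this is at least of order $n^{-1/10}$ (at best $n^{-1/8}$ with the bulk improvement), for \emph{every} admissible $p$. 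So there is no $\eta$ with $\ell(n)\ll\eta\ll n^{-1/2}$, and the bound $\|u_j\|_\infty^2=O(n\eta^2)$ you aim for cannot be made $o(1)$ by this route. The reason you were forced into $\eta\ll n^{-1/2}$ is the weak lower bound $\Im\,\Bx_k^*(A_n^{(k)}-z)^{-1}\Bx_k\gtrsim |u_{j_0}^*\Bx_k|^2/\eta$ coming from a \emph{single} interlaced eigenvalue; and that step is itself unsound at the required probability level, because a single overlap $|u_{j_0}^*\Bx_k|^2$ of a fixed unit vector with a sparse Bernoulli vector has no overwhelming-probability lower bound (anti-concentration only gives constant-probability statements), so it cannot survive the union bound over $n$ coordinates and $n$ eigenvalues.

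The fix is exactly what the paper does, and what your ``$\approx\sigma^2\cdot\frac1{n-1}\sum_j\eta/((\mu_j-E)^2+\eta^2)$'' aside already points to: use the \emph{two-sided} counting estimate at a scale $|I_0|=o(1)$ (no smaller than $\ell(n)$) to produce a set $J$ of $\Theta(n|I_0|)$ eigenvalues of the minor within $O(|I_0|)$ of $\lambda_i$ in the bulk (respectively $\Theta(n|I_0|^{3/2})$ near the edge), and then lower bound the \emph{total} overlap $\|\pi_H(\Bx_k)\|^2\gtrsim\sigma^2\dim H$ via the Talagrand projection estimate (Lemma \ref{ConcenLem}); this concentrates with overwhelming probability precisely because $\sigma^2\dim H\gtrsim p\cdot n|I_0|\gg\log n$ when $p=\omega(\log n/n)$. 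Feeding this into Lemma \ref{EigenEntry} gives $|u_i(k)|^2=O(|I_0|)$ (or $O(|I_0|^{1/2})$ at the edge), which is $o(1)$ with no need for any scale below $n^{-1/2}$. Two further points you should not skip: the largest eigenvalue lies far outside $[-2,2]$ and is not covered by any counting/interlacing argument — the paper treats it separately via $\lambda_n(A_n)\ge np$ and $\Delta=(1+o(1))np$, giving $\|u\|_\infty=O(1/\sqrt{np})$; and Lemma \ref{EigenEntry} requires the eigenvalue of $A_n$ to avoid the spectrum of the minor, which the paper arranges by a small Gaussian perturbation and transfers back via Lemma \ref{lem:perturb} (your resolvent formulation with $\eta>0$ does avoid this particular issue, which is a small advantage of your phrasing, but the quantitative core above still has to be repaired).
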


%As  pointed out in \cite{DLL} (see \cite[Theorem 3.8]{DLL}), this implies that  $G(n,1/2)$ has two nodal domains and at most one exceptional vertices (the previous estimate was 46). 

For  Questions \ref{question:DLL2} and \ref{question:TVW2}, we obtain a good quantitative bound for those eigenvectors 
which correspond to eigenvalues bounded away from the edge of the spectrum. 

For convenience, in the case when $p=\omega(\log n/ n) \in (0,1)$, we write $$p=\frac{g(n) \log n}{n},$$ where $g(n)$ is a positive function such that $g(n)\rightarrow \infty$ as $n\rightarrow \infty$ ($g(n)$ can tend to $\infty$ arbitrarily slowly).

\begin{thm}
\label{DelocalBulk}
Assume $p={g(n)\log n}/{n} \in (0,1)$, where $g(n)$ is defined as above. Let $B_n=\frac{1}{\sqrt{n}\sigma} A_n$. For any $\kappa > 0$, and any $1 \le i \le n$ with $\lambda_i (B_n) \in [-2+\kappa, 2-\kappa]$,  there exists a corresponding eigenvector $u_i$ such that  $||u_i||_{\infty}=O_\kappa (\sqrt{ \frac{\log^{2.2} g(n)\log n}{np} } ) $with overwhelming probability.
\end{thm}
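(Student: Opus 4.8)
The plan is to derive the pointwise eigenvector bound from a local-law-type statement for the empirical spectral distribution of $B_n = \frac{1}{\sqrt n \sigma} A_n$, in the spirit of the delocalization arguments for Wigner matrices. The standard mechanism is this: fix a coordinate, say the $j$-th, and a small spectral window $I = [\lambda - \eta, \lambda + \eta]$ around an eigenvalue $\lambda_i(B_n) = \lambda \in [-2+\kappa, 2-\kappa]$. If $u_i$ is the (unit) eigenvector for $\lambda_i$, then for any diagonal entry of the resolvent $R(z) = (B_n - z)^{-1}$ with $z = \lambda + \mathrm{i}\eta$ one has the spectral decomposition $\mathrm{Im}\, R_{jj}(z) = \sum_{k} \frac{\eta\, |u_k(j)|^2}{(\lambda_k - \lambda)^2 + \eta^2} \ge \frac{|u_i(j)|^2}{\eta}$. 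Hence $|u_i(j)|^2 \le \eta\, \mathrm{Im}\, R_{jj}(\lambda + \mathrm{i}\eta)$, and an upper bound on the resolvent diagonal at imaginary part $\eta$ immediately gives $\|u_i\|_\infty^2 = O(\eta \cdot \max_j \mathrm{Im}\, R_{jj})$. So everything reduces to (a) controlling $\mathrm{Im}\, R_{jj}$ by something like $O(1)$ uniformly in $j$ for $\lambda$ in the bulk, and (b) choosing $\eta$ as small as the available concentration for $N'_I$-type quantities permits — here one wants $\eta$ of order $\frac{\log^{2.2} g(n)\log n}{np}$ so that $\eta \cdot O(1)$ matches the claimed bound.

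The route to (a) and (b) that fits this paper is to invoke Corollary \ref{cor:ESDconvrate} (referenced in the statement) giving concentration of $N_I(B_n)$ — the eigenvalue count in short intervals — together with a decomposition trick. First I would pass from the non-centered adjacency matrix $A_n$ to the centered matrix $M_n = \frac{1}{\sigma}(A_n - pJ_n)$: since $A_n - pJ_n$ and $A_n$ differ by a rank-one matrix, Lemma \ref{EigenDiff} controls the interlacing of their eigenvalue counts, so local spectral statistics transfer up to an error of one eigenvalue per interval, which is negligible once $n \eta \cdot \rho_{sc}(\lambda) \to \infty$. Then the eigenvector of $A_n$ associated with an interior eigenvalue can be compared to that of $M_n/\sqrt n$, which has i.i.d. mean-zero, variance-one entries above the diagonal, at which point standard Wigner-type resolvent estimates (Stieltjes transform self-consistent equation, or the combinatorial moment/Schur-complement bootstrap) apply — but now for \emph{sparse} entries with variance $\sigma^2 = p(1-p) \approx p$. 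The sparsity is what forces the extra logarithmic factors: the entries are not bounded by $O(n^{-1/2})$ but only in an $L^q$ sense, and the higher moments $\mathbb{E}|M_n(i,k)/\sqrt n|^q$ decay only like $(np)^{-(q-2)/2} n^{-q/2}$, so the fluctuation bounds on $R_{jj}$ carry a price polynomial in $\log(1/p)$, i.e. in $\log g(n)$; tracking these carefully through a high-moment (or martingale-difference) estimate for $R_{jj} - m_{sc}(z)$ is where the power $\log^{2.2} g(n)$ should emerge.

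Concretely, the steps in order: (1) reduce to $M_n$ via Lemma \ref{EigenDiff}; (2) write $u_i(j)^2 \le \eta\, \mathrm{Im}\, R_{jj}(\lambda+\mathrm{i}\eta)$ with $\eta := C_\kappa \frac{\log^{2.2} g(n)\log n}{np}$; (3) using Corollary \ref{cor:ESDconvrate}, show $N_I \le C n \eta$ for all windows of length $2\eta$ centered in $[-2+\kappa,2-\kappa]$ with overwhelming probability, which bounds the "far" part of $\sum_k \frac{\eta |u_k(j)|^2}{(\lambda_k-\lambda)^2+\eta^2}$ dyadically by $O(1)$; (4) handle the "near" part — eigenvalues within $\eta$ of $\lambda$ — by a Schur-complement / self-consistent-equation argument giving $\mathrm{Im}\, R_{jj}(\lambda + \mathrm{i}\eta) = O_\kappa(1)$ with the sparse-moment loss folded into the constant and the choice of the $\log^{2.2}$ power; (5) take a union bound over $j = 1,\dots,n$ and over a fine net of $\lambda$-values in the bulk, legitimate because each event holds with overwhelming probability and $n$ times such a probability is still $1 - o(1)$; (6) conclude $\|u_i\|_\infty^2 = O_\kappa(\eta)$, i.e. $\|u_i\|_\infty = O_\kappa(\sqrt{\log^{2.2} g(n) \log n / (np)})$. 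The main obstacle I anticipate is step (4): obtaining the resolvent diagonal bound $\mathrm{Im}\, R_{jj} = O_\kappa(1)$ at such small $\eta$ for sparse matrices requires a stability analysis of the self-consistent equation that is uniform in $j$ and robust to the heavy-tailed (Bernoulli, hence highly non-Gaussian when $p$ is small) entries; getting the exponent on $\log g(n)$ down to $2.2$ — rather than some larger unspecified power — is the delicate quantitative point, and it is presumably here that Corollary \ref{cor:ESDconvrate}'s explicit rate is leveraged in place of a full self-consistent local law.
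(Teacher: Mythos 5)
There is a genuine gap, and it sits exactly where you flag "the main obstacle": your plan for bounding $\mathrm{Im}\,R_{jj}(\lambda+\mathrm{i}\eta)$ does not close. In the spectral decomposition $\mathrm{Im}\,R_{jj}=\sum_k \eta|u_k(j)|^2/((\lambda_k-\lambda)^2+\eta^2)$ every term is weighted by an eigenvector entry $|u_k(j)|^2$ of $B_n$ itself, and an eigenvalue-counting statement such as Corollary \ref{cor:ESDconvrate} says nothing about these weights; your dyadic "far part" estimate implicitly assumes $\sum_{k\in\text{window}}|u_k(j)|^2\lesssim N_{\text{window}}/n$, i.e.\ delocalization of all the other eigenvectors, which is circular. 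The non-circular version of your idea is the minor (Schur-complement) identity, which is exactly the paper's Lemma \ref{EigenEntry}: there the weights become $|u_k(B_{n-1})^*X|^2$ where $X$ is the removed column, \emph{independent} of the minor $B_{n-1}$, so Talagrand's inequality (Lemma \ref{ConcenLem}) lower-bounds $\|\pi_H(X)\|^2$ for $H$ the span of the minor eigenvectors with $\lambda_j(B_{n-1})$ within $O(\eta)$ of $\lambda_i(B_n)$; one then needs only a count of those nearby minor eigenvalues and no far-part control whatsoever. Your sketch gestures at this in step (4) but defers it, whereas it is the entire proof.

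The second, quantitative gap is the scale at which you invoke Corollary \ref{cor:ESDconvrate}. That corollary requires intervals of length at least $\big(\log(np)/(\delta^4(np)^{1/2})\big)^{1/5}$ (or the $1/4$ power in the bulk), which with $np=g(n)\log n$ is polynomially larger than your window $\eta\asymp \log^{2.2}g(n)\log n/(np)$; it simply does not apply at the scale you need, so it cannot yield $N_I=\Theta(n\eta)$ there. The paper instead proves a separate, much finer concentration statement at precisely this scale (Lemma \ref{ConBulkAdj}, established in Appendix \ref{appendix:concentration} via the Stieltjes-transform/self-consistent-equation argument, with Lemma \ref{ConcenLem} controlling the fluctuations $|u_j(W_{n,k})^*a_k|^2-1/n$ and Proposition \ref{ESDStie} converting the Stieltjes bound into an eigenvalue count); this is where the exponent $2.2$ actually comes from, not from a high-moment bootstrap on individual resolvent entries. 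A complete argument along your lines would have to reproduce that local-scale concentration, apply it to the minor $B_{n-1}$, combine it with the projection lemma through Lemma \ref{EigenEntry}, and also deal with the multiplicity/interlacing hypothesis of Lemma \ref{EigenEntry} (the paper does this by the small Gaussian perturbation and Lemma \ref{lem:perturb}); none of these steps is supplied by the proposal as written.
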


%In Section 3, we obtain the semicircle law for $G(n,p)$ with $p=\omega(\frac{1}{n})$ and provide a short moment method proof(see Appendix), mimicking Wigner's original proof. 
The proofs are adaptations of a recent approach developed in random matrix theory (as in \cite{tvrandom},\cite{tvrandom2},\cite{erdos2009semi}, \cite{erdos09local}). 
The main technical lemma is  a concentration theorem about the number of eigenvalues on a finer scale for $p=\omega(\log n/n) $.

%%%%%%%%%%%%%%%%%%%%%%%%%%%%section1

\section{Semicircle law for regular random graphs}

\subsection{Proof of Theorem \ref{thm:ESD-regular}}

We use the method of comparison.  An important lemma is the following 

\begin{lem}\label{lem:np-reg} If $np\rightarrow \infty$ then $G(n,p)$ is $np$-regular with probability at least $\exp(-O(n(np)^{1/2})$.
\end{lem}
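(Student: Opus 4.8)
Put $d:=np$; assume $np\in\Z$ and $nd$ is even (otherwise replace $d$ by $\lfloor np\rfloor$, which changes the quantity to be bounded by a factor $2^{O(n)}$ and is harmless). Every labelled $d$-regular graph on $[n]$ receives the same $G(n,p)$-probability $p^{nd/2}(1-p)^{\binom{n}{2}-nd/2}$, so
$$\Pr\big(G(n,p)\text{ is }d\text{-regular}\big)=N_{n,d}\,p^{nd/2}(1-p)^{\binom{n}{2}-nd/2},$$
where $N_{n,d}=|\{d\text{-regular graphs on }[n]\}|$. By complementation (exactly as in the paragraph before Theorem~\ref{thm:SCL-regular}) we may assume $d\le n/2$, and since $\log x\le\sqrt x$ for $x\ge1$ it suffices to prove the probability is at least $\exp(-O(n\log(np)))$.

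\textbf{Main plan: Laplace's method on the degree sequence.} Since the edge indicators are independent, Fourier inversion gives
$$\Pr\big(D_1=\cdots=D_n=d\big)=\frac1{(2\pi)^n}\int_{[-\pi,\pi]^n}\ \prod_{k<l}\big(1-p+pe^{i(\theta_k+\theta_l)}\big)\ e^{-id\sum_j\theta_j}\,d\theta.$$
Using $\log(1-p+pe^{i\psi})=ip\psi-\tfrac{\sigma^2}{2}\psi^2+O(p|\psi|^3)$ for $|\psi|\le1$ (with $\sigma^2=p(1-p)$), together with $\sum_{k<l}(\theta_k+\theta_l)=(n-1)\sum_j\theta_j$ and $\sum_{k<l}(\theta_k+\theta_l)^2=(n-2)\|\theta\|^2+(\sum_j\theta_j)^2$, the linear phase collapses to $i\big((n-1)p-d\big)\sum_j\theta_j=-ip\sum_j\theta_j$ (negligible), and near $\theta=\mathbf 0$ the integrand is a Gaussian with precision quadratic form $\sigma^2\big[(n-2)\|\theta\|^2+(\sum_j\theta_j)^2\big]$. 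Laplace's method on a suitable neighbourhood of $\mathbf 0$ then contributes an amount of order $(2\pi np(1-p))^{-n/2}$; the identical contribution occurs at $\theta=\pi\mathbf 1$, the only other point where all $\theta_k+\theta_l\in2\pi\Z$ (the integrand there is again a positive real, which is exactly why we need $nd$ even), and the two add. On the complement one uses $|1-p+pe^{i\psi}|\le\exp(-\tfrac{p}{2}(1-\cos\psi))$, so the integrand is dominated by $\exp\!\big(-\tfrac p2\sum_{k<l}(1-\cos(\theta_k+\theta_l))\big)$, which is exponentially smaller than the main term once $\theta$ is bounded away from $\mathbf 0$ and $\pi\mathbf 1$. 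This yields $\Pr\ge\exp(-O(n\log(np)))\ge\exp(-O(n\sqrt{np}))$. Equivalently, one may quote the asymptotic enumeration formula $N_{n,d}=\dfrac{(nd)!}{(nd/2)!\,2^{nd/2}(d!)^n}e^{\,O(\text{lower order})}$ (McKay--Wormald for small $d$, with later extensions for larger $d$) and observe that the factor $p^{nd/2}(1-p)^{\binom n2-nd/2}$ cancels the bulk of $N_{n,d}$ up to $\exp(-O(n\log(np)))$.

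\textbf{An elementary substitute, and the main obstacle.} If one is content with $\exp(-O(n\sqrt{np}\,\log n))$ there is a soft switching argument avoiding all analysis: with probability $\ge\tfrac12$ the defect $\Delta:=\sum_i|D_i-d|$ is at most $Cn\sqrt{np}$ (because $\E|D_i-d|=O(\sqrt{np})$), so some even value $2k$ with $k\le Cn\sqrt{np}$ satisfies $\Pr(\Delta=2k)\ge1/\mathrm{poly}(n)$; a graph with $\Delta=2k$ can be driven to a $d$-regular one by $k$ hinge flips (delete an edge at an over-full vertex and add one at an under-full vertex — always possible, and leaving $|E|$, hence the $G(n,p)$-weight, unchanged), and fixing a canonical such sequence produces a weight-preserving, at most $n^{O(k)}$-to-one map to $d$-regular graphs, whence $\Pr(d\text{-regular})\ge n^{-O(k)}/\mathrm{poly}(n)$. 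The real difficulty — and the reason the soft argument loses a logarithm — lies in the Laplace step: it is needed \emph{uniformly} for all $1\ll d\le n/2$, and when $np\to\infty$ slowly the Gaussian window $\sim(np)^{-1/2}$ is wide, so the local region must be an ellipsoid adapted to the quadratic form above, chosen to capture a constant fraction of the Gaussian mass while keeping the cubic remainder at most a small multiple of the quadratic term; and the tail bound requires a short linear-algebra estimate showing $\sum_{k<l}(1-\cos(\theta_k+\theta_l))\gg n\log(np)$ whenever $\theta$ lies outside small neighbourhoods of $\mathbf 0$ and $\pi\mathbf 1$. The moderately dense range $n^{1/2}\lesssim d\lesssim n/\log n$ is precisely where no off-the-shelf enumeration formula applies, so some form of this analysis (or the switching bound) appears unavoidable there.
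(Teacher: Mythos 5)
You have identified the right obstacle, but the proposal does not overcome it, and the step you call ``a short linear-algebra estimate'' is both miscalibrated and false in the form your plan needs. For calibration: the paper does not prove Lemma \ref{lem:np-reg} analytically at all — it quotes Shamir--Upfal for $p\ge \log^2 n/n$ and McKay--Wormald's local limit theorem for the degree sequence (equivalently their enumeration results) for smaller $np$; your Fourier-inversion/Laplace plan is essentially an attempt to re-derive that McKay--Wormald computation, and the tail analysis is its actual substance. Two concrete problems. (i) Calibration: from $|1-p+pe^{i\psi}|\le\exp(-\tfrac p2(1-\cos\psi))$, beating the main Gaussian contribution $\asymp\exp\bigl(-\tfrac n2\log(2\pi np(1-p))\bigr)$ requires $\sum_{k<l}(1-\cos(\theta_k+\theta_l))\gg p^{-1}n\log(np)$, not $\gg n\log(np)$ as you wrote. (ii) Even the corrected pointwise statement is false: take $\theta=(\pi,0,\dots,0)$, which is bounded away from both $\mathbf 0$ and $\pi\mathbf 1$. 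Only the $n-1$ pairs meeting the first coordinate contribute, so $\sum_{k<l}(1-\cos(\theta_k+\theta_l))=2(n-1)$, and the integrand there has modulus $(1-2p)^{n-1}=\exp(-(2+o(1))np)$ when $p=o(1)$; for $np=o(n)$ (in particular throughout the range $\sqrt n\lesssim np\lesssim n/\log n$ that worries you, and for slowly growing $np$) this is vastly larger than the main term, whose exponent is at least $\tfrac n2\log(2\pi)$. So pointwise domination outside neighbourhoods of $\mathbf 0$ and $\pi\mathbf 1$ simply does not hold; one must partition the torus according to how many coordinates lie near $0$ or near $\pi$ and defeat such configurations by the exponentially small measure of those regions (or by exploiting oscillation), uniformly in $d$ — which is exactly the laborious content of the McKay--Wormald-type proofs, not a short estimate.

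Your two fallbacks do not close this gap either. Quoting enumeration formulas leaves out precisely the intermediate range you mention, as you concede, whereas the paper sidesteps it by using Shamir--Upfal for all $p\ge\log^2 n/n$ rather than enumeration of regular graphs. The switching argument gives only $\exp(-O(n\sqrt{np}\,\log n))$, which neither proves the stated bound $\exp(-O(n\sqrt{np}))$ nor suffices for the way the lemma is used: in the comparison following Corollary \ref{cor:ESDconvrate} the regularity probability must dominate the $G(n,p)$ failure probability $\exp(-cn\sqrt{np}\,\log(np))$, and when $np$ is polylogarithmic in $n$ one has $\log(np)\ll\log n$, so the switching bound is too weak. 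As written, then, the proposal does not establish the lemma; the honest routes are either to cite Shamir--Upfal and McKay--Wormald as the paper does, or to carry out in full the torus decomposition in your Laplace step.
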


For the range $p\ge\log^2 n/n$, Lemma \ref{lem:np-reg} is a consequence of 
 a result of Shamir and Upfal \cite{SU} (see also \cite{KSVW}). For smaller values of $np$, McKay and Wormald \cite{MKW1} calculated
 precisely the probability that $G(n,p)$ is $np$-regular, using the fact that the joint distribution of the degree sequence of $G(n,p)$ can be approximated by a simple model derived from  independent random variables with binomial distribution. Alternatively, one may calculate the same probability directly using the asymptotic formula for the number of $d$-regular graphs on $n$ vertices (again by McKay and Wormald \cite{MKW2}). Either way, for $p=o(1/\sqrt{n})$, we know that 
$$\P(G(n,p)\text{ is }np\text{-regular})\ge \Theta(\exp(-n\log(\sqrt{np})).$$
which is better than claimed in  Lemma \ref{lem:np-reg}.\\

Another  key ingredient  is the following concentration lemma, which may be of independent interest. 

\begin{lem}\label{thm:ESD-con-general} Let $M$ be a $n\times n$ Hermitian random matrix whose off-diagonal entries $\xi_{ij}$ are i.i.d. random variables with mean zero, variance 1 and $|\xi_{ij}|< K$ for some common constant $K$. Fix $\delta>0$ and assume that the forth moment  $M_4:=\sup_{i,j}\E(|\omega_{ij}|^4)=o(n)$. Then for any interval $I\subset [-2,2]$ whose length is at least $\Omega(\delta^{-2/3}(M_4/n)^{1/3})$, the number $N_I$ of the eigenvalues of $\frac{1}{\sqrt{n}}M$ which belong to $I$ satisfies the following concentration inequality
$$\P(|N_I-n\int_I\rho_{sc}(t)dt|>\delta n\int_I\rho_{sc}(t)dt)\le 4\exp(-c\frac{\delta^4n^2|I|^5}{K^2}).$$
\end{lem}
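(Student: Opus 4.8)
The plan is to establish the concentration inequality in Lemma \ref{thm:ESD-con-general} by combining two ingredients: (i) the \emph{expectation} of $N_I$ is close to $n\int_I \rho_{sc}$, which follows from the standard semicircle law together with a quantitative rate of convergence for the expected ESD, and (ii) the random variable $N_I$ is \emph{strongly concentrated} around its mean via a bounded-difference (Azuma--Hoeffding) argument exploiting the boundedness $|\xi_{ij}| < K$ and the eigenvalue interlacing available for low-rank perturbations. The output of step (i) should be a bound of the form $|\E N_I - n\int_I \rho_{sc}| = O(n \cdot \eta(n))$ where $\eta(n)$ is controlled by the fourth moment $M_4$; this is exactly where the length restriction $|I| \gtrsim \delta^{-2/3}(M_4/n)^{1/3}$ enters, since on an interval that short the ``main term'' $n\int_I \rho_{sc} \asymp n|I|$ must dominate the error $n\eta(n)$ with room to spare (a factor $\delta$). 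The cleanest route to the rate in step (i) is a bound on the difference of Stieltjes transforms $\E s_{M/\sqrt n}(z) - s_{sc}(z)$ for $z = E + i\nu$, giving an error like $O(M_4/(n\nu^3))$ (or similar), then converting to a counting estimate by the usual smoothing/Cauchy argument, choosing $\nu \sim |I|$; the fourth-moment hypothesis $M_4 = o(n)$ is what makes this error $o(1)$ after the conversion.

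For step (ii), I would order the vertices (or rather the entries) and reveal the matrix row by row: let $M^{(k)}$ be $M$ with the entries in the last $n-k$ rows and columns zeroed out, and set the Doob martingale $X_k = \E[N_I(M/\sqrt n) \mid \text{first } k \text{ rows}]$. Changing one row and column of a symmetric matrix is a perturbation of rank at most $2$, so by Cauchy interlacing (Lemma \ref{EigenDiff}, applied twice) the count $N_I$ changes by at most $2$; hence the martingale differences satisfy $|X_k - X_{k-1}| \le 2$. Azuma's inequality then gives $\P(|N_I - \E N_I| > t) \le 2\exp(-t^2/(8n))$. The catch is that this raw bound only gives deviations on the scale $\sqrt n$, whereas the target deviation is $\delta n \int_I \rho_{sc} \asymp \delta n |I|$, and with $|I|$ as small as $n^{-1/3}(\cdot)$ this is $\asymp \delta n^{2/3}$ — which is indeed $\gg \sqrt n$, so Azuma does suffice, and the resulting exponent is of order $\delta^2 n^2 |I|^2$. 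To sharpen the exponent to the claimed $\delta^4 n^2 |I|^5 / K^2$, I would instead apply the bounded-difference argument to a smoothed count — e.g.\ approximate $\mathbf 1_I$ from above and below by Lipschitz functions $f_\pm$ with Lipschitz constant $\sim 1/|I|$ and track $\mathrm{Tr}\, f_\pm(M/\sqrt n)$ — so that perturbing one row changes $\mathrm{Tr}\, f_\pm$ by only $O(K/(\sqrt n \,|I|))$ (a rank-$2$, operator-norm-$O(K/\sqrt n)$ perturbation moves each eigenvalue by $O(K/\sqrt n)$ in an $\ell^1$ sense, picking up the Lipschitz factor). Azuma with these much smaller increments yields the exponent $-c\, t^2 / (n \cdot K^2/(n|I|^2)) = -c\, t^2 |I|^2/K^2$; plugging $t = \delta n|I|$ gives $-c\,\delta^2 n^2 |I|^4/K^2$, and a final accounting of the smoothing error (which costs a bit of the length, effectively another factor of $|I|$ and $\delta$ in the right places) brings this to the stated $-c\,\delta^4 n^2 |I|^5/K^2$.

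The main obstacle I anticipate is getting the two error budgets to line up with the precise length threshold $\Omega(\delta^{-2/3}(M_4/n)^{1/3})$: the rate-of-convergence computation in step (i) must produce an error in the \emph{counting} function (not just the Stieltjes transform) that is genuinely $o(\delta \cdot n|I|)$ on intervals that short, and making the $M_4$ dependence come out as a clean cube root requires care in choosing the imaginary part $\nu$ and in the smoothing inequality that converts $\mathrm{Im}\, s$-control into eigenvalue-count control. A secondary technical point is justifying the Lipschitz-smoothing step cleanly: one needs that replacing the indicator by functions equal to $1$ on $I$ and $0$ outside a $|I|$-neighborhood only changes the count by $O(n|I|)$ \emph{in expectation} (again via the semicircle density being bounded on $[-2,2]$, which is fine since $I \subset [-2,2]$ but degenerates near the edges — hence presumably the factors of $\delta$ absorb the edge behavior, or one restricts attention away from $\pm 2$ and handles the edge separately). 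Once those bookkeeping issues are settled, the combination ``$|\E N_I - \text{main}| \le \frac{\delta}{2} \cdot \text{main}$ (step i) $+$ $\P(|N_I - \E N_I| > \frac{\delta}{2}\cdot \text{main}) \le 4\exp(-c\delta^4 n^2|I|^5/K^2)$ (step ii)'' gives the lemma by a union bound.
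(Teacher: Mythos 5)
Your step (i) matches the paper in substance: the paper controls $\E N_I$ by quoting the G\"otze--Tikhomirov convergence rate, $|\E N_I - n\int_I\rho_{sc}|\le \beta n\sqrt{M_4/n}$, and the length threshold $\Omega(\delta^{-2/3}(M_4/n)^{1/3})$ comes precisely from the edge case you were unsure about, where $\int_I\rho_{sc}\asymp|I|^{3/2}$, so that $\delta n|I|^{3/2}\gtrsim n\sqrt{M_4/n}$; the paper does not ``restrict away from $\pm2$'' but treats bulk and edge separately (with different choices of the approximation parameter $C$, namely $C=4/\delta$ and $C=(4/\delta)^{2/3}$).

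The genuine gap is in step (ii), and it is structural. First, your increment bound is miscomputed: re-randomizing one row and column of $\frac{1}{\sqrt n}M$ is a rank-$\le 2$ perturbation whose operator (and trace) norm is $O(K)$, not $O(K/\sqrt n)$, because the changed row has $n$ entries each of size up to $K/\sqrt n$ and hence Euclidean norm $O(K)$. With the correct increment, a $\frac{1}{|I|}$-Lipschitz smoothed count changes by $O(K/|I|)$ per row, so Azuma over $n$ rows gives an exponent of order $T^2|I|^2/(nK^2)$, i.e.\ about $\delta^2 n|I|^4/K^2$ in the bulk (and $\delta^2 n|I|^5/K^2$ at the edge) --- a full factor of $n$ short of the claimed $\delta^4 n^2|I|^5/K^2$. (An entry-by-entry martingale gives the same total: $n^2$ steps with increments $O(K/(\sqrt n\,|I|))$, hence the same variance proxy $nK^2/|I|^2$.) This loss is not cosmetic: in the application to $G_{n,d}$ the concentration bound must beat the probability $\exp(-O(n\sqrt{np}))$ that $G(n,p)$ is regular, and an exponent linear in $n$ does not. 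The paper avoids this by using Talagrand's inequality for \emph{convex} Lipschitz functions of bounded independent entries (the Guionnet--Zeitouni lemma), whose exponent $T^2/(K^2L^2)$ involves the Euclidean Lipschitz constant in all $n^2$ entries and carries no factor of $n$; since the indicator is neither convex nor Lipschitz, it is sandwiched via differences $X_1-X_2$ and $Y_1-Y_2$ of convex piecewise-linear functions with slope $C/|I|$, and the error terms $N_{I_l}+N_{I_r}$ are absorbed using the same G\"otze--Tikhomirov estimate. If you want to salvage your outline, you must replace Azuma/bounded differences by this Talagrand-type mechanism, and that in turn forces the convex-decomposition trick, since your smoothed counts $\mathrm{Tr}\,f_\pm$ are not convex functions of the matrix entries.
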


%{\bf Hi Ke and LInh: I changed it to a Lemma, do so later on} 

Apply Lemma  \ref{thm:ESD-con-general} for the normalized adjacency matrix $M_n$ of $G(n,p)$ with $K=1/\sqrt{p}$  we obtain 

\begin{cor}\label{cor:ESDconvrate} Consider the model $G(n,p)$ with  $np\rightarrow  \infty$ as $n\rightarrow  \infty$  and let $\delta>0$. Then for any interval $I\subset[-2,2]$ with length at least $\big(\frac{\log(np)}{\delta^4(np)^{1/2}}\big)^{1/5}$, we have
$$|N_I-n\int_I\rho_{sc}(x)dx|\ge \delta n\int_I\rho_{sc}(x)dx$$
with probability at most $\exp(-cn(np)^{1/2}\log(np))$.
\end{cor}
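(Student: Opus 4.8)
The plan is to deduce the corollary directly from Lemma \ref{thm:ESD-con-general} applied to the normalized adjacency matrix $M_n = \frac{1}{\sigma}(A_n - pJ_n)$ of $G(n,p)$. First I would check the hypotheses of the lemma for $M_n$. The off-diagonal entries of $M_n$ are $\xi_{ij} = (a_{ij}-p)/\sigma$ where $a_{ij}$ is Bernoulli$(p)$ and $\sigma = \sqrt{p(1-p)}$; these are i.i.d.\ with mean zero and variance one. For the boundedness, $|\xi_{ij}| \le \max\{(1-p)/\sigma, p/\sigma\} \le 1/\sigma \le 1/\sqrt{p(1-p)}$, so one may take $K$ to be a constant multiple of $1/\sqrt{p}$ (assuming $p \le 1/2$ without loss of generality, which is harmless since the complement argument used earlier for $G_{n,d}$ applies verbatim here). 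For the fourth moment, $\E|\xi_{ij}|^4 = \E|a_{ij}-p|^4/\sigma^4 = (p(1-p)^4 + (1-p)p^4)/(p(1-p))^2 = ((1-p)^3+p^3)/(p(1-p)) = O(1/p)$; since $np\to\infty$ this is $O(1/p) = O(n/(np)) = o(n)$, so $M_4 = O(1/p)$ satisfies $M_4 = o(n)$ as required.

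Next I would substitute these parameters into the two quantitative bounds of Lemma \ref{thm:ESD-con-general}. The length requirement becomes $|I| = \Omega\big(\delta^{-2/3}(M_4/n)^{1/3}\big) = \Omega\big(\delta^{-2/3}(1/(np))^{1/3}\big)$, which is weaker than the stated hypothesis $|I| \ge (\log(np)/(\delta^4(np)^{1/2}))^{1/5}$ once $np\to\infty$: indeed $(np)^{-1/10}$ (up to the logarithmic factor) is much larger than $(np)^{-1/3}$, so the stated interval length comfortably satisfies the lemma's hypothesis. The probability bound becomes $4\exp(-c\delta^4 n^2 |I|^5 / K^2) = 4\exp(-c' \delta^4 n^2 |I|^5 p)$. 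Plugging in the minimal allowed length $|I|^5 = \log(np)/(\delta^4(np)^{1/2})$ gives exponent $-c' \delta^4 n^2 p \cdot \log(np)/(\delta^4 (np)^{1/2}) = -c' n^2 p \log(np)/(np)^{1/2} = -c' n (np)^{1/2}\log(np)$, which matches the claimed bound $\exp(-cn(np)^{1/2}\log(np))$; for longer intervals the exponent is only more negative, so the same bound holds uniformly.

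This is essentially a bookkeeping deduction, so there is no serious obstacle in the corollary itself — the real work is in proving Lemma \ref{thm:ESD-con-general} (via a Lindeberg/Stieltjes-transform comparison with the semicircle law together with a McDiarmid-type martingale concentration in which each entry changes $N_I$ by $O(1)$, explaining the $K^2$ and the $|I|^5$ through resolvent bounds). The one point I would be careful about is the choice of $K$: since $1/\sqrt p$ is not a \emph{constant} when $p\to 0$, one must read Lemma \ref{thm:ESD-con-general} as allowing $K=K(n)$ and simply track it through the final exponent, which is exactly what the statement of Corollary \ref{cor:ESDconvrate} does. I would also remark that combining the corollary with Lemma \ref{lem:np-reg} (coupling $G(n,p)$ with $G_{n,np}$, whose adjacency matrices differ by a matrix of small rank/norm, controlled via Lemma \ref{EigenDiff}) is what ultimately yields Theorem \ref{thm:ESD-regular}, but that step lies beyond the statement to be proved here.
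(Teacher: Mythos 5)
Your proposal is correct and follows exactly the paper's route: the paper obtains the corollary by applying Lemma \ref{thm:ESD-con-general} to the normalized matrix $M_n$ with $K=1/\sqrt{p}$, which is precisely your deduction, and your bookkeeping (the bound $M_4=O(1/p)=o(n)$, the check that the stated interval length dominates $\delta^{-2/3}(np)^{-1/3}$, and the substitution giving the exponent $-c\,n(np)^{1/2}\log(np)$) correctly fills in the details the paper leaves implicit.
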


\begin{rem} If one only needs the result for the bulk case $I\subset [-2+\epsilon,2-\epsilon]$ for an absolute constant $\epsilon>0$ then the minimum length of $I$ can be improved to $\big(\frac{\log(np)}{\delta^4(np)^{1/2}}\big)^{1/4}$.
\end{rem}

By Corollary \ref{cor:ESDconvrate} and Lemma \ref{lem:np-reg}, the probability that $N_I$ fails to be close to the expected value in the model $G(n,p)$ is much smaller than the probability that $G(n,p)$ is $np$-regular. Thus the probability that $N_I$ fails to be close to the expected value in the model $G_{n,d}$ where $d=np$ is the ratio of the two former probabilities, which is $O(\exp(-cn\sqrt{np}\log np))$ for some small positive constant $c$. Thus, Theorem \ref{thm:ESD-regular} is proved, depending on Lemma \ref{thm:ESD-con-general} which we turn to next.

%\subsection

\subsection
{Proof of Lemma  \ref{thm:ESD-con-general}} 

Assume $I=[a,b]$ and $a-(-2)<2-b$. 

We will use the approach of Guionnet and Zeitouni in \cite{GZ}. Consider a random Hermitian matrix $W_n$ with independent entries $w_{ij}$ with support in a compact region $S$. Let $f$ be a real convex $L$-Lipschitz function and define
$$Z:=\sum_{i=1}^nf(\lambda_i)$$
where $\lambda_i$'s are the eigenvalues of $\frac{1}{\sqrt{n}}W_n$. We are going to view $Z$ as the function of the atom variables $w_{ij}$. For our application we need $w_{ij}$ to be random variables with mean zero and variance 1, whose absolute values are bounded by a common constant $K$.

The following concentration inequality is from \cite{GZ}

\begin{lem}\label{GZconcentration} Let $W_n, f, Z$ be as above. Then there is a constant $c>0$ such that for any $T>0$
$$\P(|Z-\E(Z)|\ge T)\le 4\exp(-c\frac{T^2}{K^2L^2}).$$
\end{lem}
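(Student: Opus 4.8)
The plan is to realize $Z$ as a \emph{convex, Lipschitz} function of the independent matrix entries and then invoke Talagrand's concentration inequality for convex Lipschitz functions of bounded independent variables. Write $Z=\operatorname{Tr} f\!\left(\tfrac{1}{\sqrt n}W_n\right)$ and regard it as a function $\Phi$ of the independent real coordinates of $W_n$: the diagonal entries $w_{ii}$ together with the real and imaginary parts of $w_{ij}$ for $i<j$. Since $|w_{ij}|<K$, each such coordinate lies in a bounded interval of length $O(K)$. Note that only boundedness of the entries (not the mean/variance normalizations) is used here.

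First I would establish convexity of $\Phi$. The coordinate map $(w_{ij})\mapsto W_n$ is real-linear and $1/\sqrt n$ is a positive scalar, so it suffices to show that $A\mapsto\operatorname{Tr} f(A)$ is convex on Hermitian matrices whenever $f$ is convex. This follows from Peierls' inequality: if $C=\theta A+(1-\theta)B$ has orthonormal eigenbasis $\{v_k\}$ with $Cv_k=\mu_k v_k$, then $\operatorname{Tr} f(C)=\sum_k f(\langle v_k,Cv_k\rangle)=\sum_k f\big(\theta\langle v_k,Av_k\rangle+(1-\theta)\langle v_k,Bv_k\rangle\big)\le\theta\sum_k f(\langle v_k,Av_k\rangle)+(1-\theta)\sum_k f(\langle v_k,Bv_k\rangle)\le\theta\operatorname{Tr} f(A)+(1-\theta)\operatorname{Tr} f(B)$, where the first inequality is convexity of $f$ and the last uses $\sum_k f(\langle v_k,Av_k\rangle)\le\operatorname{Tr} f(A)$ for any orthonormal basis. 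Composing with the linear coordinate map shows $\Phi$ is convex.

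Next I would bound the Lipschitz constant of $\Phi$; this is the step where the normalization is essential. For Hermitian $A,B$ with eigenvalues listed in increasing order, the $L$-Lipschitz property of $f$ gives $|\operatorname{Tr} f(A)-\operatorname{Tr} f(B)|\le L\sum_i|\lambda_i(A)-\lambda_i(B)|$, and Cauchy--Schwarz followed by the Hoffman--Wielandt inequality yields $\sum_i|\lambda_i(A)-\lambda_i(B)|\le\sqrt n\,\big(\sum_i|\lambda_i(A)-\lambda_i(B)|^2\big)^{1/2}\le\sqrt n\,\Norm{A-B}_{HS}$. Applying this with $A=\tfrac{1}{\sqrt n}W_n$ and $B=\tfrac{1}{\sqrt n}W_n'$ makes the factors $\sqrt n$ and $1/\sqrt n$ cancel, so $|\Phi(W_n)-\Phi(W_n')|\le L\,\Norm{W_n-W_n'}_{HS}$; since each off-diagonal entry is counted twice in the Hilbert--Schmidt norm, this is at most $\sqrt2\,L$ times the Euclidean distance between the coordinate vectors. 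Hence $\Phi$ is $\sqrt2\,L$-Lipschitz, with constant independent of $n$.

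Finally I would apply Talagrand's inequality: a convex function that is $\ell$-Lipschitz in the Euclidean norm of independent variables each confined to an interval of length $O(K)$ satisfies $\P(|\Phi-m|\ge T)\le 4\exp(-cT^2/(\ell^2K^2))$, where $m$ is a median of $\Phi$. With $\ell=\sqrt2\,L$ this gives the claimed bound about the median, and I would pass to the mean in the standard way: integrating the median tail bound gives $|\E Z-m|=O(LK)$, while for $T$ below a fixed multiple of $LK$ the asserted bound already exceeds $1$ and is trivial, so for $T$ above that threshold one replaces $m$ by $\E Z$ at the cost of shrinking $c$. This yields $\P(|Z-\E Z|\ge T)\le 4\exp(-cT^2/(K^2L^2))$ for all $T>0$. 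The main obstacle is the Lipschitz estimate of the third paragraph: everything hinges on the cancellation of $\sqrt n$ there, which is exactly where the $1/\sqrt n$ scaling in the definition of $Z$ enters and what makes the constant dimension-free. Convexity is indispensable because Talagrand's dimension-free inequality fails for general Lipschitz functions of bounded variables, which is precisely why $f$ is assumed convex.
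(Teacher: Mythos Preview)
The paper does not give its own proof of this lemma; it simply quotes it from Guionnet--Zeitouni \cite{GZ}. Your argument is correct and is precisely the standard proof from that reference: convexity of $A\mapsto\operatorname{Tr} f(A)$ via the Klein/Peierls inequality, the Lipschitz bound via Hoffman--Wielandt with the $\sqrt n$ cancellation against the $1/\sqrt n$ scaling, and then Talagrand's convex-distance inequality together with the routine median-to-mean swap.
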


In order to apply Lemma \ref{GZconcentration} for $N_I$ and $M$, it is natural to consider 
$$Z:=N_I=\sum_{i=1}^n \chi_I(\lambda_i)$$
where $\chi_I$ is the indicator function of $I$ and $\lambda_i$ are the eigenvalues of $\frac{1}{\sqrt{n}}M_n$. However, this function is neither convex nor Lipschitz. As suggested in \cite{GZ}, one can  overcome this problem by a proper approximation.  Define $I_l=[a-\frac{|I|}{C},a]$, $I_r=[b,b+\frac{|I|}{C}]$ and construct two real functions $f_1, f_2$ as follows(see Figure \ref{fig:fg}): 
\begin{equation*}
f_1(x)=\Bigg\{
\begin{array}{ll}
-\frac{C}{|I|}(x-a)-1& \text{if }x\in (-\infty, a-\frac{|I|}{C})\\
0&\text{if }x\in I\cup I_l\cup I_r\\
\frac{C}{|I|}(x-b)-1& \text{if }x\in (b+\frac{|I|}{C},\infty)
\end{array}
\end{equation*}

\begin{equation*}
f_2(x)=\Bigg\{
\begin{array}{ll}
-\frac{C}{|I|}(x-a)-1& \text{if }x\in (-\infty, a)\\
-1&\text{if }x\in I\\
\frac{C}{|I|}(x-b)-1& \text{if }x\in (b,\infty)
\end{array}
\end{equation*}

 where $C$ is a constant to be chosen later. Note that $f_j$'s are convex and $\frac{C}{|I|}$-Lipschitz. Define $$X_1=\sum_{i=1}^n f_1(\lambda_i),\ X_2=\sum_{i=1}^n f_2(\lambda_i)$$ and  apply Lemma \ref{GZconcentration} with $T=\frac{\delta}{8} n\int_I\rho_{sc}(t)dt$ for $X_1$ and $X_2$. Thus, we have
\begin{align*}
\P(|X_j-\E(X_j)|\ge \frac{\delta}{8} n\int_I\rho_{sc}(t)dt)&\le4\exp(-c\frac{\delta^2n^2|I|^2(\int_I\rho_{sc}(t)dt)^2}{K^2C^2}).\\
\end{align*}
At this point we need to estimate the value of $\int_I\rho_{sc}(t)dt$. There are two cases: if $I$ is in the ``bulk'' i.e. $I\subset[-2+\epsilon,2-\epsilon]$ for some positive absolute constant $\epsilon$, then $\int_I\rho_{sc}(t)dt=\alpha |I|$ where $\alpha$ is a constant depending on $\epsilon$. But if $I$ is very near the edge of $[-2,2]$ i.e. $a-(-2)<|I|=o(1)$, then $\int_I\rho_{sc}(t)dt=\alpha'|I|^{3/2}$ for some absolute constant $\alpha'$. Thus in both case we have
$$\P(|X_j-\E(X_j)|\ge \frac{\delta}{8} n\int_I\rho_{sc}(t)dt)\le4\exp(-c_1\frac{\delta^2n^2|I|^5}{K^2C^2})$$

 Let $X=X_1-X_2$, then 
$$\P(|X-\E(X)|\ge \frac{\delta}{4} n\int_I\rho_{sc}(t)dt)\le O(\exp(-c_1\frac{\delta^2n^2|I|^5}{K^2C^2})).$$

Now we compare $X$ to $Z$, making use of a result of 
G\"otze and Tikhomirov \cite{GT}.  We have $\E(X-Z)\le \E(N_{I_l}+N_{I_r})$. In \cite{GT}, G\"{o}tze and Tikhomirov obtained  a convergence rate for ESD of Hermitian random matrices whose entries have mean zero and variance one, which implies that for any $I\subset[-2,2]$
$$|\E(N_I)-n\int_I\rho_{sc}(t)dt|<\beta n\sqrt{\frac{M_4}{n}},$$
where $\beta$ is an absolute constant, $M_4=\sup_{i,j}\E(|\omega_{ij}|^4)$. 
Thus
$$\E(X)\le\E(Z)+n\int_{I_l\cup I_r}\rho_{sc}(t)dt+\beta n\sqrt{\frac{M_4}{n}}.$$
In the ``edge" case we can choose $C=(4/\delta)^{2/3}$, then because $|I|\ge \Omega(\delta^{-2/3}(M_4/n)^{1/3})$, we have
$$n\int_{I_l\cup I_r}\rho_{sc}(t)dt=\Theta(n(\frac{|I|}{C})^{3/2})>\Omega( n\sqrt{\frac{M_4}{n}})$$
and
$$n\int_{I_l\cup I_r}\rho_{sc}(t)dt+\beta n\sqrt{\frac{M_4}{n}}=\Theta(n(\frac{|I|}{C})^{3/2})=\Theta(\frac{\delta}{4}n\int_I\rho_{sc}(t)dt).$$
In the ``bulk'' case we choose $C=4/\delta$, then
$$n\int_{I_l\cup I_r}\rho_{sc}(t)dt+\beta n\sqrt{\frac{M_4}{n}}=\Theta(n\frac{|I|}{C})=\Theta(\frac{\delta}{4}n\int_I\rho_{sc}(t)dt).$$
Therefore in both cases, with probability at least $1-O(\exp(-c_1\frac{\delta^4n^2|I|^5}{K^2}))$, we have
$$Z\le X\le \E(X)+ \frac{\delta}{4}n\int_I\rho_{sc}(t)dt < \E(Z)+\frac{\delta}{2} n\int_I\rho_{sc}(t)dt.$$
The convergence rate result of G\"{o}tze and Tikhomirov again gives
$$\E(N_I)<n\int_I\rho_{sc}(t)dt+ \beta n\sqrt{\frac{M_4}{n}}<(1+\frac{\delta}{2})n\int_I\rho_{sc}(t)dt,$$
hence with probability at least $1-O(\exp(-c_1\frac{\delta^4n^2|I|^5}{K^2}))$
$$Z<(1+\delta) n\int_I\rho_{sc}(t)dt,$$
which is the desires upper bound.

The lower bound is proved using a similar argument. Let $I'=[a+\frac{|I|}{C}, b-\frac{|I|}{C}]$, $I'_l=[a,a+\frac{|I|}{C}]$, $I'_r=[b-\frac{|I|}{C},b]$ where $C$ is to be chosen later and define two functions $g_1$, $g_2$ as follows (see Figure \ref{fig:fg}):

\begin{equation*}
g_1(x)=\Bigg\{
\begin{array}{ll}
-\frac{C}{|I|}(x-a)& \text{if }x\in (-\infty, a)\\
0&\text{if }x\in I'\cup I'_l\cup I'_r\\
\frac{C}{|I|}(x-b)& \text{if }x\in (b,\infty)
\end{array}
\end{equation*}

\begin{equation*}
g_2(x)=\Bigg\{
\begin{array}{ll}
-\frac{C}{|I|}(x-a)& \text{if }x\in (-\infty, a+\frac{|I|}{C})\\
-1&\text{if }x\in I'\\
\frac{C}{|I|}(x-b)& \text{if }x\in (b-\frac{|I|}{C},\infty)
\end{array}
\end{equation*}

Define $$Y_1=\sum_{i=1}g_1(\lambda_i),\ Y_2=\sum_{i=1}g_2(\lambda_i).$$ Applying Lemma \ref{GZconcentration} with $T=\frac{\delta}{8} n\int_{I}\rho_{sc}(t)dt$ for $Y_j$ and using the estimation for $\int_I\rho(t)dt$ as above, we have
$$\P(|Y_j-\E(Y_j)|\ge \frac{\delta}{8} n\int_{I}\rho_{sc}(t)dt)\le4\exp(-c_2\frac{\delta^2n^2|I|^5}{K^2C^2}).$$
Let $Y=Y_1-Y_2$, then 
$$\P(|Y-\E(Y)|\ge \frac{\delta}{4} n\int_{I}\rho_{sc}(t)dt)\le O(\exp(-c_2\frac{\delta^2n^2|I|^5}{K^2C^2})).$$

We have $\E(Z-Y)\le \E(N_{I'_l}+N_{I'_r})$.  A similar argument as in the proof of the upper bound (using the convergence rate of G\"{o}tze and Tikhomirov) shows

$$\E(Y)\ge\E(Z)-n\int_{I'_l\cup I'_r}\rho_{sc}(t)dt-\beta n\sqrt{\frac{M_4}{n}}>E(Z)- \frac{\delta}{4}n\int_I\rho_{sc}(t)dt.$$
Therefore with probability at least $1-O(\exp(-c_2\frac{\delta^2n^2|I|^5}{K^2C^2}))$, we have
$$Z\ge Y\ge \E(Y)- \frac{\delta}{4}n\int_I\rho_{sc}(t)dt > \E(Z)-\frac{\delta}{2} n\int_I\rho_{sc}(t)dt,$$
and by the convergence rate, with probability at least $1-O(\exp(-c2\frac{\delta^2n^2|I|^5}{K^2C^2}))$
$$Z>(1-\delta)n\int_I\rho_{sc}(t)dt.$$
Thus, Theorem \ref{thm:ESD-con-general} is proved.
\endproof

\begin{figure} [htbp]
  \centering 
  \includegraphics[scale=0.52]{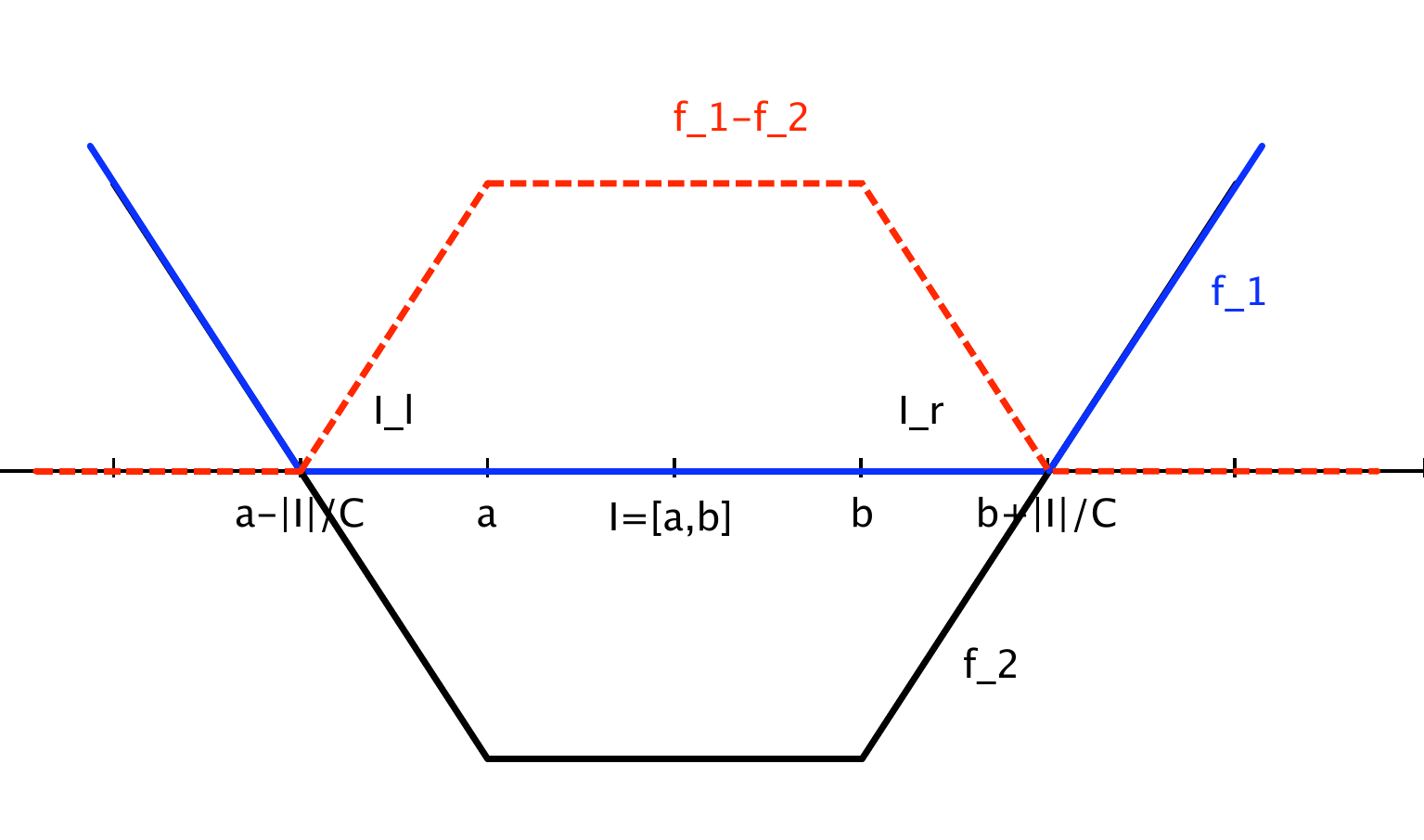}% 
  \hfill{}% 
  \includegraphics[scale=0.52]{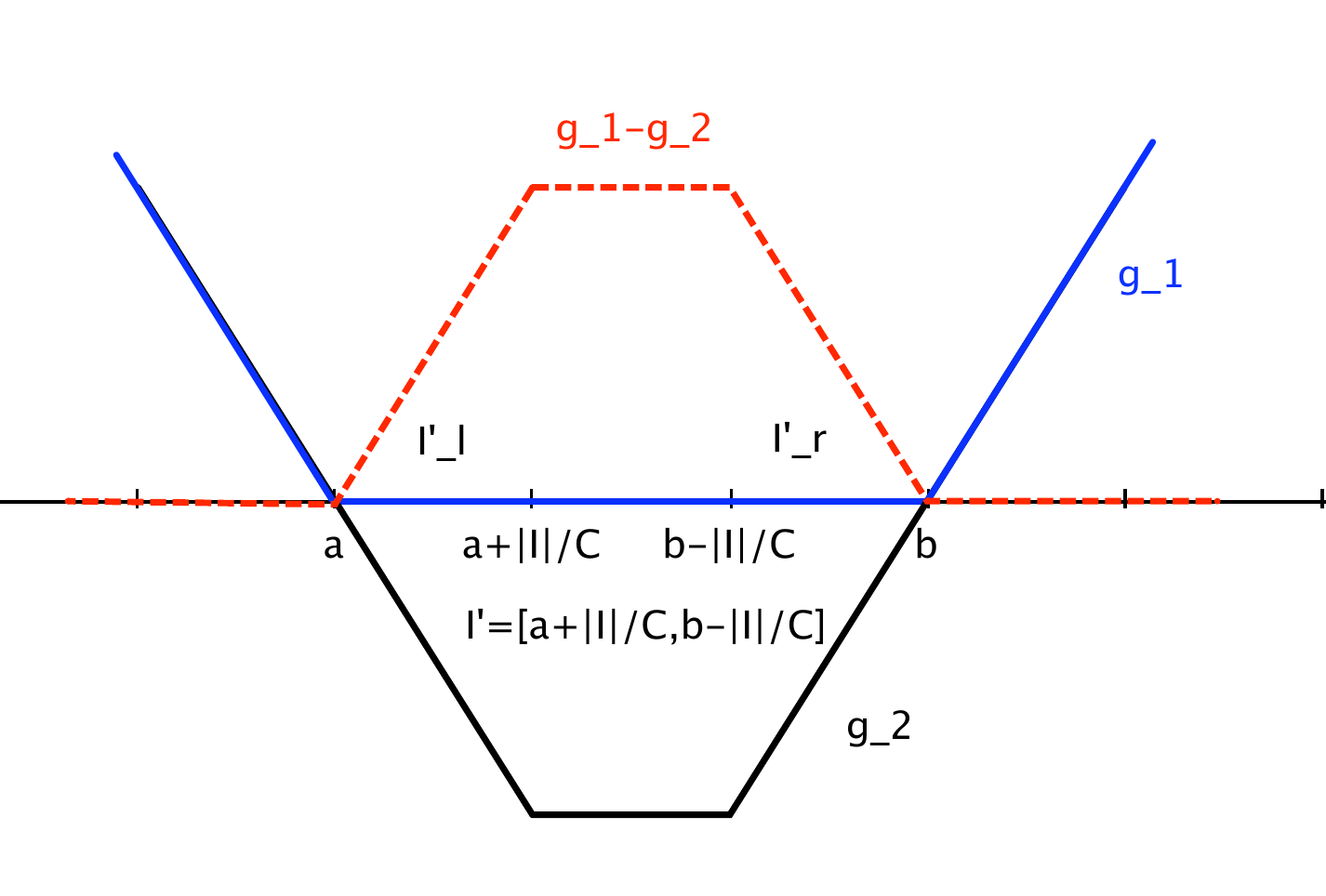} 
  \caption{Auxiliary functions used in the proof} 
  \label{fig:fg}
\end{figure}

%%%%%%%%%%%%%%%%%%%%%%%%section2

\section{Infinity norm of the eigenvectors}

\subsection{Small perturbation lemma}

$A_n$ is the adjacency matrix of $G(n,p)$. In the proofs of Theorem \ref{Delocal} and Theorem \ref{DelocalBulk}, we actually work with the eigenvectors of a perturbed matrix $$A_n+\epsilon N_n,$$ where $\epsilon=\epsilon(n) >0$ can be arbitrarily small and $N_n$ is a symmetric random matrix whose upper triangular elements are independent with a standard Gaussian distribution. 

The entries of $A_n+\epsilon N_n$ are continuous and thus with probability 1, the eigenvalues of $A_n+\epsilon N_n$ are simple. Let $$\mu_1 <\ldots <\mu_n$$ be the ordered eigenvalues of $A_n+\epsilon N_n$, which have a unique orthonormal system of eigenvectors $\{w_1,\ldots,w_n\}$. By the Cauchy interlacing principle, the eigenvalues of $A_n+\epsilon N_n$ are different from those of its principle minors, which satisfies a condition of Lemma \ref{EigenEntry}.

Let $\lambda_i$'s be the eigenvalue of $A_n$ with multiplicity $k_i$ defined as follows: $$\ldots \lambda_{i-1} < \lambda_{i}=\lambda_{i+1}=\ldots=\lambda_{i+k_i}<\lambda_{i+k_i+1}\ldots$$

By Weyl's theorem, one has for every $1\le j\le n$,

\begin{equation} \label{eq:weyl}
|\lambda_j - \mu_j | \le \epsilon || N_n ||_{\text{op}} = O(\epsilon \sqrt{n})
\end{equation}

Thus the behaviors of eigenvalues of $A_n$ and $A_n+\epsilon N_n$ are essentially the same by choosing $\epsilon$ sufficiently small. And everything (except Lemma \ref{EigenEntry}) we used in the proofs of Theorem \ref{Delocal} and Theorem \ref{DelocalBulk} for $A_n$ also applies for $A_n+\epsilon N_n$ by a continuity argument. We will not distinguish $A_n$ from $A_n+\epsilon N_n$ in the proofs.  

The following lemma will allow us to transfer the eigenvector delocaliztion results of $A_n+\epsilon N_n$ to those of $A_n$ at some expense.

\begin{lem}\label{lem:perturb}
In the notations of above, there exists an orthonormal basis of eigenvectors of $A_n$, denoted by $\{u_1,\ldots,u_n \}$, such that for every $1\le j \le n$, $$||u_j||_{\infty} \le || w_j ||_{\infty} +\alpha(n),$$
where $\alpha(n)$ can be arbitrarily small provided $\epsilon(n)$ is small enough. 
\end{lem}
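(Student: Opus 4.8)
The plan is to compare eigenspaces of $A_n$ and $A_n + \epsilon N_n$ directly, using that the two matrices are close in operator norm and that only the \emph{grouping} of eigenvalues (not their individual locations) causes trouble. First I would fix $\epsilon$ and recall from \eqref{eq:weyl} that $\|A_n - (A_n+\epsilon N_n)\|_{\mathrm{op}} = \epsilon\|N_n\|_{\mathrm{op}} = O(\epsilon\sqrt n)$, and that the $\mu_j$ are simple with orthonormal eigenvectors $w_1,\dots,w_n$. The key point is that $A_n$ may have eigenvalues of multiplicity $k_i > 1$, so there is no canonical choice of eigenbasis $\{u_j\}$; we get to \emph{choose} it, and we choose it to align with the $w_j$. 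Concretely, for each distinct eigenvalue $\lambda$ of $A_n$ with eigenspace $V_\lambda$ (of dimension $k$), let $P_\lambda$ be the orthogonal projection onto $V_\lambda$; I would set $u$'s within $V_\lambda$ to be an orthonormal basis obtained by applying Gram–Schmidt to the vectors $P_\lambda w_j$ for the indices $j$ whose $\mu_j$ lie closest to $\lambda$ (i.e. in a window of width $O(\epsilon\sqrt n)$ around $\lambda$, which by Weyl's inequality catches exactly $k$ of them).

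The second step is the quantitative estimate: I claim $\|P_\lambda w_j - w_j\|_2 = O(\epsilon\sqrt n / \mathrm{gap})$, where $\mathrm{gap}$ is the distance from $\lambda$ to the other eigenvalues of $A_n$. This is the standard spectral-projector perturbation bound (Davis–Kahan, or an elementary resolvent estimate: write $w_j$ in the $A_n$-eigenbasis, and note each component outside $V_\lambda$ is damped by $|\mu_j - \lambda'|^{-1}|\langle (A_n - (A_n+\epsilon N_n)) w_j, \cdot\rangle|$). The subtlety is that the gap can be tiny or zero — but this is exactly why I take $\epsilon \to 0$ \emph{after} fixing the graph: once $A_n$ is realized, its finitely many nonzero eigenvalue gaps are fixed positive numbers, so for $\epsilon$ small enough relative to the smallest such gap, $\|P_\lambda w_j - w_j\|_2$ is as small as we like. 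Then $\|u_j - w_j'\|_2$ is small for a suitable reordering/unitary rotation $w_j'$ of the $w_j$ within each cluster, and finally $\|u_j\|_\infty \le \|w_j'\|_\infty + \|u_j - w_j'\|_2 \le \|w_j'\|_\infty + \alpha(n)$ since the $\ell^\infty$ norm is dominated by the $\ell^2$ norm. Because the $w_j'$ are just the $w_j$ up to an orthonormal change of basis \emph{inside each eigenspace cluster}, and within a cluster of size $k$ we have $\|w_j'\|_\infty \le \sqrt{k}\max_i \|w_i\|_\infty$; to avoid the $\sqrt k$ loss one reorders so that $w_j'$ is matched to the nearest $w_j$, giving $\|w_j'\|_\infty = \|w_{\pi(j)}\|_\infty$ for a permutation $\pi$, hence the clean bound $\|u_j\|_\infty \le \|w_{\pi(j)}\|_\infty + \alpha(n)$, which after relabeling is the statement.

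The main obstacle is handling eigenvalue clusters of $A_n$ cleanly: one must make sure that the chosen $u_j$ inside a degenerate eigenspace really are orthonormal and really are each close to \emph{one} of the $w_j$ (not a nontrivial mixture), so that the $\ell^\infty$ bound transfers index-by-index rather than with a $\sqrt{k_i}$ loss. I would resolve this by observing that $\{P_\lambda w_j : \mu_j \text{ in the window}\}$ is, for small $\epsilon$, a near-orthonormal set spanning $V_\lambda$ (its Gram matrix is $I + O(\epsilon\sqrt n/\mathrm{gap})$), so Gram–Schmidt perturbs it only by $O(\epsilon\sqrt n/\mathrm{gap})$; thus each resulting $u_j$ differs from the corresponding $w_j$ by that much in $\ell^2$, uniformly over $j$, and we simply absorb the (graph-dependent but fixed) constant into the freedom to shrink $\epsilon(n)$, defining $\alpha(n)$ accordingly. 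Everything else — the operator norm bound on $N_n$, simplicity of the $\mu_j$ — is already supplied in the excerpt.
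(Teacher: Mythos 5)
Your argument is correct and follows essentially the same route as the paper: use Weyl's inequality to isolate the cluster of perturbed eigenvalues $\mu_j$ attached to each eigenspace $E$ of $A_n$, project the corresponding $w_j$ onto $E$, control the error by a Davis--Kahan/spectral-projector bound of size $\epsilon\|N_n\|_{\mathrm{op}}/\mathrm{gap}$, note that the projected vectors are near-orthonormal so Gram--Schmidt moves each of them only by $O(\alpha(n))$, and conclude via $\|\cdot\|_\infty\le\|\cdot\|_2$. The only real difference is how the gap is made quantitative: you fix it per realization of $A_n$ (``graph-dependent but fixed''), but since $\epsilon(n)$ must be chosen deterministically you should either take the minimum nonzero gap over the finitely many graphs on $n$ vertices or, as the paper does, invoke integrality of the characteristic polynomial to get a uniform lower bound $l(n)$ --- a one-line repair.
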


\begin{proof}
First, since the coefficients of the characteristic polynomial of $A_n$ are integers, there exists a positive function $l(n)$ such that either $|\lambda_s- \lambda_t|=0$ or $|\lambda_s-\lambda_t| \ge l(n)$ for any $1\le s,t \le n$.

By (\ref{eq:weyl}) and choosing $\epsilon$ sufficiently small, one can get 
$$|\mu_i- \lambda_{i-1}| > l(n) ~~ \text{and} ~~ | \mu_{i+k_i} -\lambda_{i+k_i+1}| > l(n)$$

For a fixed index $i$, let $E$ be the eigenspace corresponding to the eigenvalue $\lambda_i$ and $F$ be the subspace spanned by $\{w_i,\ldots,w_{i+k_i} \}$. Both of $E$ and $F$ have dimension $k_i$. Let $P_E$ and $P_F$ be the orthogonal projection matrices onto $E$ and $F$ separately.

Applying the well-known Davis-Kahan theorem (see \cite{SS} Section IV, Theorem 3.6) to $A_n$ and $A_n+\epsilon N_n$, one gets 
$$|| P_E- P_F ||_{\text{op} } \le \frac{\epsilon || N_n ||_{\text{op} }} {l(n)} := \alpha(n),$$
where $\alpha(n)$ can be arbitrarily small depending on $\epsilon.$

Define $v_j=P_F w_j \in E$ for $i \le j \le i+k_i$, then we have $||v_j- w_j||_2 \le \alpha(n)$. It is clear that $\{v_i,\ldots,v_{k_i}\}$ are eigenvectors of $A_n$ and 
$$||v_j||_{\infty} \le ||w_j||_{\infty} + ||v_j - w_j||_2 \le ||w_j||_{\infty} + \alpha(n).$$

By choosing $\epsilon$ small enough such that $n\alpha(n) < 1/2$, $\{v_i,\ldots,v_{k_i}\}$ are linearly independent. Indeed, if $\sum_{j=i}^{k_i} c_j v_j=0$, one has for every $i \le s \le i+k_i$, $\sum_{j=i}^{k_i} c_j \langle P_F w_j , w_s \rangle=0$, which implies $c_s = -\sum_{j=i}^{k_i} c_j \langle P_F w_j - w_j, w_s\rangle$. Thus 
$|c_s| \le \alpha(n) \sum_{j=i}^{k_i} |c_j|,$ summing over all $s$, we can get $\sum_{j=i}^{k_i} |c_j| \le k\alpha(n) \sum_{j=i}^{k_i} |c_j|$ and therefore $ c_j=0$.

Furthermore the set $\{v_i,\ldots,v_{k_i}\}$ is 'almost' an orthonormal basis of $E$ in the sense that 

\begin{equation*}
\begin{split}
|~ ||v_s||_2 -1 ~| & \le ||v_s- w_s||_2 \le \alpha(n)  ~~~~~\text{for any $i \le s \le i+k_i$ }\\
\\
|\langle v_s, v_t \rangle| &=|\langle P_F w_s, P_F w_t\rangle| \\
&=|\langle P_F w_s -w_s, P_F w_t\rangle + \langle w_s, P_F w_t - w_t\rangle | \\
&= O( \alpha(n) ) ~~~~~~~~\text{for any $i \le s\neq t \le i+k_i$ }\\
\end{split}
\end{equation*}

We can perform a Gram-Schmidt process on $\{v_i,\ldots,v_{k_i}\}$ to get an orthonormal system of eigenvectors $\{u_i,\ldots,u_{k_i} \}$ on $E$ such that $$||u_j||_{\infty} \le || w_j ||_{\infty} +\alpha(n),$$
for every $i \le j \le i+k_i$.

We iterate the above argument for every distinct eigenvalue of $A_n$ to obtain an orthonormal basis of eigenvectors of $A_n$.

\end{proof}

%\begin{remark}
%In a recent paper, Tao and Vu \cite{TVsingle} proved  that almost surely the eigenvalues of  $A_n$ and its principle minors are different, so whenever Lemma \ref{EigenEntry} is used, we assume that its assumption is satisfied. 
%\end{remark} 

%\subsection

\subsection{Auxiliary lemmas}

\begin{lem} 
\emph{(Lemma 41, \cite{tvrandom})}
\label{EigenEntry}
Let $$B_n=
\left(
\begin{array}{cc}
a & X^* \\
X & B_{n-1}
\end{array}
\right)
 $$
be a $n\times n$ symmetric matrix for some $a\in \mathbb{C}$ and $X \in \mathbb{C}^{n-1}$, and let 
$\left(
\begin{array}{cc}
x\\
v
\end{array}
\right)$ be a eigenvector of $B_n$ with eigenvalue $\lambda_i(B_n)$, where $x\in \mathbb{C}$ and $v \in \mathbb{C}^{n-1}$. Suppose that none of the eigenvalues of $B_{n-1}$ are equal to $\lambda_i(B_n)$. Then 
$$|x|^2= \frac{1}{1+\sum_{j=1}^{n-1} (\lambda_j (B_{n-1}) - \lambda_i(B_n))^{-2} |u_j (B_{n-1})^* X|^2},$$
where $u_j(B_{n-1})$ is a unit eigenvector corresponding to the eigenvalue $\lambda_j (B_{n-1}).$
\end{lem}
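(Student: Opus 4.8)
The plan is a direct computation exploiting the block structure. Write $\lambda:=\lambda_i(B_n)$ and split the eigenvector equation $B_n\binom{x}{v}=\lambda\binom{x}{v}$ into its two blocks:
\begin{align*}
a x + X^* v &= \lambda x,\\
x X + B_{n-1} v &= \lambda v.
\end{align*}
First I would work with the second block. By hypothesis no eigenvalue of $B_{n-1}$ equals $\lambda$, so $B_{n-1}-\lambda I$ is invertible, and rewriting the block as $(B_{n-1}-\lambda I)v=-xX$ yields $v=-x(B_{n-1}-\lambda I)^{-1}X$. (The first block is not needed; it is automatically consistent.)

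Next I would check that $x\neq 0$: if $x=0$ then $v\neq 0$ since the eigenvector is nonzero, and the second block becomes $B_{n-1}v=\lambda v$, forcing $\lambda$ to be an eigenvalue of $B_{n-1}$, contrary to assumption. Hence $x\neq 0$, and for the (unit-norm) eigenvector we have the genuine normalization $|x|^2+\|v\|_2^2=1$.

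Then I would expand $\|v\|_2^2$ using the spectral decomposition of $B_{n-1}$. With $\{u_j(B_{n-1})\}_{j=1}^{n-1}$ an orthonormal eigenbasis, $(B_{n-1}-\lambda I)^{-1}X=\sum_{j=1}^{n-1}(\lambda_j(B_{n-1})-\lambda)^{-1}\big(u_j(B_{n-1})^*X\big)\,u_j(B_{n-1})$, so orthonormality gives
$$\|v\|_2^2 = |x|^2 \sum_{j=1}^{n-1}\frac{|u_j(B_{n-1})^*X|^2}{(\lambda_j(B_{n-1})-\lambda)^2}.$$
Substituting this into $|x|^2+\|v\|_2^2=1$ gives $|x|^2\big(1+\sum_{j}(\lambda_j(B_{n-1})-\lambda)^{-2}|u_j(B_{n-1})^*X|^2\big)=1$, which is exactly the claimed identity after dividing.

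I do not expect a real obstacle here; the argument is elementary linear algebra. The only points requiring care are verifying $x\neq 0$ (so that the formula is not vacuous and the normalization is honest) and keeping track of the fact that the eigenvector is taken to be of unit $\ell^2$-norm, which is what turns the identity $|x|^2+\|v\|_2^2=1$ into the stated closed form; for a non-normalized eigenvector one would instead obtain the ratio $|x|^2/(|x|^2+\|v\|_2^2)$.
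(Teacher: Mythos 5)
Your proof is correct and is essentially the standard argument: the paper does not reprove this lemma (it cites Lemma 41 of Tao--Vu), and the proof there is exactly this block decomposition, solving $(B_{n-1}-\lambda I)v=-xX$, expanding in the eigenbasis of $B_{n-1}$, and using the unit normalization $|x|^2+\|v\|_2^2=1$. Your added checks that $x\neq 0$ and that the eigenvector must be taken unit-norm are the right points of care and match the intended reading of the statement.
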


The \textit{Stieltjes transform} $s_n(z)$ of a symmetric matrix $W$ is defined for $z \in \mathbb{C}$ by the formula 
$$s_n(z):=\frac{1}{n} \displaystyle\sum_{i=1}^{n} \frac{1}{\lambda_i(W)-z}.$$ It has the following alternate representation:

\begin{lem} 
\emph{(Lemma 39, \cite{tvrandom})}
\label{StieTran}
Let $W=(\zeta_{ij})_{1\le i,j\le n}$ be a symmetrix matrix, and let $z$ be a complex number not in the spectrum of $W$. Then we have 
$$s_n(z)=\frac{1}{n} \displaystyle\sum_{k=1}^{n} \frac{1}{\zeta_{kk}-z- a^*_k (W_k -zI)^{-1} a_k }$$
where $W_k$ is the $(n-1) \times (n-1)$ matrix with the $k^\text{th}$ row and column of $W$ removed, and $a_k \in \mathbb{C}^{n-1}$ is the $k^\text{th}$ column of $W$ with the $k^\text{th}$ entry removed.
\end{lem}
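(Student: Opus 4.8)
The plan is to express the Stieltjes transform $s_n(z) = \frac{1}{n}\sum_{i=1}^n (\lambda_i(W)-z)^{-1}$ as $\frac{1}{n}\mathrm{tr}\,(W-zI)^{-1}$, using the spectral decomposition of the Hermitian matrix $W$, and then to evaluate each diagonal entry $[(W-zI)^{-1}]_{kk}$ separately by a Schur complement (cofactor) computation. Since $z$ is not in the spectrum of $W$, the matrix $W-zI$ is invertible, and the identity $\sum_i (\lambda_i(W)-z)^{-1} = \mathrm{tr}\,(W-zI)^{-1}$ follows from the invariance of the trace under the unitary change of basis diagonalizing $W$. So it suffices to show, for each $1 \le k \le n$,
\begin{equation*}
[(W-zI)^{-1}]_{kk} = \frac{1}{\zeta_{kk} - z - a_k^*(W_k - zI)^{-1} a_k}.
\end{equation*}

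For a fixed $k$, I would permute rows and columns so that index $k$ is moved to the first position, writing $W - zI$ in block form with upper-left $1\times 1$ block $\zeta_{kk} - z$, off-diagonal blocks $a_k^*$ and $a_k$, and lower-right block $W_k - zI$ (this permutation is a conjugation by a permutation matrix, so it does not change the $(k,k)$ diagonal entry of the inverse, which becomes the $(1,1)$ entry of the inverse of the permuted matrix). Then the $(1,1)$ entry of the inverse of a block matrix
$\left(\begin{array}{cc} \alpha & b^* \\ b & D \end{array}\right)$
with $D$ invertible is, by the standard Schur complement formula for block inversion, equal to $(\alpha - b^* D^{-1} b)^{-1}$. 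Here one must check that the Schur complement $\zeta_{kk} - z - a_k^*(W_k - zI)^{-1}a_k$ is nonzero; this is automatic because $W_k - zI$ is invertible ($z$ is not an eigenvalue of $W_k$ — actually one does not even need this; one needs that $W-zI$ is invertible, and if the Schur complement vanished while $W_k - zI$ is invertible then $W - zI$ would be singular, a contradiction). Summing $[(W-zI)^{-1}]_{kk}$ over $k$ and dividing by $n$ gives the claimed formula.

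There is essentially no obstacle here: the only points requiring a word of care are (i) justifying that $W_k - zI$ is invertible so that the Schur complement formula applies — this needs the hypothesis that $z$ is not in the spectrum of $W$ together with the interlacing of eigenvalues of $W_k$ inside those of $W$, or alternatively a direct argument that a vanishing Schur complement would contradict invertibility of $W - zI$ — and (ii) the bookkeeping of the permutation that brings the $k$-th coordinate to the front. Both are routine, and since this is quoted as Lemma 39 of \cite{tvrandom} the proof is standard; I would simply present the block-inversion computation cleanly and cite \cite{tvrandom} for the original statement.
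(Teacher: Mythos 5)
The paper does not prove this lemma at all --- it is quoted verbatim as Lemma 39 of \cite{tvrandom} --- so your Schur-complement derivation (write $s_n(z)=\frac{1}{n}\mathrm{tr}\,(W-zI)^{-1}$, then compute each diagonal resolvent entry by block inversion after moving index $k$ to the front) is exactly the standard argument behind the cited result, and it is correct in substance.

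One caveat: your justification for the invertibility of $W_k-zI$ via interlacing is backwards. Cauchy interlacing places the eigenvalues of $W_k$ \emph{between} those of $W$; it does not keep them away from a real $z$ lying strictly between consecutive eigenvalues of $W$. For instance, with $W=\bigl(\begin{smallmatrix}0&1\\ 1&0\end{smallmatrix}\bigr)$ and $z=0$, $z$ is not in the spectrum of $W$ but is an eigenvalue of $W_1=(0)$, so $(W_k-zI)^{-1}$ does not exist and the displayed formula needs interpretation (by Cramer's rule $[(W-zI)^{-1}]_{kk}=\det(W_k-zI)/\det(W-zI)=0$, consistent with reading $a_k^*(W_k-zI)^{-1}a_k$ as $\infty$; note that $z\notin\mathrm{spec}(W)$ forces $a_k$ to have a nonzero component in the $z$-eigenspace of $W_k$, since otherwise extending that eigenvector by a zero in position $k$ would make $z$ an eigenvalue of $W$). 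Your second remark --- that a vanishing Schur complement would contradict invertibility of $W-zI$ --- handles the non-vanishing of the denominator, not the invertibility of $W_k-zI$, so it does not repair this point. The clean fixes are either to restrict to $\mathrm{Im}(z)\neq 0$, which is all the paper ever uses (in the proof of Lemma \ref{ConBulk} one has $\mathrm{Im}(z)\ge\eta>0$, and $W_k$ is Hermitian with real spectrum), or to state the degenerate case via the determinant identity. With that adjustment your proof is complete and matches the source's approach.
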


We begin with two lemmas that will be needed to prove the main results. The first lemma, following the paper \cite{tvrandom} in Appendix B, uses Talagrand's inequality. Its proof is presented in the Appendix \ref{appendix:projection}.

\begin{lem} 
\label{ConcenLem}
Let $Y=(\zeta_1,\ldots,\zeta_n) \in \mathbb{C}^n$ be a random vector whose entries are i.i.d. copies of the random variable $\zeta=\xi-p$ (with mean $0$ and variance $\sigma^2$). Let $H$ be a subspace of dimension $d$ and $\pi_H$ the orthogonal projection onto H. Then 

$$\textbf{P}(|\parallel \pi_H (Y) \parallel -\sigma \sqrt{d} |\ge t) \le 10 \exp(-\frac{t^2}{4}).$$
In particular,
\begin{equation} \label{eq:1.1}
\parallel \pi_H(Y) \parallel= \sigma \sqrt{d}+O( \omega(\sqrt{\log n}) )
\end{equation}
with overwhelming probability.
\end{lem}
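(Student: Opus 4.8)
The plan is to apply Talagrand's concentration inequality to the function $Y \mapsto \|\pi_H(Y)\|$, viewed as a function of the $n$ i.i.d.\ coordinates $\zeta_1,\dots,\zeta_n$. The key observations are that this map is convex (it is the composition of the linear map $\pi_H$ with the Euclidean norm, both convex) and $1$-Lipschitz with respect to the Euclidean metric on $\R^n$ (since $\pi_H$ is an orthogonal projection, $\|\pi_H(Y)\|-\|\pi_H(Y')\|\le \|\pi_H(Y-Y')\|\le \|Y-Y'\|$). Talagrand's inequality for convex Lipschitz functions of independent bounded random variables then gives concentration of $\|\pi_H(Y)\|$ around a median $\mathbf{m}$ at scale $O(1)$, namely $\P(|\,\|\pi_H(Y)\|-\mathbf{m}\,|\ge t)\le 4\exp(-t^2/4)$ or similar. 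One small point: the entries $\zeta=\xi-p$ are bounded (they lie in $[-p,1-p]\subset[-1,1]$), so Talagrand applies directly with an absolute constant; I would note this explicitly.

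Next I would identify the median (or first moment) with $\sigma\sqrt d$. Compute $\E\|\pi_H(Y)\|^2 = \E\, \mathrm{tr}(\pi_H Y Y^* \pi_H) = \mathrm{tr}(\pi_H\, \E(YY^*)\, \pi_H) = \sigma^2\,\mathrm{tr}(\pi_H) = \sigma^2 d$, using that $\E(YY^*)=\sigma^2 I$ since the $\zeta_i$ are i.i.d.\ with mean $0$, variance $\sigma^2$. So $\|\pi_H(Y)\|$ has $L^2$-norm exactly $\sigma\sqrt d$. Combining this with the concentration bound from Talagrand, a standard argument (the variance is $O(1)$, hence the median and the mean and the $L^2$-norm all differ by $O(1)$) upgrades the concentration around the median to concentration around $\sigma\sqrt d$ at the same scale, absorbing the $O(1)$ discrepancy into the constant in the exponent; this yields the stated inequality $\P(|\,\|\pi_H(Y)\|-\sigma\sqrt d\,|\ge t)\le 10\exp(-t^2/4)$ after adjusting constants.

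Finally, the ``in particular'' statement \eqref{eq:1.1} follows by taking $t = \omega(\sqrt{\log n})$ (any function tending to infinity faster than $\sqrt{\log n}$): then $10\exp(-t^2/4) = \exp(-\omega(\log n))$, which is exactly the definition of an event failing with less than overwhelming probability, so $\|\pi_H(Y)\| = \sigma\sqrt d + O(\omega(\sqrt{\log n}))$ with overwhelming probability. I expect the main obstacle to be purely bookkeeping: carefully stating the version of Talagrand's inequality being used and tracking the passage from median to $L^2$-norm so that the final constants ($10$ and $1/4$) come out as claimed; there is no serious analytic difficulty, since convexity and the $1$-Lipschitz property of orthogonal projection are immediate. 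This is why the detailed computation is deferred to Appendix \ref{appendix:projection}.
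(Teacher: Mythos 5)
Your proposal is correct and follows essentially the same route as the paper: Talagrand's inequality applied to the convex, $1$-Lipschitz map $Y\mapsto\|\pi_H(Y)\|$ of the bounded coordinates $\zeta\in[-p,1-p]$, combined with the exact computation $\E\|\pi_H(Y)\|^2=\sigma^2 d$ to locate the concentration point. The only (minor) divergence is how the median is tied to $\sigma\sqrt{d}$: you integrate the Talagrand tail to get $|M(\|\pi_H(Y)\|)-\sigma\sqrt{d}|=O(1)$ via the $L^2$ triangle inequality, whereas the paper pins the median directly by Chebyshev bounds on $S_1=\sum_i p_{ii}(\zeta_i^2-\sigma^2)$ and $S_2=\sum_{i\neq j}p_{ij}\zeta_i\zeta_j$, showing the events $\|\pi_H(Y)\|\ge\sigma\sqrt{d}+4$ and $\|\pi_H(Y)\|\le\sigma\sqrt{d}-4$ each have probability below $1/4$; both give the same $O(1)$ shift, and in either treatment the stated constants ($10$ and $1/4$) only come out after an absolute-constant adjustment in the exponent, which is all that the overwhelming-probability conclusion requires.
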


The following concentration lemma for $G(n,p)$ will be a key input to prove Theorem \ref{DelocalBulk}.
Let $B_n=\frac{1}{\sqrt{n}\sigma} A_n$
\begin{lem}[Concentration for ESD in the bulk]
\emph{(Concentration for ESD in the bulk)} 
\label{ConBulkAdj}
Assume $p={g(n)\log n}/{n}$. For any constants $\varepsilon, \delta > 0$ and any interval $I$ in $[-2+\varepsilon, 2-\varepsilon]$ of width $|I|=\Omega( {\log^{2.2} g(n) \log n}/{np} )$, the number of eigenvalues $N_I$ of $B_n$ in $I$ obeys the concentration estimate $$|N_I(B_n) - n \displaystyle\int_I {{}\rho}_{sc}(x)\,dx| \le {\delta} n |I|$$ with overwhelming probability. 
\end{lem}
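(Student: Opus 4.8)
The plan is to deduce Lemma \ref{ConBulkAdj} by applying the general concentration result, Lemma \ref{thm:ESD-con-general}, to a suitably normalized version of the adjacency matrix of $G(n,p)$, and then controlling the error incurred by the diagonal and the rank-one shift. Concretely, set $M_n = \frac{1}{\sigma}(A_n - pJ_n)$, whose off-diagonal entries $\xi_{ij} = (\xi - p)/\sigma$ are i.i.d. with mean zero, variance one, and $|\xi_{ij}| \le 1/\sigma = O(1/\sqrt{p}) =: K$. The fourth moment satisfies $M_4 = \E |\xi_{ij}|^4 = \E(\xi - p)^4/\sigma^4 = O(1/p) = O(n/(g(n)\log n)) = o(n)$, since $p = g(n)\log n/n$ with $g(n) \to \infty$. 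Thus the hypotheses of Lemma \ref{thm:ESD-con-general} hold, and the minimal interval length it demands is $\Omega(\delta'^{-2/3}(M_4/n)^{1/3}) = \Omega(\delta'^{-2/3}(1/(np))^{1/3})$. However, the statement of Lemma \ref{ConBulkAdj} asks for intervals of width $\Omega(\log^{2.2}g(n)\,\log n/(np))$ inside the bulk, so I should invoke the Remark following Corollary \ref{cor:ESDconvrate}: in the bulk case $I \subset [-2+\varepsilon, 2-\varepsilon]$ one has $\int_I \rho_{sc} = \alpha|I|$ (not $\alpha'|I|^{3/2}$), which lets one take $C = 4/\delta'$ and improves the required length to $\Omega((\log(np)/(\delta'^4(np)^{1/2}))^{1/4})$; this is comfortably smaller than the length $|I| = \Omega(\log^{2.2}g(n)\log n/(np))$ allowed in the lemma, so the hypothesis of the general result is met with room to spare. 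The resulting probability bound is $1 - O(\exp(-c\delta'^4 n^2|I|^5/K^2)) = 1 - O(\exp(-c\delta'^4 n^2 p |I|^5))$, and with $|I|$ at least of order $\log(np)^{1/4}/(np)^{1/8}$ one checks $n^2 p |I|^5 \ge n (np)^{1/2}\log(np) = \omega(\log n)$, i.e. the conclusion holds with overwhelming probability.

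Next I would translate the estimate on $\frac{1}{\sqrt n} M_n$ back to $B_n = \frac{1}{\sqrt n \sigma} A_n$. The difference $\frac{1}{\sqrt n}M_n - B_n = -\frac{p}{\sqrt n \sigma} J_n$ is a rank-one symmetric matrix, so by Lemma \ref{EigenDiff} the eigenvalue counting functions of $B_n$ and $\frac{1}{\sqrt n}M_n$ in any interval differ by at most $1$. Since $1 = o(n|I|)$ for $|I|$ of the stated order (indeed $n|I| \ge n\log n/(np) = \log n/p \to \infty$), this discrepancy is absorbed into the $\delta n|I|$ slack, at the cost of a harmless adjustment of $\delta$ (replace $\delta$ by $\delta/2$ at the start). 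Combining: with overwhelming probability, $|N_I(B_n) - n\int_I \rho_{sc}| \le |N_I(\tfrac{1}{\sqrt n}M_n) - n\int_I \rho_{sc}| + 1 \le \delta' n\int_I \rho_{sc} + 1 \le \delta' \alpha n|I| + 1 \le \delta n|I|$ after choosing $\delta'$ in terms of $\delta$ and $\alpha = \alpha(\varepsilon)$ and $n$ large.

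I expect the main obstacle to be bookkeeping rather than conceptual: one must verify carefully that, in the bulk regime, the improved interval-length threshold (the $|I|^{1/4}$ scaling from the Remark, as opposed to the $|I|^{1/3}$ from the general Lemma \ref{thm:ESD-con-general} applied edge-style) genuinely applies, and that the factor $\log^{2.2}g(n)$ appearing in the width hypothesis of Lemma \ref{ConBulkAdj} is not actually needed here — it is slack — so that the lemma follows a fortiori. The one subtlety worth double-checking is the dependence of the constants $c_1, c_2$ and the constant $C$ on $\delta$ and on $\varepsilon$ (through $\alpha(\varepsilon)$): we need the exponent $c\,\delta^4 n^2 p|I|^5$ to remain $\omega(\log n)$ uniformly, which holds because $\delta, \varepsilon$ are fixed constants and $n^2 p |I|^5 \to \infty$ polynomially in $n$. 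A final minor point: $\sigma^2 = p(1-p)$, and since $p \to 0$ we have $\sigma^2 = p(1+o(1))$, so $1/\sigma = \Theta(1/\sqrt p)$ and all the estimates above go through with the implied constants. Assembling these pieces yields the claimed overwhelming-probability concentration, which is exactly what is needed downstream to prove Theorem \ref{DelocalBulk}.
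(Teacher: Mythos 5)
Your reduction from $B_n$ to the centered matrix via Lemma \ref{EigenDiff} is fine (and matches what the paper does), but the main step fails: Corollary \ref{cor:ESDconvrate} and Lemma \ref{thm:ESD-con-general}, even with the bulk improvement from the Remark, cannot reach intervals as short as those in Lemma \ref{ConBulkAdj}. Write $d=np=g(n)\log n$. The bulk threshold you invoke is of order $\bigl(\log d/(\delta^4 d^{1/2})\bigr)^{1/4}\asymp (\log d)^{1/4} d^{-1/8}$, while the lemma must handle intervals of width as small as $\log^{2.2}g(n)\,\log n/d$. Your claim that the former is ``comfortably smaller'' than the latter amounts to $\log^{2.2}g(n)\,\log n \gtrsim d^{7/8}(\log d)^{1/4}$, i.e. $\log^{2.2}g(n)\,(\log n)^{1/8}\gtrsim g(n)^{7/8}(\log d)^{1/4}$, which holds only when $g(n)$ grows more slowly than roughly $(\log n)^{1/7}$. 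In the main regime the lemma is meant for — e.g. $p$ constant, where $g(n)\asymp n/\log n$ — the lemma demands widths of order $(\log n)^{3.2}/n$, whereas the Guionnet--Zeitouni route only controls scales of order $n^{-1/8}$ (up to logs). So the inequality goes the wrong way and the hypothesis of Lemma \ref{thm:ESD-con-general} is not met; the $\log^{2.2}g(n)$ factor is not slack that makes the lemma ``follow a fortiori,'' it is the signature of a genuinely finer, local-scale statement.

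This is precisely why the paper gives a separate proof (Appendix \ref{appendix:concentration}) rather than quoting Corollary \ref{cor:ESDconvrate}: it proves the analogous statement for $W_n=\frac{1}{\sqrt n}M_n$ (Lemma \ref{ConBulk}) by the Stieltjes transform method, showing $|s_n(z)-s(z)|\le\delta$ uniformly for $\operatorname{Im}z\ge\eta$ with $\eta=\log^2 g(n)\,\log n/(np)$, using the expansion of Lemma \ref{StieTran}, the Talagrand projection estimate of Lemma \ref{ConcenLem}, the crude eigenvalue-count bound of Lemma \ref{Lem1}, and a continuity argument, and then converts this to an eigenvalue-count estimate via Proposition \ref{ESDStie}; only at the very end does it pass from $W_n$ to $B_n$ by the rank-one Lemma \ref{EigenDiff}, as you do. To repair your argument you would have to replace the appeal to Lemma \ref{thm:ESD-con-general} by an argument that works at scale $\mathrm{polylog}/(np)$, which is essentially the Appendix C proof.
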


The above lemma is a variant of Corollary \ref{cor:ESDconvrate}. This lemma allows us to control the ESD on a smaller interval and the proof, relying on a projection lemma (Lemma \ref{ConcenLem}), is a different approach. The proof is presented in Appendix \ref{appendix:concentration}.

%\subsection

\subsection{Proof of Theorem \ref{Delocal}:}

Let $\lambda_n(A_n)$ be the largest eigenvalue of $A_n$ and $u=(u_1,\ldots,u_n)$ be the corresponding unit eigenvector. We have the lower bound $\lambda_n(A_n) \ge  np$. And if $np=\omega(\log n)$, then the maximum degree $\Delta = (1+o(1))np$ almost surely (See Corollary 3.14, \cite{bollobas}).

For every $1\le i \le n$, $$\lambda_n(A_n) u_i = \sum_{j \in N(i)} u_j,$$ 
where $N(i)$ is the neighborhood of vertex $i$. Thus, by Cauchy-Schwarz inequality,

$$|| u ||_{\infty}=\text{max}_i \frac{| \sum_{j \in N(i)} u_j |}{\lambda_n(A_n)} \le \frac{\sqrt{ \Delta } }{\lambda_n(A_n) } = O(\frac{1}{\sqrt{np} }).$$

Let $B_n=\frac{1}{\sqrt{n} \sigma} A_n$. Since the eigenvalues of $W_n=\frac{1}{\sqrt{n} \sigma } (A_n - p J_n)$ are on the interval $[-2,2]$, by Lemma \ref{EigenDiff}, $\{\lambda_1(B_n), \ldots, \lambda_{n-1}(B_n) \} \subset [-2,2] $. 

Recall that $np = g(n)\log n$. By Corollary \ref{cor:ESDconvrate}, for any interval $I$ with length at least $(\frac{\log (np)}{{\delta}^4 (np)^{1/2}})^{1/5}$(say $\delta=0.5$),with overwhelming probability, if $I  \subset [-2+\kappa,2-\kappa]$ for some positive constant $\kappa$, one has $N_I(B_n)= \Theta(n \int_{I} \rho_{sc}(x) dx) = \Theta(n |I|)$; if $I$ is at the edge of $[-2,2]$, with length $o(1)$, one has $N_I(B_n)=\Theta(n\int_{I} \rho_{sc}(x) dx) = \Theta(n |I|^{3/2})$. Thus we can find a set $J \subset \{1,\ldots,n-1\}$ with $|J| =\Omega( n |I_0|$) or $|J| =\Omega(n |I_0|^{3/2})$ such that $|\lambda_j(B_{n-1})-\lambda_i(B_n)| \ll |I_0|$ for all $j \in J$, where $B_{n-1} $ is the bottom right $(n-1) \times (n-1)$ minor of $B_n$. Here we take $|I_0|=(1/g(n)^{1/20})^{2/3}$. It is easy to check that $|I_0| \ge (\frac{\log (np)}{{\delta}^4 (np)^{1/2}})^{1/5}$.

By the formula in Lemma \ref{EigenEntry}, the entry of the eigenvector of $B_n$ can be expressed as 

\begin{equation} \label{eq:entry}
\begin{split}
|x|^2 &=\displaystyle\frac{1}{1+\sum_{j=1}^{n-1} (\lambda_j (B_{n-1}) - \lambda_i(B_n))^{-2} |u_j (B_{n-1})^* \frac{1}{\sqrt{n}\sigma}X|^2} \\
&\le \frac{1}{1+\sum_{j \in J} (\lambda_j (B_{n-1}) - \lambda_i(B_n))^{-2} |u_j (B_{n-1})^* \frac{1}{\sqrt{n}\sigma}X|^2} \\
&\le  \frac{1}{1+\sum_{j \in J} n^{-1}|I_0|^{-2} |u_j (B_{n-1})^* \frac{1}{\sigma}X|^2} = \frac{1}{1+ n^{-1}|I_0|^{-2} ||\pi_{H}(\frac{X}{\sigma})||^2}\\
&\le \frac{1}{1+{n^{-1}|I_0|^{-2}}{|J|}}
\end{split}
\end{equation}
with overwhelming probability, where $H$ is the span of all the eigenvectors associated to $J$ with dimension $\text{dim}(H)=\Theta(|J|)$, $\pi_{H}$ is the orthogonal projection onto $H$ and $X \in \mathbb{C}^{n-1}$ has entries that are iid copies of $\xi$.  The last inequality in (\ref{eq:entry}) follows from 
Lemma \ref{ConcenLem} (by taking $t=g(n)^{1/10}\sqrt{\log n}$) and the relations $$||\pi_H(X)||=||\pi_H(Y+p \bold{1}_n)|| \ge ||\pi_{H_1} (Y+p\bold{1}_n)|| \ge ||\pi_{H_1} (Y)|| .$$ Here $Y=X-p\bold{1}_n$ and $H_1=H \cap H_2$, where $H_2$ is the space orthogonal to the all 1 vector $\bold{1}_n$. For the dimension of $H_1$, $\text{dim}(H_1)\ge \text{dim}(H)-1$ .

\medskip

Since either $|J|=\Omega (n |I_0|$) or $|J| =\Omega( n |I_0|^{3/2})$,  we have ${n^{-1}|I_0|^{-2}}{|J|} =\Omega( {|I_0|}^{-1}$) or ${n^{-1}|I_0|^{-2}}{|J|} =\Omega( {|I_0|}^{-1/2}$). Thus $|x|^2 =O( |I_0|)$ or $|x|^2 =O( \sqrt{|I_0|})$. In both cases, since $|I_0| \rightarrow 0$, it follows that $|x|=o(1)$.
\hfill $\Box$

%%%%%%%%%%%%%%%%%%%%%%%%%%%%%%%%%%%%%%%%%%%%%%%%%%%%%%%%%%%%%%%%%%%%%%%%%%%%

\subsection{Proof of Theorem \ref{DelocalBulk}}
With the formula in Lemma \ref{EigenEntry}, it suffices to show the following lower bound

\begin{equation} \label{eq:1.11}
\sum_{j=1}^{n-1} (\lambda_j (B_{n-1}) - \lambda_i(B_n))^{-2} |u_j (B_{n-1})^* \frac{1}{\sqrt{n}\sigma}X|^2 \gg \frac{np}{\log^{2.2} g(n) \log n}
\end{equation}
with overwhelming probability, where $B_{n-1} $ is the bottom right $n-1 \times n-1$ minor of $B_n$ and $X \in \mathbb{C}^{n-1}$ has entries that are iid copies of $\xi$. Recall that $\xi$ takes values $1$ with probability $p$ and $0$ with probability $1-p$, thus $\mathbb{E} \xi=p, \mathbb{V}ar{\xi}=p(1-p)={\sigma}^2$.

\medskip
By Theorem \ref{ConBulkAdj}, we can find a set $J \subset \{1,\ldots,n-1\}$ with $|J| \gg \frac{\log^{2.2} g(n)\log n}{p}$ such that $|\lambda_j(B_{n-1})-\lambda_i(B_n)| =O(\log^{2.2} g(n)\log n/{np})$ for all $j \in J$. Thus in (\ref{eq:1.11}), it is enough to prove
$$\displaystyle\sum_{j \in J} |u_j(B_{n-1})^T \frac{1}{\sigma}X|^2= ||\pi_{H}(\frac{X}{\sigma})||^2 \gg  |J| $$
or equivalently
\begin{equation}
||\pi_{H}(X)||^2 \gg  {\sigma}^2 |J| 
\end{equation}
with overwhelming probability, where $H$ is the span of all the eigenvectors associated to $J$ with dimension $\text{dim}(H)=\Theta(|J|)$.

\medskip
Let $H_1=H\cap H_2$, where $H_2$ is the space orthogonal to $\bold{1}_n$. The dimension of $H_1$ is at least $\text{dim}(H)-1$.
Denote $Y=X- p \bold{1}_n$. Then the entries of $Y$ are iid copies of $\zeta$. By Lemma \ref{ConcenLem}, $$||\pi_{H_1}(Y)||^2 \gg {\sigma}^2 |J|$$ with overwhelming probability. 
%by taking $t^2=|J|p/4 \ge g(n)\log n/4$

\medskip
Hence, our claim follows from the relations
$$||\pi_H(X)||=||\pi_H(Y+p \bold{1}_n)|| \ge ||\pi_{H_1} (Y+p\bold{1}_n)|| = ||\pi_{H_1} (Y)||.$$
\hfill $\Box$

%%%%%%%%%%%%%%Appendix

\begin{appendices}

In this appendix, we complete the proofs of Theorem \ref{thm:SCL1}, Lemma \ref{ConcenLem} and Lemma \ref{ConBulkAdj}.

%appendix1

\section{Proof of Theorem \ref{thm:SCL1}}\label{appendix:SCL}
\medskip
We will show that the semicircle law holds for $M_n$. With Lemma \ref{EigenDiff}, it is clear that Theorem \ref{thm:SCL1} follows Lemma \ref{ESD} directly.
The claim actually follows as a special case discussed in the paper \cite{ch04spectra}. Our proof here uses a standard moment method.

\begin{lem}
\label{ESD}
For $p=\omega(\frac{1}{n})$, the empirical spectral distribution (ESD) of the matrix $W_n=\frac{1}{\sqrt{n}} M_n$ converges in distribution to the semicircle law which has a density ${{}\rho}_{sc}(x)$ with support on $[-2,2]$,  $${{\rho}}_{sc}(x) := \frac{1}{2 \pi } \sqrt{4 -x^2}.$$
\end{lem}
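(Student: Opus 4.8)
The plan is to prove Lemma \ref{ESD} by the moment method. Since $W_n = \frac{1}{\sqrt n} M_n$ where $M_n$ has independent (up to symmetry) entries with mean zero, variance one, and diagonal entries zero, I would show that for every fixed integer $k \ge 0$,
$$\lim_{n\to\infty} \E\left[ \frac{1}{n} \tr (W_n^k) \right] = m_k := \int_{-2}^2 x^k \rho_{sc}(x)\, dx,$$
where $m_{2\ell}$ is the $\ell$-th Catalan number $C_\ell = \frac{1}{\ell+1}\binom{2\ell}{\ell}$ and $m_{2\ell+1}=0$, together with a concentration statement showing $\frac1n\tr(W_n^k)$ does not deviate from its mean. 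Since the semicircle distribution is determined by its moments (its moments grow like $C_\ell \le 4^\ell$, so Carleman's condition holds), convergence of all moments in probability implies weak convergence of the ESD to $\rho_{sc}$ in probability, which is the desired conclusion.

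First I would expand
$$\E\left[\frac1n \tr(W_n^k)\right] = \frac{1}{n^{1+k/2}} \sum_{i_1,\dots,i_k} \E\left[ m_{i_1 i_2} m_{i_2 i_3}\cdots m_{i_k i_1} \right],$$
where $m_{ij}$ denotes the $(i,j)$ entry of $M_n$, and associate to each index tuple $(i_1,\dots,i_k)$ a closed walk of length $k$ on the complete graph on the vertex set $\{i_1,\dots,i_k\}$. Because the entries are independent and mean zero, a tuple contributes nothing unless every edge of the walk is traversed at least twice; the standard graph-theoretic bookkeeping then shows the dominant contribution (of order $n^{1+k/2}$, matching the normalization) comes, when $k=2\ell$ is even, from walks that use exactly $\ell$ distinct edges each exactly twice — equivalently, from double trees / non-crossing pair partitions — and these are counted by $C_\ell$; odd $k$ and all lower-order terms vanish in the limit. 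The only wrinkle relative to the classical Wigner computation is that here the variance of each entry is $1$ exactly (after normalization by $\sigma$), the diagonal is identically zero (which only removes degenerate walks and cannot increase the count), and — crucially — the entries are not uniformly bounded nor do they have uniformly bounded higher moments as $p\to 0$: one has $\E|m_{ij}|^r = O\big((p\sigma^2)^{-(r-2)/2}\big)$ roughly, i.e. $\E|m_{ij}|^r$ can blow up. I would handle this by checking that in the dominant double-tree terms only the second moment enters (which is $1$), and that the contribution of any walk using some edge $\ge 3$ times is bounded by $n^{v}$ times a product of moments $\E|m_{ij}|^{r}$ with $\sum (r - 2) $ compensating the loss of vertices, and that $n p \to\infty$ is exactly what makes these error terms $o(1)$; this is where the hypothesis $p = \omega(1/n)$ is used.

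For the concentration step I would show $\Var\big(\frac1n\tr(W_n^k)\big) = o(1)$, e.g. by the same type of walk expansion applied to $\E[(\tr W_n^k)^2] - (\E \tr W_n^k)^2$ — pairs of walks that share no edge cancel, and the remaining joined configurations are of lower order — or more cheaply by a martingale/bounded-differences argument exposing the rows of $M_n$ one at a time together with Lemma \ref{EigenDiff}-style interlacing to control the effect of a single row; either way $\frac1n\tr(W_n^k) \to m_k$ in probability. Combining with moment-determinacy of $\rho_{sc}$ gives $F^{W_n}_n \to F_{\rho_{sc}}$ in probability, and then Lemma \ref{EigenDiff} transfers the conclusion from $W_n$ to $\frac{1}{\sqrt n \sigma} A_n$ (since $A_n = \sigma\sqrt n\, W_n + p J_n$ and $p J_n$ has rank one), completing the proof of Theorem \ref{thm:SCL1}. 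The main obstacle is the sparse regime: unlike the bounded-entry Wigner case, the higher moments of the entries degenerate as $p\to 0$, so the bulk of the work is the careful power-counting showing that every walk with a thrice-used edge is genuinely lower order precisely when $np\to\infty$, and that the even-length double-tree walks contribute the Catalan numbers with no correction from the vanishing diagonal or the non-identical scaling.
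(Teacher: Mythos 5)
Your proposal follows essentially the same route as the paper: the moment method with a closed-walk expansion, identifying the dominant double-tree walks counted by Catalan numbers, and using the degenerating entry moments $\E|\eta|^{s}=O(p^{-(s-2)/2})$ together with $np\to\infty$ to kill all other contributions. The only difference is that you also spell out a variance/concentration step for $\frac1n\tr(W_n^k)$, which the paper's proof leaves implicit; this is a reasonable addition but does not change the argument.
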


Let ${\eta}_{ij} $ be the entries of $M_n={\sigma}^{-1} (A_n-p J_n)$.  For $i = j$, $\eta_{ij}=-p/\sigma$; and for $i\not= j$, $\eta_{ij}$ are iid copies of random variable $\eta$, which takes value $(1-p)/ \sigma$ with probability $p$ and takes value $-p/ \sigma$ with probability $1-p$. 

$$\textbf{E}\eta = 0, \textbf{E}\eta^2 =1, \textbf{E}\eta^s= O \left(\frac{1}{(\sqrt{p})^ {s-2}} \right)~\text{for}~ s\ge 2.$$

\medskip
For a positive integer $k$, the $k^{\text{th}}$ moment of ESD of the matrix $W_n$ is 
$$\displaystyle\int x^k dF_n^{W}(x)= \frac{1}{n} \textbf{E}( \text{Trace}({W_n}^k)),$$

and the $k^{\text{th}}$ moment of the semicircle distribution is 
$$\displaystyle\int_{-2}^{2} x^k \rho_{\text{sc}}(x) dx.$$

On a compact set, convergence in distribution is the same as convergence of moments. To prove the theorem, we need to show, for every fixed number $k$,

\begin{equation}\label{conver}
\frac{1}{n} \textbf{E}( \text{Trace}({W_n}^k)) \rightarrow \displaystyle\int_{-2}^{2} x^k \rho_{\text{sc}}(x) dx, \ \text{as}~ n \rightarrow \infty.
\end{equation}

For $k=2m+1$, by symmetry, $\displaystyle\int_{-2}^{2} x^k \rho_{\text{sc}}(x) dx=0$. 

For $k=2m$, 
\begin{equation*}
\begin{split}
\displaystyle\int_{-2}^{2} x^k \rho_{\text{sc}}(x) dx 
&= \frac{1}{\pi}\int_{0}^{2} x^k \sqrt{4-x^2} dx
= \frac{2^{k+2}}{\pi}\int_{0}^{\pi/2} {\sin^k{\theta}} {\cos^2{\theta}} dx\\
&= \frac{2^{k+2}}{\pi} \frac{\Gamma(\frac{k+1}{2})\Gamma(\frac{3}{2})}{\Gamma(\frac{k+4}{2})}
=\frac{1}{m+1} \dbinom{2m}{m}
\end{split}
\end{equation*}

Thus our claim (\ref{conver}) follows by showing that 

\begin{equation}\label{trace}
\displaystyle\frac{1}{n} \textbf{E}( \text{Trace}({W_n}^k)) =\left\{ \begin{array}{ll}
         O(\frac{1}{\sqrt{np}}) & \mbox{if $k = 2m+1$};\\
         \\
         \frac{1}{m+1} {{2m}\choose{m}} + O(\frac{1}{np}) & \mbox{if $k = 2m$}.\end{array} \right.
\end{equation}

We have the expansion for the trace of ${W_n}^k$, 
\begin{equation}\label{expan}
\begin{split}
\displaystyle\frac{1}{n} \textbf{E}( \text{Trace}({W_n}^k))
& =\frac{1}{n^{1+k/2}} \textbf{E}( \text{Trace}({\sigma}^{-1} M_n)^k )\\
& =\frac{1}{n^{1+k/2}} \sum_{1 \le i_1, \ldots, i_k \le n} \textbf{E} \eta_{i_1 i_2} \eta_{i_2 i_3} \cdots \eta_{i_k i_1}
\end{split}
\end{equation}

Each term in the above sum corresponds to a closed walk of length $k$ on the complete graph $K_n$ on $\{1,2, \ldots, n \}$. On the other hand, $\eta_{ij}$ are independent with mean 0. Thus the term is nonzero if and only if every edge in this closed walk appears at least twice. And we call such a walk a \emph{good} walk. Consider a \emph{good} walk that uses $l$ different edges $e_1, \ldots, e_l$ with corresponding multiplicities $m_1, \ldots, m_l$, where $l \le m$, each $m_h \ge 2$ and $m_1+\ldots+m_l = k$. Now the corresponding term to this \emph{good} walk has form $$\textbf{E} \eta_{e_1}^{m_1}\cdots \eta_{e_l}^{m_l}.$$

\medskip
Since such a walk uses at most $l+1$ vertices, a naive upper bound for the number of \emph{good} walks of this type is $n^{l+1} \times l^k$. 

\medskip
When $k=2m+1$, recall $\textbf{E}\eta^s= \Theta \left({(\sqrt{p})^ {2-s}} \right)~\text{for}~ s\ge 2$, and so

\begin{equation*}
\begin{split}
\displaystyle\frac{1}{n} \textbf{E}( \text{Trace}({W_n}^k))
& = \frac{1}{n^{1+k/2}} \sum_{l=1}^{m} \sum_{\text{\emph{good} walk of l edges}} \textbf{E} \eta_{e_1}^{m_1}\cdots \eta_{e_l}^{m_l}\\
& \le \frac{1}{n^{m+3/2}} \sum_{l=1}^{m} n^{l+1} l^k (\frac{1}{ \sqrt{p}})^{m_1-2}\ldots (\frac{1}{ \sqrt{p}})^{m_l-2}\\
& = O(\frac{1}{\sqrt{np}}). 
\end{split}
\end{equation*}

When $k=2m$, we classify the \emph{good} walks into two types. The first kind uses $l \le m-1$ different edges. The contribution of these terms will be 

\begin{equation*}
\begin{split}
\frac{1}{n^{1+k/2}} \sum_{l=1}^{m-1} \sum_{\text{1st kind of \emph{good} walk of l edges}} \textbf{E} \eta_{e_1}^{m_1}\cdots \eta_{e_l}^{m_l}
& \le \frac{1}{n^{1+m}} \sum_{l=1}^{m} n^{l+1} l^k (\frac{1}{ \sqrt{p}})^{m_1-2}\ldots (\frac{1}{ \sqrt{p}})^{m_l-2}\\
& = O(\frac{1}{{np}}). 
\end{split}
\end{equation*}

The second kind of \emph{good} walk uses exactly $l=m$ different edges and thus $m+1$ different vertices. And the corresponding term for each walk has form $$\textbf{E} \eta_{e_1}^{2}\cdots \eta_{e_l}^{2}=1.$$

The number of this kind of \emph{good} walk is given by the following result in the paper (\cite{bai08method}, Page 617--618):

\begin{lem}
The number of the second kind of \emph{good} walk is $$\displaystyle\frac{n^{m+1}(1+O(n^{-1}))}{m+1} \dbinom{2m}{m}.$$
\end{lem}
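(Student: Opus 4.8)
The plan is to exhibit an explicit bijection between the good walks of the second kind and pairs consisting of (i) a rooted plane (ordered) tree with $m$ edges and (ii) an injective labeling of its $m+1$ vertices by elements of $\{1,\dots,n\}$; then the count follows by multiplying the number of such trees (a Catalan number) by the number of labelings.

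First I would record the structural facts already half-stated in the text. A good walk of length $2m$ that uses exactly $l=m$ distinct edges has multiplicities $m_1,\dots,m_m$ with each $m_h\ge 2$ and $m_1+\dots+m_m=2m$; hence every $m_h=2$, i.e.\ each of the $m$ edges is traversed exactly twice. Since a closed walk visits a connected vertex set, and here it touches exactly $m+1$ vertices using $m$ distinct edges, the edge set spans a connected graph on $m+1$ vertices with $m$ edges, which is therefore a tree $T$. Root $T$ at the starting vertex $i_1$ of the walk, and order the children of each vertex by the order in which the walk first traverses the edge to them; this turns $T$ into a rooted plane tree.

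Next I would prove that the walk is \emph{determined} by this labeled rooted plane tree: at each step, if the current vertex still has a downward edge not yet used, the walk must take the first such edge (a first visit); otherwise it must backtrack along the edge to the parent. This is the standard depth-first / Dyck-path encoding — the sequence of ``down'' and ``up'' steps is a Dyck path of length $2m$ — and it is forced precisely because each of the $m$ edges occurs exactly twice in a \emph{closed} walk of total length exactly $2m$. Conversely, the DFS traversal of any rooted plane tree with $m$ edges, with vertices injectively labeled in $\{1,\dots,n\}$, produces a good walk of the second kind. The number of rooted plane trees with $m$ edges is the Catalan number $C_m=\frac{1}{m+1}\binom{2m}{m}$, and the number of injective labelings is $n(n-1)\cdots(n-m)=n^{m+1}(1+O(n^{-1}))$, giving the claimed value $\frac{n^{m+1}(1+O(n^{-1}))}{m+1}\binom{2m}{m}$.

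The main obstacle is the ``forcedness'' claim in the previous paragraph: one must rule out a good walk of the second kind that, say, traverses some edge in the pattern down--down--up--up in a non-DFS order, or more generally leaves a subtree before fully exploring it and returns later. I would handle this by induction on $m$: the first return of the walk to $i_1$ must occur at time $2m$ (otherwise the walk would split into two closed good walks on edge-disjoint subtrees, each of length $<2m$, forcing strictly fewer than $m$ edges or an edge used $<2$ times somewhere), and then recurse on the subtree hanging off the first edge used. Everything else — the Catalan count for plane trees and the asymptotics of the falling factorial — is routine.
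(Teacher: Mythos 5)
Two remarks before the main point. First, the paper does not actually prove this lemma: it is quoted from Bai's survey (\cite{bai08method}, pp.~617--618), and your bijection — good walk of the second kind $\leftrightarrow$ rooted plane tree with $m$ edges plus an injective labeling of its $m+1$ vertices, giving $C_m\cdot n(n-1)\cdots(n-m)=\frac{1}{m+1}\binom{2m}{m}\,n^{m+1}(1+O(n^{-1}))$ for fixed $m$ — is exactly the standard argument behind that citation, so the route is the right one. Your structural reductions are also correct: $l=m$ with all $m_h\ge 2$ and $\sum m_h=2m$ forces every edge to be used exactly twice, and $m$ distinct edges spanning $m+1$ distinct vertices force the edge set to be a tree (in particular there are no loops, so the deterministic diagonal entries $\eta_{ii}$ never enter); moreover, for a closed walk each tree edge is crossed an even number of times, hence here once in each direction.

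The genuine flaw is in your justification of the forcedness (DFS) step. The claim ``the first return of the walk to $i_1$ must occur at time $2m$'' is false: the walk $i_1\to a\to i_1\to b\to i_1$ is a good walk of the second kind with $m=2$ whose first return to $i_1$ is at time $2$, and the resulting splitting into two shorter closed good walks on edge-disjoint subtrees is not a contradiction — it is precisely the first-return (Catalan) recursion, so the parenthetical ``forcing strictly fewer than $m$ edges or an edge used $<2$ times'' does not follow. The correct repair is the excursion argument: if $e_1=\{i_1,v_1\}$ is the first edge used, then between its two traversals the walk cannot leave the component of $v_1$ in $T\setminus\{e_1\}$ (leaving would require a third crossing of $e_1$), and after the second traversal that component is sealed off, so every one of its edges must already have been used exactly twice during the excursion. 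Hence the first excursion is itself a closed good walk on that subcomponent, the remaining walk is a closed good walk on the complementary edge set meeting it only in $i_1$, and induction on $m$ shows every good walk of the second kind is the DFS traversal of its labeled plane tree (equivalently, the up/down pattern is a Dyck path). With that substitution your bijection is complete and the count $\frac{1}{m+1}\binom{2m}{m}\,n^{m+1}(1+O(n^{-1}))$ follows as you state.
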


Then the second conclusion of (\ref{conver}) follows. 

%appendix2

\section{ Proof of Lemma \ref{ConcenLem}:}\label{appendix:projection}

The coordinates of $Y$ are bounded in magnitude by $1$. Apply Talagrand's inequality to the map $Y \rightarrow ||\pi_H(Y)||$, which is convex and $1$-Lipschitz. We can conclude 

\begin{equation} \label{eq:1.2}
\textbf{P}(|\parallel \pi_H (Y) \parallel -M(\parallel \pi_H (Y) \parallel)| \ge t) \le 4 \exp(-\frac{t^2}{16})
\end{equation} 
where $M(\parallel \pi_H (Y) \parallel)$ is the median of $\parallel \pi_H (Y) \parallel$.

\medskip
Let $P=(p_{ij})_{1 \le i,j \le n}$ be the orthogonal projection matrix onto $H$. One has trace$P^2=$trace$P= \sum_i p_{ii}=d$ and $|p_{ii}| \le 1$, as well as,

$${\parallel \pi_H (Y) \parallel}^2 = \sum_{1 \le i,j \le n}^{} p_{ij} \zeta_i \zeta_j = \sum_{i=1}^{n} p_{ii} \zeta_{i}^2 + \sum_{i\neq j} p_{ij} \zeta_{i} \zeta_j$$

and 

$$\mathbf{E}{\parallel \pi_H (Y) \parallel}^2=\mathbf{E}(\sum_{i=1}^{n} p_{ii} \zeta_{i}^2)+\mathbf{E}(\sum_{i\neq j} p_{ij} \zeta_{i} \zeta_j)= \sigma^2 d.$$
\medskip
Take $L=4/\sigma$. To complete the proof, it suffices to show 
\begin{equation} \label{eq:1.3}
|M(\parallel \pi_H (Y) \parallel) - \sigma \sqrt{d}| \le L\sigma.
\end{equation}

Consider the event $\mathcal{E}_{+}$ that $\parallel \pi_H (Y) \parallel \ge \sigma L + \sigma \sqrt{d}$, which implies that ${\parallel \pi_H (Y) \parallel}^2 \ge \sigma^2(L^2 + 2L\sqrt{d} +d^2).$

\medskip
Let $S_1= \sum_{i=1}^{n}p_{ii} (\zeta_{i}^2-\sigma^2)$ and $S_2=\sum_{i \neq j}^{} p_{ij}\zeta_i \zeta_j$.

\medskip
Now we have 

$$\textbf{P} (\mathcal{E}_{+}) \le \textbf{P} (\sum_{i=1}^{n} p_{ii}\zeta_{i}^2 \ge \sigma^2 d + L\sqrt{d}\sigma^2) + \textbf{P} (\sum_{i \neq j}^{} p_{ij} \zeta_i \zeta_j \ge \sigma^2 L\sqrt{d}).$$

By Chebyshev's inequality, 

$$\textbf{P} (\sum_{i=1}^{n} p_{ii}\zeta_{i}^2 \ge \sigma^2 d + L\sqrt{d}\sigma^2) =\textbf{P} (S_1 \ge  L\sqrt{d}\sigma^2)) \le \frac{\textbf{E}(|S_1|^2)}{L^2 d \sigma^4},$$

where $\textbf{E}(|S_1|^2) = \textbf{E} (\sum_{i} p_{ii} (\zeta_{i}^2 -\sigma^2) )^2 = \sum_{i} p_{ii}^2 \textbf{E} (\zeta_i^4- \sigma^4) \le d\sigma^2(1-2\sigma^2)$.

\medskip
Therefore, $\textbf{P}(S_1 \ge L\sqrt{d}\sigma^4) \le \displaystyle\frac{d\sigma^2(1-2\sigma^2)}{L^2 d \sigma^4} < \frac{1}{16}.$

\medskip
On the other hand, we have $\textbf{E}(|S_2|^2)=\textbf{E}(\sum_{i\neq j}^{} p_{ij}^2 \zeta_i^2 \zeta_j^2) \le \sigma^4 d$ and 

$$\textbf{P} (\sum_{i \neq j}^{} p_{ij} \zeta_i \zeta_j \ge \sigma^2 L\sqrt{d}) = \textbf{P} (S_2 \ge L\sqrt{d} \sigma^2) \le \frac{\textbf{E}(|S_2|^2)}{L^2 d \sigma^4} < \frac{1}{10}$$

\medskip
It follows that $\textbf{E}(\mathcal{E}_{+}) < 1/4$ and hence $M(\parallel \pi_H (Y) \parallel) \le L\sigma +\sqrt{d} \sigma.$ 

\medskip
For the lower bound, consider the event $\mathcal{E}_{-} $ that $\parallel \pi_H (Y) \parallel \le \sqrt{d}\sigma-L\sigma$ and notice that 

$$\textbf{P}(\mathcal{E}_{-}) \le \textbf{P} (S_1 \le - L\sqrt{d} \sigma^2) + \textbf{P} (S_2 \le - L\sqrt{d} \sigma^2).$$

\medskip
The same argument applies to get $M(\parallel \pi_H (Y) \parallel) \ge \sqrt{d}\sigma-L\sigma$. Now the relations (\ref{eq:1.2}) and (\ref{eq:1.3}) together imply (\ref{eq:1.1}).

\endproof

%appendix3

\section{Proof of Lemma \ref{ConBulkAdj}: } \label{appendix:concentration}

Recall the normalized adjacency matrix
$$M_n=\frac{1}{\sigma}(A_n-p J_n),$$
where $J_n=\bold{1}_n \bold{1}^T_n $ is the $n \times n$ matrix of all $1$'s, and let  $W_n=\frac{1}{\sqrt{n}}M_n$.

\begin{lem} 
\label{Lem1}
For all intervals $I \subset \mathbb{R}$ with $|I| = \omega{(\log n)}/{np}$, one has 
$$N_I(W_n)=O(n|I|)$$ with overwhelming probability.

\end{lem}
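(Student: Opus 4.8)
\textbf{Proof proposal for Lemma \ref{Lem1}.}

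The plan is to bound $N_I(W_n)$ from above by a Stieltjes-transform argument combined with the projection concentration of Lemma \ref{ConcenLem}. The key observation is that, for any symmetric matrix $W$ and any $z = E + i\eta$ with $\eta > 0$,
\begin{equation*}
\operatorname{Im} s_n(z) = \frac{1}{n}\sum_{i=1}^n \frac{\eta}{(\lambda_i(W)-E)^2 + \eta^2} \ge \frac{1}{n}\cdot \frac{1}{\eta}\cdot\frac{1}{2}\, N_{[E-\eta,E+\eta]}(W),
\end{equation*}
so that controlling $N_I$ on an interval $I$ of length $\sim \eta$ reduces to an \emph{upper} bound on $\operatorname{Im} s_n(E+i\eta)$. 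I would therefore fix $\eta \asymp |I|$ and show that $\operatorname{Im} s_n(E+i\eta) = O(1)$ with overwhelming probability, uniformly in $E$ (a union bound over an $\eta$-net of $[-2,2]$ costs only a polynomial factor, absorbed by the overwhelming-probability estimate).

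To estimate $s_n(z)$ I would use the self-consistent representation in Lemma \ref{StieTran}: writing $W_k$ for the $k$-th minor and $a_k$ for the $k$-th column with the diagonal removed (here the entries of $a_k$ are, up to the scaling $\tfrac{1}{\sqrt n\sigma}$, i.i.d.\ copies of $\zeta = \xi - p$ after we move the rank-one $pJ_n$ term aside via Lemma \ref{EigenDiff}), we have
\begin{equation*}
s_n(z) = \frac{1}{n}\sum_{k=1}^n \frac{1}{\zeta_{kk} - z - a_k^*(W_k - z)^{-1}a_k}.
\end{equation*}
The quantity $a_k^*(W_k-z)^{-1}a_k$ expands in the eigenbasis $\{u_j(W_k)\}$ of $W_k$ as $\sum_j \frac{|u_j(W_k)^* a_k|^2}{\lambda_j(W_k)-z}$, whose imaginary part is $\eta\sum_j \frac{|u_j(W_k)^* a_k|^2}{(\lambda_j(W_k)-E)^2+\eta^2}$. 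For each dyadic band of eigenvalues $\lambda_j(W_k)$ within distance $\asymp 2^\ell\eta$ of $E$, the vector $a_k$ projected onto the corresponding eigenspace $H_\ell$ has norm $\|\pi_{H_\ell}(a_k)\|^2 = \Theta(\dim H_\ell)$ with overwhelming probability by Lemma \ref{ConcenLem}, \emph{provided} $\dim H_\ell = \omega(\log n)$ — and here is exactly where the hypothesis $|I| = \omega(\log n)/np$ enters: the scaling $K = 1/\sqrt p$ in the entries means the relevant error term in Lemma \ref{ConcenLem} is $O(\omega(\sqrt{\log n}))$ against a main term $\sigma\sqrt{\dim H_\ell}$, so one needs $\dim H_\ell \gg \log n / p$, which is guaranteed once $n|I| \gg \log n/p$. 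Summing the bands and using that $s_n$ itself controls $\sum_j \big((\lambda_j-E)^2+\eta^2\big)^{-1}$, one obtains a bound of the form $\operatorname{Im}(a_k^*(W_k-z)^{-1}a_k) = \Theta(\operatorname{Im} s_n(z)) + O(1)$, and feeding this into the self-consistent equation yields $\operatorname{Im} s_n(z) \le C$ with overwhelming probability; then the display above gives $N_I(W_n) = O(n\eta) = O(n|I|)$.

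The main obstacle will be making the last step rigorous without circularity: the bound on $\operatorname{Im}(a_k^*(W_k - z)^{-1} a_k)$ that comes out of Lemma \ref{ConcenLem} is itself phrased in terms of the local eigenvalue counts of $W_k$, i.e.\ essentially $\operatorname{Im} s_{n-1}$, so one must either run an interlacing/bootstrap argument (start from the trivial global bound $N_I \le n$, which gives $\operatorname{Im} s_n = O(1/\eta)$, and iterate to improve the exponent) or set up a single fixed-point inequality $y \le f(y)$ with $y = \operatorname{Im} s_n$ and argue the only admissible solutions are $O(1)$. A secondary technical point is handling the denominator $\zeta_{kk} - z - a_k^*(W_k-z)^{-1}a_k$: its imaginary part is $\le -\eta - \operatorname{Im}(a_k^*(W_k-z)^{-1}a_k) < 0$, so the modulus of the reciprocal is automatically $\le 1/\eta$, but to get the \emph{sharp} $O(1)$ bound one needs the real part of the denominator to not be atypically small for many $k$ simultaneously — this is where one uses that $E$ is generic (any single bad $E$ is killed by the net) together with the overwhelming-probability control of $\|\pi_{H_\ell}(a_k)\|$ across all $k$ at once, which is legitimate since there are only $n$ values of $k$ and each estimate holds with probability $1 - \exp(-\omega(\log n))$.
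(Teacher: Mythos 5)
Your plan contains the right ingredients (the lower bound $N_I\le O(n\eta)\,\operatorname{Im} s_n(x+i\eta)$, the expansion of Lemma \ref{StieTran}, and the projection estimate (\ref{eq:1.1}) of Lemma \ref{ConcenLem}), but as written it does not prove the lemma, and the unresolved point you flag at the end is in fact the whole content. You aim for the much stronger statement $\operatorname{Im} s_n(E+i\eta)=O(1)$ uniformly in $E$, and to get it you need two--sided control of $\operatorname{Im}\bigl(a_k^*(W_{n,k}-z)^{-1}a_k\bigr)$, which is ``essentially $\operatorname{Im} s_{n-1}$'' --- exactly the circularity you acknowledge and then leave to an unspecified bootstrap or fixed--point argument. (That kind of self--consistent control is what the harder Proposition \ref{YProp} delivers in the bulk, with extra input; it is neither available at this stage nor needed for this crude bound.) A second concrete flaw: your claim that every dyadic band $H_\ell$ satisfies $\dim H_\ell\gg\log n/p$ ``once $n|I|\gg\log n/p$'' is unjustified --- nothing forces any particular band to contain many eigenvalues, and when $\dim H_\ell$ is small Lemma \ref{ConcenLem} gives no useful lower bound on $\|\pi_{H_\ell}(a_k)\|$, so the band--by--band estimate $\operatorname{Im}\bigl(a_k^*(W_{n,k}-z)^{-1}a_k\bigr)=\Theta(\operatorname{Im} s_n)+O(1)$ does not follow.

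The missing idea, which is the route the paper imports from Tao--Vu together with (\ref{eq:1.1}), is to argue by contradiction on the fixed interval $I$ itself and use Cauchy interlacing, so that only a \emph{lower} bound on one projection is ever needed. Write $I=[x-\eta/2,\,x+\eta/2]$ with $\eta=|I|$, set $z=x+i\eta$, and suppose $N_I(W_n)\ge Cn|I|$ for a large constant $C$. By interlacing, each minor $W_{n,k}$ has at least $N_I(W_n)-1$ eigenvalues in $I$; let $H_k$ be the span of the corresponding eigenvectors, so $\dim H_k\ge Cn|I|-1=\omega(\log n)/p$, which is precisely the regime where (\ref{eq:1.1}) applies to $a_k$ (whose entries are $\tfrac1{\sqrt n\sigma}$ times iid copies of $\zeta$) and gives $\|\pi_{H_k}(a_k)\|^2\ge c\,\dim H_k/n\ge c' C|I|$ for all $k$ simultaneously with overwhelming probability. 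Then $\operatorname{Im}\bigl(a_k^*(W_{n,k}-z)^{-1}a_k\bigr)\ge \eta\|\pi_{H_k}(a_k)\|^2/(2\eta^2)\ge c''C$ for every $k$, so every term in Lemma \ref{StieTran} has modulus $O(1/C)$ and hence $\operatorname{Im} s_n(z)=O(1/C)$; on the other hand $N_I\ge Cn|I|$ forces $\operatorname{Im} s_n(z)\ge cC$, a contradiction once $C$ is large. A union bound over the $n$ indices $k$ and over a polynomial grid of intervals (using monotonicity of $N_I$) preserves overwhelming probability. This removes both the need for a uniform $O(1)$ bound on $\operatorname{Im} s_n$ and the circularity, and it is where the hypothesis $|I|=\omega(\log n)/np$ genuinely enters.
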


The proof of Lemma \ref{Lem1} uses the same proof as in the paper \cite{tvrandom} with the relation (\ref{eq:1.1}).

Actually we will prove the following concentration theorem for $M_n$. By Lemma \ref{EigenDiff}, $| N_I (W_n)-N_I(B_n) | \le 1$, therefore   Lemma \ref{ConBulk} implies Lemma \ref{ConBulkAdj}.

\begin{lem}
\label{ConBulk}
(Concentration for ESD in the bulk) Assume $p={g(n)\log n}/{n}$. For any constants $\varepsilon, \delta > 0$ and any interval $I$ in $[-2+\varepsilon, 2-\varepsilon]$ of width $|I|=\Omega( g(n)^{0.6}\log n/{np} )$, the number of eigenvalues $N_I$ of $W_n=\frac{1}{\sqrt{n}} M_n$ in $I$ obeys the concentration estimate $$|N_I(W_n) - n \displaystyle\int_I {{}\rho}_{sc}(x)\,dx| \le {\delta} n |I|$$ with overwhelming probability. 
\end{lem}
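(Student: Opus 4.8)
The plan is to follow the approach of Tao--Vu (\cite{tvrandom}), using the Stieltjes transform together with the interlacing/eigenvector-entry formula (Lemma \ref{EigenEntry}), and replacing the places where independence of all matrix entries or boundedness of moments is used by the projection estimate (\ref{eq:1.1}) from Lemma \ref{ConcenLem}. Write $s_n(z)$ for the Stieltjes transform of $W_n=\frac{1}{\sqrt n}M_n$ and let $z=E+i\eta$ with $E\in[-2+\varepsilon,2-\varepsilon]$ and $\eta$ a scale of order $|I|$. Recall that control of $N_I$ on an interval of width $|I|$ follows from control of $\Im s_n(E+i\eta)$ at scale $\eta\sim|I|$ (together with Lemma \ref{Lem1}, which already gives the crude upper bound $N_I=O(n|I|)$ needed to start the bootstrap, and which also controls the contribution from eigenvalues far from $E$). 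So it suffices to show that with overwhelming probability, for all such $z$, $s_n(z)=s_{sc}(z)+o(1)$, where $s_{sc}$ is the Stieltjes transform of the semicircle law.

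First I would invoke Lemma \ref{StieTran} to write
$$s_n(z)=\frac1n\sum_{k=1}^n\frac{1}{\zeta_{kk}-z-a_k^*(W_k-zI)^{-1}a_k},$$
where $W_k$ is the minor obtained by removing row/column $k$ and $a_k$ is the corresponding column. The diagonal term $\zeta_{kk}=-p/(\sigma\sqrt n)=o(1)$ is negligible. The main point is to show $a_k^*(W_k-zI)^{-1}a_k$ concentrates around $\frac1n\,\mathrm{tr}(W_k-zI)^{-1}=s_n(z)+o(1)$ (the eigenvalue interlacing between $W_n$ and $W_k$ makes the trace of the minor's resolvent agree with $s_n(z)$ up to $O(1/(n\eta))=o(1)$). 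Expanding $a_k^*(W_k-zI)^{-1}a_k$ in the eigenbasis $u_j(W_k)$ of $W_k$ gives $\sum_j (\lambda_j(W_k)-z)^{-1}|u_j(W_k)^*a_k|^2$. Here $a_k$ has entries that are (up to the shift by $p$ and scaling by $1/(\sigma\sqrt n)$) i.i.d.\ Bernoulli, and $u_j(W_k)$ is independent of $a_k$. Grouping the eigenvalues $\lambda_j(W_k)$ into $O(1/|I|)$ dyadic blocks around $E$ and applying Lemma \ref{ConcenLem} to the projection of $a_k$ onto the span of each block (exactly as in the derivation of (\ref{eq:entry}) in the proof of Theorem \ref{Delocal}), one gets that $\|\pi_{H_{\text{block}}}(a_k)\|^2$ equals $\frac{\dim H_{\text{block}}}{n}(1+o(1))$ with overwhelming probability, provided each block contains $\gg \log^{2.2}g(n)\log n/p$ eigenvalues — which is guaranteed precisely when $|I|=\Omega(g(n)^{0.6}\log n/np)$, since Lemma \ref{Lem1} gives $\Theta(n|I|)$ eigenvalues in a block of width $\sim|I|$ and we need $n|I|\gg \log^{2.2}g(n)\log n/p$, i.e.\ $|I|\gg\log^{2.2}g(n)\log n/(np)$. (The bookkeeping of the projected-shift term $p\mathbf 1$ is handled by the same $H_1=H\cap\mathbf 1_n^\perp$ trick as in the proofs above.) Summing the block contributions recovers $\frac1n\,\mathrm{tr}(W_k-zI)^{-1}$ up to $o(1)$.

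This yields the self-consistent equation $s_n(z)=-\frac{1}{z+s_n(z)}+o(1)$ with overwhelming probability, uniformly over a fine net of $z$'s in the bulk strip (and then for all $z$ by the Lipschitz continuity of $s_n$ in $z$ for $\Im z\gtrsim|I|$); stability of this equation away from the spectral edge — using $E\in[-2+\varepsilon,2-\varepsilon]$ so that $|z+2s_{sc}(z)|$ is bounded below — forces $s_n(z)=s_{sc}(z)+o(1)$, and taking imaginary parts and integrating in $E$ converts this into the claimed bound $|N_I(W_n)-n\int_I\rho_{sc}|\le\delta n|I|$. The main obstacle I anticipate is the last step of the concentration argument: controlling the contribution of the eigenvalues of $W_k$ that lie very close to $E$ (within the innermost dyadic block of width $\sim\eta$), where the resolvent weights $(\lambda_j-z)^{-1}$ are as large as $1/\eta$ and a naive second-moment bound is too weak; here one must use the a priori bound $N_{[E-\eta,E+\eta]}(W_k)=O(n\eta)$ from Lemma \ref{Lem1} to cap this block's contribution, and then run the estimate as a bootstrap, improving the $O(n|I|)$ bound to the sharp $(1\pm\delta)n\int_I\rho_{sc}$ only after the self-consistent equation has been established. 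The choice of the exponent $0.6$ (hence $\log^{2.2}$ after tracking the logarithmic losses in Lemma \ref{ConcenLem}, where $t=\omega(\sqrt{\log n})$ must be taken a polylog factor above threshold to make the union bound over $O(1/|I|)$ blocks and over the net of $z$'s overwhelming) comes out of balancing these error terms.
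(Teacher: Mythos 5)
Your proposal follows essentially the same route as the paper's own proof: reduce to showing $s_n(z)=s_{sc}(z)+o(1)$ in the bulk strip via the minor expansion of Lemma \ref{StieTran}, prove concentration of the quadratic form $a_k^*(W_{n,k}-zI)^{-1}a_k$ by applying the projection estimate of Lemma \ref{ConcenLem} over a dyadic decomposition of the eigenvalues of the minor (with Lemma \ref{Lem1} supplying the a priori count, including for the innermost block near $E$), and conclude by stability of the self-consistent equation away from the edge together with the Stieltjes-transform-to-ESD conversion (the paper cites Proposition \ref{ESDStie} for this last step). The sketch is sound and matches the paper's argument, down to the handling of the $p\mathbf{1}$ shift via $H\cap\mathbf{1}_n^{\perp}$ and the polylogarithmic bookkeeping in the choice of $t$ in Lemma \ref{ConcenLem}.
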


To prove Theorem \ref{ConBulk}, following the proof in \cite{tvrandom}, we consider the \textit{Stieltjes transform} 
$$s_n(z):=\frac{1}{n} \displaystyle\sum_{i=1}^{n} \frac{1}{\lambda_i(W_n)-z},$$ whose imaginary part
$$\text{Im} s_n(x+\sqrt{-1} \eta)=\frac{1}{n} \displaystyle\sum_{i=1}^{n} \frac{\eta}{\eta^2 + (\lambda_i(W_n)-x)^2}>0$$in the upper half-plane $\eta >0$.

\medskip
The semicircle counterpart
$$s(z):= \displaystyle\int_{-2}^{2} \frac{1}{x-z} \rho_{sc}(x)\,dx=\frac{1}{2\pi}\displaystyle\int_{-2}^{2} \frac{1}{x-z} \sqrt{4-x^2}\,dx,$$ is the unique solution to the equation $$s(z)+\frac{1}{s(z)+z}=0$$ with $\text{Im} s(z) >0$. 

\medskip
The next proposition gives control of ESD through control of Stieltjes transform (we will take $L=2$ in the proof):

\begin{prop}
\emph{(Lemma 60, \cite{tvrandom})}  
\label{ESDStie}
Let $L, \varepsilon, \delta >0$. Suppose that one has the bound $$|s_n(z)-s(z)| \le \delta$$ with (uniformly) overwhelming probability for all $z$ with $|\text{Re}(z)| \le L$ and $\text{Im}(z) \ge \eta$. Then for any interval $I$ in $[-L+\varepsilon, L-\varepsilon]$ with $|I| \ge \text{max}(2\eta, \frac{\eta}{\delta} \log \frac{1}{\delta})$, one has $$|N_I- n \displaystyle\int_I {\rho}_{sc}(x)\,dx| \le \delta n |I|$$ with overwhelming probability. 
\end{prop}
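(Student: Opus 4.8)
The plan is to interpolate between $\tfrac1n N_I=\mu_n(I)$, where $\mu_n=\tfrac1n\sum_i\delta_{\lambda_i(W_n)}$, and $\int_I\rho_{sc}$ through their regularizations at scale $\eta$. Since $\tfrac1\pi\,\mathrm{Im}\,s_n(x+i\eta)=(\mu_n*P_\eta)(x)$ with $P_\eta(t)=\tfrac1\pi\tfrac{\eta}{\eta^2+t^2}$, and $\tfrac1\pi\,\mathrm{Im}\,s(x+i\eta)=(\rho_{sc}*P_\eta)(x)$, integrating the hypothesis $|\mathrm{Im}(s_n-s)(x+i\eta)|\le|s_n(z)-s(z)|\le\delta$ over $x\in I$ shows that $\tfrac1\pi\int_I\mathrm{Im}\,s_n(x+i\eta)\,dx$ and $\tfrac1\pi\int_I\mathrm{Im}\,s(x+i\eta)\,dx$ differ by at most $\delta|I|/\pi$ with overwhelming probability. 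It therefore suffices to prove that each of these two regularized quantities is within $O(\delta|I|)$ of $\mu_n(I)$, resp.\ of $\int_I\rho_{sc}$; replacing $\delta$ by $c\delta$ at the end is harmless since the statement is for all $\delta>0$ (and one applies it with $L=2$).

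Next I would record an a priori density bound, which is the only probabilistic input beyond the hypothesis. For $|x|\le L$ and any $\eta'\ge\eta$ the hypothesis gives $\mathrm{Im}\,s_n(x+i\eta')\le\mathrm{Im}\,s(x+i\eta')+\delta\le\pi\|\rho_{sc}\|_\infty+\delta=O(1)$, and $\mathrm{Im}\,s_n(x+i\eta')\ge\tfrac{1}{2n\eta'}\#\{j:|\lambda_j(W_n)-x|\le\eta'\}$, so $\#\{j:|\lambda_j(W_n)-x|\le t\}=O(nt)$ for all $t\ge\eta$, $|x|\le L$, with overwhelming probability (in the spirit of Lemma \ref{Lem1}); the same holds trivially with $\rho_{sc}$ in place of $\mu_n$.

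Then I would estimate the smoothing error. Writing $\tfrac1\pi\int_a^b\mathrm{Im}\,s_n(x+i\eta)\,dx=\int g_\eta\,d\mu_n$ with $g_\eta(y)=\tfrac1\pi\bigl(\arctan\tfrac{b-y}{\eta}-\arctan\tfrac{a-y}{\eta}\bigr)$, an elementary computation gives $|g_\eta(y)-\mathbf 1_{(a,b)}(y)|\le\min\bigl(1,\tfrac{C\eta}{r}\bigr)$ for $r:=\mathrm{dist}(y,\{a,b\})\le|I|$ and $\le C\eta|I|/r^2$ for $r>|I|$. Splitting $\int|g_\eta-\mathbf 1_{(a,b)}|\,d\mu_n$ by the value of $r$: the range $r\le\eta$ contributes $O(\eta)$ (integrand $O(1)$, mass $O(\eta)$); the range $\eta<r\le\eta/\delta$ splits dyadically into only $O(\log\tfrac1\delta)$ scales, each contributing $O(\eta)$ by the density bound, hence $O(\eta\log\tfrac1\delta)$; for $r>\eta/\delta$ the integrand is $O(\delta)$ and the relevant mass inside $[-L,L]$ is $O(|I|)$, giving $O(\delta|I|)$, while the far field together with all eigenvalues outside $[-L,L]$ (which sit at distance $\ge\varepsilon$ from $I$) contribute only $O(\eta)$. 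The total is $O(\eta\log\tfrac1\delta+\delta|I|)=O(\delta|I|)$ precisely because $|I|\ge\max(2\eta,\tfrac\eta\delta\log\tfrac1\delta)$. The same computation with $\|\rho_{sc}\|_\infty<\infty$ in place of the density bound gives $|\tfrac1\pi\int_I\mathrm{Im}\,s(x+i\eta)\,dx-\int_I\rho_{sc}|=O(\delta|I|)$, and combining the three estimates finishes the proof. (One could instead route this through the Helffer--Sj\"ostrand formula applied to an $\eta$-mollified approximation of $\mathbf 1_I$, but the density bound is still needed for the part of the contour at height below $\eta$.)

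The main obstacle is exactly this boundary-layer estimate: one has to see that the transition region near the endpoints of $I$, where $g_\eta$ is neither close to $0$ nor to $1$, carries $\mu_n$-mass only $O(\eta\log\tfrac1\delta)$. The dyadic sum over $\eta<r\le\eta/\delta$ must be done carefully — the switch from the pointwise bound $C\eta/r$ to the uniform bound $O(\delta)$ at the scale $r\sim\eta/\delta$ is what produces the logarithm, so a cruder bound would not match the hypothesized $|I|\ge\tfrac\eta\delta\log\tfrac1\delta$. By comparison, the eigenvalues the hypothesis does not control are harmless (they are $\varepsilon$-separated from $I$), and maintaining overwhelming probability is routine, since the target event follows from the Stieltjes bound on a polynomially fine net of $z$ together with the Lipschitz continuity of $s_n$ off the spectrum.
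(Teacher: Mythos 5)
Your proposal is correct and is essentially the standard argument for this statement: the paper itself gives no proof of Proposition \ref{ESDStie} (it is quoted as Lemma 60 of \cite{tvrandom}), and the proof there runs exactly along your lines --- identify $\tfrac1\pi\,\mathrm{Im}\,s_n(\cdot+i\eta)$ with the Poisson-smoothed ESD, extract the local density bound $\#\{j:|\lambda_j-x|\le t\}=O(nt)$ for $t\ge\eta$ from the hypothesis itself, and run a dyadic decomposition around the endpoints of $I$, which is precisely what produces the $\eta\log\tfrac1\delta$ boundary-layer term that the assumption $|I|\ge\tfrac\eta\delta\log\tfrac1\delta$ is designed to absorb. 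The only cosmetic discrepancy is that the argument yields $O_\varepsilon(\delta n|I|)$ rather than literally $\delta n|I|$ (and the ``replace $\delta$ by $c\delta$'' remark does not quite repair this, since the hypothesis is also tied to $\delta$), but this matches the $\ll_\varepsilon$ form of the original lemma in \cite{tvrandom} and is immaterial for the way the proposition is used in this paper.
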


By Proposition \ref{ESDStie}, our objective is to show 

\begin{equation}
|s_n(z)-{s}(z)| \le {\delta}
\end{equation}
with (uniformly) overwhelming probability for all $z$ with $|\text{Re}(z)| \le 2$ and $\text{Im}(z) \ge {\eta}$, where $$\eta =\frac{\log^2 g(n) \log n}{np}.$$

In Lemma \ref{StieTran}, we write 

\begin{equation} \label{eq:1.6}
s_n(z)= \frac{1}{n} \displaystyle{\sum_{k=1}^{n} \frac{1}{-\frac{{\zeta}_{kk}}{ \sqrt{n}\sigma}-z- Y_k }}
\end{equation}
where $$Y_k=a^*_k (W_{n,k} -zI)^{-1} a_k,$$ 
$W_{n,k}$ is the matrix $W_n$ with the $k^{\text{th}}$ row and column removed, and $a_k$ is the $k^{\text{th}}$ row of $W_n$ with the $k^{\text{th}}$ element removed. 

\medskip
The entries of $a_k$ are independent of each other and of $W_{n,k}$, and have mean zero and variance $1/n$. By linearity of expectation we have $$\mathbf{E}(Y_k|W_{n,k})=\frac{1}{n}\text{Trace}(W_{n,k}-zI)^{-1}=(1-\frac{1}{n})s_{n,k}(z)$$ where $$s_{n,k}(z)= \frac{1}{n-1} \displaystyle{\sum_{i=1}^{n-1} \frac{1}{\lambda_i (W_{n,k}) -z}}$$ is the \textit{Stieltjes transform} of $W_{n,k}$. From the Cauchy interlacing law, we get$$\displaystyle{|{} s_n(z)- (1-\frac{1}{n}) {} s_{n,k}(z)|= O(\frac{1}{n} \int_{\mathbb{R}} \frac{1}{|x-z|^2}\,dx) =O(\frac{1}{n\eta})}=o(1),$$ 
and thus $$\mathbf{E}(Y_k|W_{n,k})=s_n(z)+o(1).$$

In fact a similar estimate holds for $Y_k$ itself:
\begin{prop}
\label{YProp}
For $1 \le k \le n$, $Y_k= \mathbf{E}(Y_k|W_{n,k}) +o(1)$ holds with (uniformly) overwhelming probability for all $z$ with $|\text{Re}(z)| \le 2$ and $\text{Im}(z) \ge {\eta}$.
\end{prop}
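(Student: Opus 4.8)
\textbf{Proof proposal for Proposition \ref{YProp}.}

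The plan is to show that $Y_k = a_k^*(W_{n,k}-zI)^{-1}a_k$ concentrates around its conditional expectation $\mathbf{E}(Y_k|W_{n,k}) = \frac{1}{n}\mathrm{Trace}(W_{n,k}-zI)^{-1}$ by conditioning on $W_{n,k}$ and applying a quadratic-form concentration estimate to the randomness in $a_k$. First I would condition on $W_{n,k}$, diagonalize $(W_{n,k}-zI)^{-1} = \sum_{i=1}^{n-1} (\lambda_i(W_{n,k})-z)^{-1} u_i u_i^*$ where $u_i = u_i(W_{n,k})$, and write $Y_k = \sum_{i=1}^{n-1}(\lambda_i(W_{n,k})-z)^{-1}|u_i^* a_k|^2$. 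Since the entries of $a_k$ are the $k$-th column of $W_n$ (off-diagonal), they are $\frac{1}{\sqrt n \sigma}$ times i.i.d. copies of $\zeta = \xi - p$, with mean zero and variance $1/n$; crucially, each entry is bounded in magnitude by $\frac{1}{\sqrt n \sigma}\cdot\max(1-p,p) = O(\frac{1}{\sqrt{np}})$ since $\sigma^2 = p(1-p)$. This boundedness is exactly what lets us invoke the projection concentration from Lemma \ref{ConcenLem} (equivalently, Talagrand's inequality for the convex Lipschitz map $a_k \mapsto \|\pi_{H_S} a_k\|$).

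The key step is a dyadic decomposition of the spectrum of $W_{n,k}$ relative to $z = x + \sqrt{-1}\eta$. I would split the index set $\{1,\dots,n-1\}$ into blocks $S_m$ where $|\lambda_i(W_{n,k}) - x| \asymp 2^m \eta$, so that on $S_m$ the factor $(\lambda_i - z)^{-1}$ has magnitude $\asymp (2^m\eta)^{-1}$ and is essentially constant. On each block, $\sum_{i\in S_m}|u_i^*a_k|^2 = \|\pi_{H_{S_m}}a_k\|^2$, and by Lemma \ref{ConcenLem} this equals $\frac{1}{n}|S_m| + O\big(\frac{1}{\sqrt{np}}\cdot\omega(\sqrt{\log n})\cdot\frac{1}{\sqrt n}\big)$ with overwhelming probability, where $|S_m| = N_{I_m}(W_{n,k})$ with $I_m$ the corresponding interval. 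By Lemma \ref{Lem1}, $N_{I_m}(W_{n,k}) = O(n\cdot 2^m\eta)$ whenever $2^m\eta = \omega(\log n/np)$, which holds because $\eta = \log^2 g(n)\log n/(np) = \omega(\log n/np)$. Summing the fluctuation over the $O(\log n)$ dyadic blocks, weighted by $(2^m\eta)^{-1}$, and using that $\sum_m (2^m\eta)^{-1}\cdot O(\sqrt{n\cdot 2^m\eta}/(\sqrt{np}))$ telescopes geometrically, one gets a total error that is $o(1)$ — the gain coming precisely from the extra factor of $\log^2 g(n)$ in the definition of $\eta$ compared to the threshold in Lemma \ref{Lem1}. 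Near $x$ itself (the block with $2^m\eta \lesssim \eta$, i.e. $|\lambda_i - x|\lesssim\eta$) one uses that $|(\lambda_i-z)^{-1}|\le 1/\eta$ together with the bound $N_{[x-\eta,x+\eta]} = O(n\eta)$, and again Lemma \ref{ConcenLem} controls the corresponding $\|\pi_H a_k\|^2 = O(\eta) + o(\eta)$; multiplying by $1/\eta$ gives $O(1)$ deterministically but the fluctuation part is $o(1)$.

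The main obstacle I anticipate is bookkeeping the error terms across all dyadic scales simultaneously and uniformly in $z$: one needs the concentration to hold with overwhelming probability for a single $z$, then pass to a net of $z$'s in the region $\{|\mathrm{Re}(z)|\le 2,\ \mathrm{Im}(z)\ge\eta\}$ of polynomial size and use the Lipschitz dependence of $s_n, s_{n,k}, Y_k$ on $z$ (with Lipschitz constant $O(1/\eta^2)$, polynomial in $n$) to upgrade to uniformity — the "uniformly overwhelming" claim in the statement. A secondary technical point is handling the imaginary part: since $\mathrm{Im}(\lambda_i - z)^{-1}$ need not have constant sign issues, I would treat real and imaginary parts separately, but both reduce to the same weighted sum of projection norms. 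Once $Y_k = \mathbf{E}(Y_k|W_{n,k}) + o(1) = s_n(z) + o(1)$ is established, substituting into \eqref{eq:1.6} and using $\zeta_{kk}/(\sqrt n\sigma) = -p/(\sqrt n\sigma) = o(1)$ yields the self-consistent equation $s_n(z) = \frac{1}{-z - s_n(z)} + o(1)$, which by stability of the semicircle fixed-point equation (and $\mathrm{Im}\, s_n > 0$) forces $|s_n(z) - s(z)|\le\delta$, completing the chain down to Lemma \ref{ConBulk} via Proposition \ref{ESDStie}.
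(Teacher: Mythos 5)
Your overall skeleton matches the paper's proof (condition on $W_{n,k}$, spectral decomposition of the resolvent, projection concentration via Lemma \ref{ConcenLem}, eigenvalue counting via Lemma \ref{Lem1}, then sum over scales around $x=\mathrm{Re}(z)$), but the specific scale decomposition you choose does not close the argument, and this is where the real work of the paper lies. On a dyadic block $S_m$ with $|\lambda_i-x|\asymp 2^m\eta =: a$, the weight $(\lambda_i-z)^{-1}$ is \emph{not} essentially constant: it varies across the block by an amount $\Theta(1/a)$, i.e.\ comparable to its own magnitude. The cancellation coming from Lemma \ref{ConcenLem} only controls the centered sum $\sum_{j\in S_m}R_j$ with $R_j=|u_j^*a_k|^2-\tfrac1n$; the part of the block sum coming from the \emph{variation} of the weight can only be bounded through $\sum_{j\in S_m}|R_j|\ll \tfrac1n\bigl(|S_m|+h(n)^2\log n/p\bigr)$, and since $|S_m|=O(na)$ by Lemma \ref{Lem1}, this contributes $\Theta(1/a)\cdot O(a)=O(1)$ \emph{per block}. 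With $\Theta(\log)$ dyadic blocks the total is unbounded, not $o(1)$; the extra $\log^2 g(n)$ in $\eta$ helps the $h(n)^2\log n/p$ terms but does nothing for this weight-variation term. The same problem occurs in your innermost region: taking it of width $\eta$ with $N=O(n\eta)$ gives $\tfrac1\eta\sum|R_j|=O(1)$, and you cannot discard the "deterministic" $|J|/n\approx\eta$ part of $\sum|R_j|$, because the fluctuating phase of $(\lambda_j-z)^{-1}$ inside the region prevents you from exploiting the cancellation in $\sum_j R_j$ alone.

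The paper's proof fixes exactly these two points by introducing the small parameters $\alpha=\log^{-4/3}g(n)$ and $\beta=\log^{-1/3}g(n)$: the innermost region is taken of width $\beta\eta$ (so the crude bound gives $O(\beta)=o(1)$ rather than $O(1)$), and the remaining spectrum is sliced with multiplicative ratio $1+\alpha$ rather than $2$, so that on each slice the weight fluctuates only by $O(\alpha/a)$; the constant part of the weight is handled by the cancellation bound on $\sum_{j\in J}R_j$ from Lemma \ref{ConcenLem}, the fluctuation part costs only $O(\alpha^2)+\cdots$ per slice, and summing over the $\approx\alpha^{-1}\log\frac{1}{\beta\eta}$ slices gives $O\bigl(\tfrac{h(n)}{\sqrt{\alpha\beta}\,\log g(n)}+\alpha\log\tfrac{1}{\beta\eta}\bigr)=O\bigl(h(n)/\log^{1/6}g(n)\bigr)=o(1)$ by the choice of $\alpha,\beta,h$. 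So to repair your proposal you must replace the dyadic mesh and the width-$\eta$ core by this finer, $g(n)$-dependent mesh (or an equivalent device separating the constant and oscillating parts of the weight); as written, the key summation step would fail. Your remarks on uniformity in $z$ via a polynomial net and $O(1/\eta^2)$-Lipschitz dependence are fine and standard.
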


Assume this proposition for the moment. By hypothesis, $|\frac{{\zeta}_{kk}}{ \sqrt{n}\sigma}|=|\frac{-p}{\sqrt{n}\sigma}|=o(1)$. Thus in (\ref{eq:1.6}), we actually get

\begin{equation}
{} s_n(z) + \frac{1}{n} \displaystyle{\sum_{k=1}^{n} \frac{1}{s_n(z) +z + o(1)}}=0
\end{equation}

with overwhelming probability. This implies that with overwhelming probability either $s_n(z)=s(z)+o(1)$ or that $s_n(z)=-z+o(1)$. On the other hand, as Im$s_n(z)$ is necessarily positive, the second possibility can only occur when Im$z=o(1)$. A continuity argument (as in \cite{erdos09local}) then shows that the second possibility cannot occur at all and the claim follows.

\medskip

Now it remains to prove Proposition \ref{YProp}.

\medskip
{\bf Proof of Proposition \ref{YProp}.} Decompose $$Y_k=\displaystyle{\sum_{j=1}^{n-1} \frac{|u_j (W_{n,k})^*a_k|^2}{\lambda_{j}(W_{n,k})-z}}$$

and evaluate
\begin{equation}
\begin{split}
Y_k- \mathbf{E}(Y_k|W_{n,k})&= Y_k- \displaystyle{(1-\frac{1}{n}) {} s_{n,k}(z)}+o(1)\\
&= \displaystyle{\sum_{j=1}^{n-1} \frac{|u_j (W_{n,k})^*a_k|^2- \frac{1}{n}}{\lambda_{j}(W_{n,k})-z}}+o(1)\\
&=  \displaystyle{\sum_{j=1}^{n-1} \frac{R_j}{\lambda_{j}(W_{n,k})-z}}+o(1),
\end{split}
\end{equation}

where we denote $R_j=\displaystyle |u_j (W_{n,k})^*a_k|^2- \frac{1}{n}$, $\{u_j (W_{n,k})\}$ are orthonormal eigenvectors of $W_{n,k}$.

\medskip
Let $J \subset \{1,\ldots, n-1\}$, then  

$$\displaystyle\sum_{j\in J} R_j =||P_H(a_k)||^2- \frac{\text{dim}(H)}{n}$$

where $H$ is the space spanned by $\{u_j (W_{n,k})\}$ for $j \in J$ and $P_H$ is the orthogonal projection onto $H$.

In Lemma \ref{ConcenLem}, by taking $t=h(n)\sqrt{\log n}$, where $h(n)=\log^{0.001} g(n) $, one can conclude with overwhelming probability 

\begin{equation} \label{eq:1.9}
|\displaystyle\sum_{j\in J} R_j| \ll \frac{1}{n}\left(\frac{h(n)\sqrt{|J|\log n}}{\sqrt{p}}+\frac{h(n)^2 \log n}{p}\right).
\end{equation}

Using the triangle inequality, 

\begin{equation} \label{eq:1.10}
\displaystyle\sum_{j\in J} |R_j| \ll \frac{1}{n}\left(|J| +\frac{h(n)^2 \log n}{p} \right)
\end{equation}

with overwhelming probability.

\medskip
Let $z=x+\sqrt{-1} {} \eta$, where $\eta =\log^2 g(n) \log n/{np}$ and $|x| \le 2 -\varepsilon$, define two parameters 
$$\alpha = \frac{1}{\log^{4/3} g(n)} ~~~~~~~\text{and}~~~~~~~ \beta=\frac{1}{\log^{1/3} g(n) }.$$

\medskip
First, for those $j \in J$ such that $|\lambda_j(W_{n,k})-x| \le \beta \eta $, the function $\frac{1}{\lambda_j(W_{n,k})-x -\sqrt{-1}{} \eta}$ has magnitude $O(\frac{1}{{} \eta})$. From Lemma \ref{Lem1}, $|J| \ll n\beta \eta$, and so the contribution for these $j \in J$ is,

$$\displaystyle{|\sum_{j \in J}^{} \frac{R_j}{\lambda_{j}(W_{n,k})-z}|} \ll \frac{1}{n\eta} \left( n\beta \eta +\frac{h(n)^2 }{\log^2 g(n)} \right) = O(\frac{1}{ \log^{1/3}g(n) })=o(1).$$

\medskip
For the contribution of the remaining $j\in J$, we subdivide the indices as 

$$a \le |\lambda_j(W_{n,k})-x| \le (1+\alpha)a$$

where $a=(1+\alpha)^l \beta \eta$, for $0 \le l \le L$, and then sum over $l$.

\medskip
For each such interval, the function $\frac{1}{\lambda_j(W_{n,k})-x -\sqrt{-1}{} \eta}$ has magnitude $O(\frac{1}{a})$ and fluctuates by at most $O(\frac{\alpha}{a})$. Say $J$ is the set of all $j$'s in this interval, thus by Lemma \ref{Lem1}, $|J| =O(  n\alpha a)$. Together with bounds (\ref{eq:1.9}), (\ref{eq:1.10}), the contribution for these $j$ on such an interval,

\begin{equation*}
\begin{split}
\displaystyle{|\sum_{j \in J}^{} \frac{R_j}{\lambda_{j}(W_{n,k})-z}|} &\ll \frac{1}{an} \left(\frac{h(n)\sqrt{|J|\log n}}{\sqrt{p}}+\frac{h(n)^2 \log n}{p} \right)+  \frac{\alpha}{an}  \left(|J| +\frac{h(n)^2 \log n}{p} \right)\\
&=O\left(  \frac{ \sqrt{\alpha} }{ \sqrt{(1+\alpha)^l} } \frac{ h(n) }{ \sqrt{\beta}\log g(n) }   + \frac{h^2(n)}{ (1+\alpha)^l \beta \log^2 g(n) } +\alpha^2    \right)\\
&= O\left( \frac{1}{ \sqrt{\alpha\beta} } \frac{h(n)}{ \log g(n) }  + \alpha \log \frac{1}{\beta \eta} \right)
\end{split}
\end{equation*}

\medskip
Summing over $l$ and noticing that $(1+\alpha)^{L} \eta/g(n)^{1/4} \le 3$, we get 
\begin{equation*}
\begin{split}
\displaystyle{|\sum_{j \in J, \text{all} J}^{} \frac{R_j}{\lambda_{j}(W_{n,k})-z}|} &= O\left( \frac{1}{\sqrt{\alpha \beta}} \frac{h(n)}{\log g(n)} + \alpha \log\frac{1}{\beta \eta} \right)\\
&=O\left(  \frac{h(n)}{\log^{1/6} g(n)} \right) =o(1).
\end{split}
\end{equation*}
\hfill $\Box$

\end{appendices}

{\bf\Large Acknowledgement.} The authors thank Terence Tao for useful conversations.

%%%%%%%%%%%bibliography

\bibliographystyle{plain}
\bibliography{sparse-bulk}

\end{document}